\newcommand{\U}{\mathcal{U}}
\newcommand{\M}{\mathcal{M}}
\renewcommand{\L}{\mathcal{L}}
\renewcommand{\d}{\mathfrak{d}_{\textsc{J}_1}}
\newcommand{\N}{\mathbb{N}}
\newcommand{\E}{\mathcal{E}}
\renewcommand{\SS}{\mathbb{S}}
\newcommand{\C}{\mathcal{C}}
\newcommand{\D}{\mathcal{D}}
\newcommand{\R}{\mathbb{R}}
\newcommand{\s}{\mathcal{s}}
\newcommand{\I}{1{\hskip -2.5 pt}\hbox{I}}
\newcommand{\II}{\mathbf{I}_\phi}
\newcommand{\X}{\mathcal{S}}
\theoremstyle{plain}
\newtheorem{Thm}{Theorem}[section]
\newtheorem{Lem}[Thm]{Lemma}
\newtheorem{Prop}[Thm]{Proposition}
\newtheorem*{Prop*}{Proposition}
\newtheorem{Cor}[Thm]{Corollary}
\theoremstyle{definition}
\newtheorem{Def}[Thm]{Definition}
\newtheorem{Eg}[Thm]{Example}
\newtheorem{assumption}[Thm]{Assumption}
\theoremstyle{remark}
\newtheorem{Rem}[Thm]{Remark}
\def \follmer {F\"{o}llmer}
\def \ito {It\^{o} }
\def \cadlag {c\`adl\`ag }
\def \caglad {c\`agl\`ad }
\def \closed {generic} %closed under operations
\begin{document}

\title{Causal functional calculus}

%\title{Functional calculus for causal functionals of \cadlag paths}

%\author{Henry Chiu\footnote{
%h.chiu16@imperial.ac.uk; Department of Mathematics, Imperial College London. 
%Supported by XYZ under project ref. XYZ.
%}}

\author{Henry CHIU \footnote{Dept of Mathematics, Imperial College London.
{\tt    h.chiu16@imperial.ac.uk}}
  \and %% remove this line and below if single author
  Rama CONT\footnote{Mathematical Institute, University of Oxford.
{\tt  Rama.Cont@maths.ox.ac.uk} %\url{http://rama.cont.perso.math.cnrs.fr}
  }}%AUTHORS

\date{July 2022}
%Submitted to: \\Transactions of the London Mathematical ical Society\\
%Paper ID: 210525-Cont}

\maketitle

\begin{abstract}
We construct a new topology on the space of stopped paths and introduce a calculus for causal functionals on generic domains of this space. We propose a generic approach to pathwise integration without any assumption on the variation index of a path and obtain functional change of variable formulas which extend the results of \follmer\ (1981) and Cont \& Fourni\'e (2010) to a larger class of functionals, including \follmer's pathwise integrals. We show that a class of smooth functionals possess a pathwise analogue of the martingale property. For paths that possess finite quadratic variation, our approach extends F\"ollmer-Ito calculus and removes previous restriction on the time partition sequence. We introduce a foliation structure on this path space and show that harmonic functionals may be represented as pathwise integrals of closed 1-forms.
\newline
\newline{MSC 2010: 26E15, 60H99}
\end{abstract}

\newpage
\tableofcontents
\clearpage

\section{Introduction}\label{sec:intro} 
\subsection{Motivation}
	Let $\pi:=(\pi_n)_{n\geq 1}$ be a sequence of interval partitions of $[0,\infty)$ and denote $Q^{\pi}$ the set of \cadlag paths with finite quadratic variation along $\pi$ in the sense of \follmer\  \cite{HF}. Then for any $f\in C^2(\R^d)$, the \ito formula holds pathwise along any path $x\in Q^{\pi}$ \cite{HF}:
	\begin{alignat}{2}	f(x(T))&=f(x(0))+ 	\int_{0}^{T}\nabla f(x(t-))dx(t)+ \frac{1}{2}\int_{0}^{T} \nabla^2 f(x(t)).d[x]^{c}(t)\\
	&+ \sum_{0\leq s \leq t} \Delta f(x(s))- \nabla f(x(s-)).\Delta x(s)\nonumber
	\end{alignat}
	where the second term  $\int_{0}^{T}\nabla f(x(t-))dx(t)$ is a  "\follmer\  integral", defined as a pointwise limit of left Riemann sums:\begin{eqnarray}\label{eq.pathwise}
		\int_{0}^{T}\nabla f(x(t-)).dx(t):=\lim_{n\to\infty}\sum_{\pi_n\ni t_{i}\leq T}\nabla f(x(t_{i}))(x(t_{i+1})-x(t_{i})),
		\end{eqnarray} without  resorting to any probabilistic notion of convergence. 
		Based on the key observation that, for any semi-martingale $X$, there exists a sequence of partitions $\pi$ such that the sample paths of $X$ lie almost surely in $Q^{\pi}$,
\follmer\  showed \cite{HF} that for any integrand of the form $\nabla f\circ X$, where $f\in C^2(\R^d)$, the pathwise integral \eqref{eq.pathwise} coincides with probability one with the \ito integral, thus providing a pathwise interpretation of the \ito stochastic integral. 

    The extension of this result to path-dependent functionals has been the focus of several recent works \cite{ananova,CF,CP2019,mania2020}.
    In particular, a  change of variable formula for for a class of regular functionals  of \cadlag paths was obtained in  \cite[Thm. 4]{CF}.
    Moreover, \cite{CF}  (see also \cite[Thm 3.2]{ananova}) establishes that, for $F\in \mathbb{C}^{1,2}(\Lambda_T)$,  one may  define a pathwise integral $\int_{0}^{T}\nabla_{x} F(t,x_{t-}).d^\pi x$  as a pointwise limit of Riemann sums as in \eqref{eq.pathwise}.
    
    The key idea  behind these results \cite{RC,CF09,CF}  can be summarised as follows \cite{RC}. First, one constructs a calculus for continuous functionals  on piecewise constant paths. Second, this calculus is extended to all \cadlag paths   using a density argument, using  piecewise-constant approximations  of paths. This second step is where topology plays a role. 
	    The original construction of the functional \ito calculus was based on the uniform topology \cite{CF09,CF,BD}. As is well known, piece-wise constant approximation of a \cadlag path under the uniform topology requires \emph{exact} knowledge of all points of discontinuity, which leads to a requirement   \cite[Rem.7]{CF} that the sequence of partitions exhausts the set $J(x)$ of discontinuity points of the path $x$:
    \begin{eqnarray} J(x):= \{ t\in [0,\infty), \quad x(t-)\neq x(t)\}\subset\liminf_{n}\pi_n.\label{eq:assumption}   \end{eqnarray}  
    This condition, which links the partition with the path,   is not required  for \follmer's \cite{HF} results, but plays a key role in the proof of \cite[Thm. 4]{CF}.

The following result, whose proof is given in Section \S~\ref{sec:append}, shows that this condition \eqref{eq:assumption} is restrictive and need not be satisfied, even for semimartingales:  
\begin{Prop}\label{prop:invalid} There exists a semi-martingale $X$ such that for {\it any} partition sequence $\pi$, 
$\mathbb{P}(J(X)\subset\liminf_{n}\pi_n)=0$.\end{Prop}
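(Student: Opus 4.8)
The plan is to exhibit a single, explicit semimartingale whose jump times are almost surely misaligned with every deterministic grid, and the natural candidate is a standard Poisson process $X$ on $[0,\infty)$ (any compound Poisson process with strictly positive jump rate works equally well). The decisive structural observation is that a partition sequence $\pi=(\pi_n)_{n\geq 1}$ is a fixed, \emph{deterministic} object, chosen with no reference to the path. Consequently the set
\[
D:=\liminf_{n}\pi_n=\bigcup_{N\geq 1}\bigcap_{n\geq N}\pi_n
\]
is a deterministic subset of $[0,\infty)$, and since each interval partition $\pi_n$ is countable we have $D\subseteq\bigcup_n\pi_n$, so $D$ is countable.

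Next I would reduce the event $\{J(X)\subset D\}$ to a statement about a single, well-understood random time. Let $T_1$ denote the first jump time of $X$. On the event $\{J(X)\subset D\}$ we must in particular have $T_1\in D$, hence $\mathbb{P}(J(X)\subset D)\leq\mathbb{P}(T_1\in D)$. The final step is atomlessness: $T_1$ has an exponential law, which is absolutely continuous and therefore carries no atoms, so for the countable set $D$ one obtains
\[
\mathbb{P}(T_1\in D)=\sum_{d\in D}\mathbb{P}(T_1=d)=0.
\]
Combining the two inequalities yields $\mathbb{P}(J(X)\subset\liminf_n\pi_n)=0$ for every deterministic partition sequence $\pi$, which is exactly the assertion of the proposition.

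I do not expect any genuine technical obstacle; the proof is short, and its only delicate point is the order of quantifiers. The proposition fixes $X$ once and then claims the conclusion holds for \emph{every} $\pi$, and this is possible precisely because $D$ is deterministic and independent of the realised path. I would therefore emphasise in the write-up that ``partition sequence'' is understood in the deterministic sense standard in \follmer\ calculus, so that the atomlessness argument applies uniformly over the path-independent countable set $D$. Were $\pi$ instead permitted to be chosen as a (random) functional of the path, one could trivially arrange $J(X)\subset\liminf_n\pi_n$ and the statement would fail; making this reading explicit is the main thing to get right.
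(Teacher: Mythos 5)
Your proof is correct and rests on exactly the same mechanism as the paper's: a semimartingale whose jump time has an atomless law almost surely avoids the countable deterministic set $\liminf_n\pi_n$ (the paper uses single-jump paths $\I_{[\alpha,\infty)}$ with $\alpha$ drawn from a normalized Lebesgue measure, while you use the exponentially distributed first jump of a Poisson process). The substitution of example is immaterial; no gap.
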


%	In light of Prop.~\ref{prop:invalid}, the pathwise interpretation (\ref{eq:pathwise}) of the  stochastic integral, may fail to hold in the \cadlag case.

	A related issue is  the  differentiability and regularity of the pathwise integral. 
The \follmer\  integral $\mathbb{I}:(t,x)\mapsto \int_{0}^{t}\nabla_{x}F.d^{\pi}x$, which is a central object in the pathwise \ito calculus,  is not continuously  differentiable in the sense of \cite{CF}, even for $F\in \mathbb{C}^{1,2}(\Lambda_T)$.

To address these issues one needs to replace the uniform topology with another topology. %on the space $D$ of \cadlag paths. 
Unfortunately, the usual topologies on the Skorokhod space $D$ \cite[s5]{AJ} do not fit this purpose. For example the pointwise evaluation map\begin{alignat*}{1}F(x):=x(t)\end{alignat*}is not J$_1$ continuous on $D$ \cite[VI. 2.3]{JS} and the same applies to all weaker topologies. It may thus be a lost cause to obtain a functional calculus built on top of weak topologies on $D$.
 
  In this work we circumvent these obstacles by introducing a new topology on the space $D$ of \cadlag paths.
  The F\"ollmer pathwise integral and the pathwise quadratic variation functional are shown to be  continuous functionals with respect to this topology.
  We define a class   of continuously differentiable functionals with respect to this topology and derive  change of variable formulas for such functionals without requiring  the
restrictive  condition \eqref{eq:assumption}.
In the case of paths with finite quadratic variation along a partition sequence, our change of variable formula extends  results \cite{ananova,CF,HF,hirai} on the F\"ollmer-Ito calculus and relaxes previous assumptions relating the partition sequence to the discontinuities of the underlying path.
In particular we obtain a pathwise identity of \ito (Theorem \ref{thm:isometry}) in the spirit of Beiglb\"ock and Siorpaes'  pathwise Burkholder-Davis-Gundy inequality \cite{PS}. %, a formula of which, one may resort in order to extend the pathwise integral under (possibly a collection of) positive linear functional(s). 

Pathwise integration concepts and It\^o-type change of variable formulas have been obtained by Cont \& Perkowski \cite{CP2019} using an extension of \follmer's ideas to paths with p-th order variation and by Friz \& Zhang \cite{frizzhang} using rough path theory. 
In contrast to these results, we define pathwise integrals as limits of (left-)Riemann sums, which naturally appear in applications, not compensated Riemann sums, and we are able to treat a greater class of functionals, notably including \follmer\  integrals.

%		In addition, we show that class $\M$ are canonical solutions to the path-dependent heat equations.
%\subsection{Relation with previous work}\cite{chevyrev2019}

\subsection{Outline}

	After introducing some  definitions and notations in Section \ref{sec:preliminaries} we prove, in section~\ref{sec:qv},   a new limit theorem  which is useful for studying functionals   involving quadratic variation. In section~\ref{sec:pi}, we introduce a new topology the space of \cadlag paths, discuss its  relation with other well-known topologies and give examples of continuous functionals for this topology. 
In section~\ref{sec:c1k}, we introduce classes of smooth causal functionals and discuss their properties. In particular, we introduce a class of functionals which are shown to satisfy a pathwise analogue of the martingale property (Theorem \ref{thm:fair}).

Section~\ref{sec:ito} discusses pathwise integration and functional change of variable formulas. We show in particular that pathwise integrals may be defined for class $\M$ functionals without any condition on the variation index ($p$-variation) of the underlying path.
Section~\ref{sec:ftc} discusses in more detail the case of functionals of \cadlag paths with finite quadratic variation and the relation of class $\M$ functionals to a class of path-dependent partial differential equations.

\section{Preliminaries}\label{sec:preliminaries}
	\subsection{Notations}\noindent

    Denote by $D_{m}$  the Skorokhod space of $\R^{m}$-valued \cadlag functions \begin{eqnarray*}t\longmapsto x(t):=(x_1(t),\ldots,x_{m}(t))'\end{eqnarray*} on $\R_{+}:=[0,\infty)$. Denote $\SS_m$ (resp. $BV_m$) the subset of step functions (resp. locally bounded variation functions) in $D_{m}$. For $m=1$, we will omit the subscript $m$. By convention, $x(0-):=x(0)$ and $\Delta x(t):=x(t)-x(t-)$. we denote by by $x_t\in D_{m}$ (resp. $x_{t-}\in D_{m}$) the path $x\in D_{m}$ stopped at $t$ (resp. $t-$):
    $$ x_t(s) =x(s\wedge t),\qquad x_{t-}(s) =x(s) 1_{s<t} + x(t-) x(s) 1_{s\geq t}.$$
   We equip $(D_{m},\d)$    with a   metric $\d$ which induces the Skorokhod (a.k.a. \textsc{J}$_1)$ topology. 

		Let $\pi:=(\pi_n)_{n\geq 1}$ be a fixed sequence of partitions $\pi_n=(t^{n}_0,...,t^{n}_{k_n})$ of $[0,\infty)$ into intervals $0=t^{n}_0<...<t^{n}_{k_n}<\infty$ such that $t^{n}_{k_n}\to\infty$, with vanishing mesh $|\pi_n|=\sup_{i=1..k_n} |t^n_{i}-t^n_{i-1}|\to 0$ on compacts. By convention, $\max(\emptyset\cap\pi_n):=0$, $\min(\emptyset\cap\pi_n):=t^{n}_{k_n}$.  
		
		We denote \begin{eqnarray}t'_n:=\max\{t_i<t|t_i\in\pi_n\},\qquad
x^{n}&:=&\sum_{t_i\in\pi_n}x(t_{i+1})\I_{[t_i,t_{i+1})} \label{eq.tprime}
\end{eqnarray}and by $x^{(n)}$ the (continuous) piecewise-linear approximations of $x$ along $\pi_n$. 

		We denote $Q^{\pi}_m\subset D_m$ the subset of \cadlag paths with finite quadratic variation along $\pi$, defined as follows: 
\begin{Def}[Quadratic variation along a sequence of partitions]\label{def:qv}
We say that $x\in D_m$ has finite quadratic variation along $\pi$  if the  sequence of step functions:\begin{eqnarray*}
q_n(t):=\sum_{\pi_n\ni t_i\leq t}(x({t_{i+1}})-x({t_{i}}))(x({t_{i+1}})-x({t_{i}}))'
\end{eqnarray*}converges in the Skorokhod topology. The limit $[x]_\pi:=\left([x_{i},x_{j}]_\pi\right)_{1\leq i, j\leq m}\in D_{m\times m}$ is called the quadratic variation of $x$ along $\pi$.\end{Def}
In the sequel, we shall fix such a sequence of partitions $\pi$ and drop the subscript $\pi$ unless we want to emphasize the dependence on $\pi$.

As shown in \cite[Thm. 3.6]{CC}, Definition \ref{def:qv} is equivalent to the one given by \follmer \cite{HF}:
\begin{Prop}[\cite{CC}]\label{prop:qv}
Let $x\in D_m$, then $x\in Q^{\pi}_{m}$ if and only if $x_i, x_i+x_j \in Q^{\pi}$. If $x\in Q^{\pi}_m$, then we have the polarisation identity \begin{alignat}{2}
[x_{i},x_{j}](t)&=\frac{1}{2}\left([x_{i}+x_{j}]-[x_{i}]-[x_{j}]\right)(t)\in BV\nonumber\\
&=[x_{i},x_{j}]^{c}(t)+\sum_{s\leq t}\Delta x_i(s) \Delta x_j(s)\label{eq:qv}
\end{alignat}
\end{Prop}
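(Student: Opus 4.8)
The plan is to prove both the equivalence and the identity by translating between J$_1$ convergence of the monotone step functions $q_n$ and vague convergence of their increment measures; this detour is what neutralises the well-known failure of addition to be J$_1$-continuous. Throughout write $\Delta_i x := x(t_{i+1})-x(t_i)$ and, for a scalar path $y$, let $q_n^{(y)}$ denote the step function of Definition~\ref{def:qv}.

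\emph{Forward implication.} Suppose $x\in Q^\pi_m$, so the matrix-valued step functions $q_n$ converge in $D_{m\times m}$ to $[x]$. Since coordinate projections $D_{m\times m}\to D$ are continuous for the J$_1$ topology, each entry $(q_n)_{ij}$ converges to $[x_i,x_j]:=[x]_{ij}$; in particular the diagonal gives $x_i\in Q^\pi$. For the sums I would use the algebraic identity $q_n^{(x_i+x_j)}=(q_n)_{ii}+2(q_n)_{ij}+(q_n)_{jj}$. Because the triple $((q_n)_{ii},(q_n)_{ij},(q_n)_{jj})$ converges \emph{jointly} (as the image of $q_n$ under a continuous projection onto three coordinates), composing with the continuous map $(a,b,c)\mapsto a+2b+c$ preserves J$_1$ convergence, so $q_n^{(x_i+x_j)}$ converges and $x_i+x_j\in Q^\pi$.

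\emph{Reverse implication.} This is the crux: separate J$_1$ convergence of $q_n^{(x_i)}$, $q_n^{(x_j)}$ and $q_n^{(x_i+x_j)}$ does \emph{not} directly yield convergence of $(q_n)_{ij}=\tfrac12\bigl(q_n^{(x_i+x_j)}-q_n^{(x_i)}-q_n^{(x_j)}\bigr)$, precisely because addition is not J$_1$-continuous. I would resolve this by passing to the increment measures $\mu_n:=dq_n=\sum_{t_i\in\pi_n}(\Delta_i x)(\Delta_i x)'\,\delta_{t_{i+1}}$, which are positive-semidefinite matrix measures, so that $q_n=\mu_n([0,\cdot])$ is nondecreasing in the Loewner order. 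For a nondecreasing scalar function, J$_1$ convergence is equivalent to pointwise convergence at continuity points of the limit, hence to vague convergence of the associated Stieltjes measure; applying this to the increasing functions $v'q_n v$ for $v\in\{e_i\}\cup\{e_i+e_j\}$ gives vague convergence of the positive measures $v'\mu_n v$. Since the quadratic forms $A\mapsto v'Av$ over this finite set of $v$ determine any symmetric matrix $A$, this upgrades to vague convergence of the full matrix measure $\mu_n\to\mu$, and the positive-semidefinite monotone structure then returns J$_1$ convergence $q_n\to\mu([0,\cdot])=:[x]$ in $D_{m\times m}$, i.e.\ $x\in Q^\pi_m$.

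\emph{Polarisation and jump decomposition.} Passing to the limit in $(q_n)_{ij}=\tfrac12\bigl(q_n^{(x_i+x_j)}-q_n^{(x_i)}-q_n^{(x_j)}\bigr)$ yields $[x_i,x_j]=\tfrac12\bigl([x_i+x_j]-[x_i]-[x_j]\bigr)$, which lies in $BV$ as a difference of nondecreasing functions. By polarisation it then suffices to treat the diagonal: at a jump time $s$ of $x$ the partition interval containing $s$ has endpoints converging to $s$, so $\Delta_i x\to\Delta x(s)$ and the atom of $\mu_n$ near $s$ converges to $(\Delta x(s))(\Delta x(s))'$; together with the summability $\sum_{s\le t}|\Delta x(s)|^2\le\operatorname{tr}[x](t)<\infty$ (as in \cite{HF}) this identifies the atomic part of $[x]$ with $\sum_{s\le\cdot}(\Delta x(s))(\Delta x(s))'$ and leaves a continuous remainder $[x]^c$, giving $\Delta[x_i,x_j](s)=\Delta x_i(s)\Delta x_j(s)$ and the stated decomposition. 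The main obstacle throughout is the non-linearity of the J$_1$ topology under addition; the key technical input that removes it is the equivalence, for Loewner-monotone matrix functions, between J$_1$ convergence and vague convergence of the associated matrix measure, with the identification of the limiting atoms as squared jumps of $x$ being the remaining delicate verification.
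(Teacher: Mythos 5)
The paper does not actually prove this proposition itself (it is quoted from \cite[Thm. 3.6]{CC}), so your argument has to stand on its own. Its forward half does: joint J$_1$ convergence of the entries of $q_n$, as components of a single converging sequence in $D_{m\times m}$, survives composition with the continuous map $(a,b,c)\mapsto a+2b+c$, and this is the correct way around the failure of addition to be J$_1$-continuous.

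The gap is in the reverse implication, and it sits exactly where you place your ``key technical input'': the claimed equivalence, for nondecreasing (or Loewner-monotone) functions, between J$_1$ convergence and vague convergence of the associated Stieltjes measures is false. Vague convergence amounts to pointwise convergence at continuity points of the limit, which does not imply J$_1$ convergence even for monotone step functions: $f_n=\I_{[1/2-1/n,\infty)}+\I_{[1/2+1/n,\infty)}$ converges vaguely (two unit atoms coalescing into one atom of mass $2$) but not in J$_1$ to $2\,\I_{[1/2,\infty)}$, since J$_1$ forbids the merging of jumps. Hence your chain ``J$_1$ of $v'q_nv$ $\Rightarrow$ vague of $v'\mu_nv$ $\Rightarrow$ vague of $\mu_n$ $\Rightarrow$ J$_1$ of $q_n$'' breaks at the last arrow, and the same issue undercuts your jump identification: exhibiting one straddling atom with $(x(t^n_{k+1})-x(t^n_k))(x(t^n_{k+1})-x(t^n_k))'\to\Delta x(s)\Delta x(s)'$ does not yet show that $\Delta[x](s)$ equals this quantity, because under mere vague convergence other nearby atoms could pile up at $s$. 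What actually closes the argument is the J$_1$-specific jump analysis: by \cite[VI.2.1]{JS}, J$_1$ convergence of each scalar $q_n^{(y)}$ furnishes, at every jump time of the limit, an asymptotically unique jump-carrying sequence of partition points; since an increment $y(t^n_{k+1})-y(t^n_k)$ can have a nonzero limit only when $(t^n_k,t^n_{k+1}]$ straddles a fixed jump time $s$ of $y$ (right continuity and left limits kill every other configuration), this forces $\Delta[y](s)=(\Delta y(s))^2$ and shows that the jump-carrying sequences for $x_i$, $x_j$ and $x_i+x_j$ all coincide with the same straddling partition points. That common alignment is what allows a single time change for all entries and hence J$_1$ convergence of the matrix $q_n$; componentwise J$_1$ convergence alone would not give joint convergence. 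Your outline names the right objects but substitutes for this step an equivalence that does not hold.
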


We set $\lim_{n}a_n:=\infty$ whenever a real sequence $(a_n)$ does not converge. For real-valued matrices of equal dimension, we write $\langle\cdot,\cdot\rangle$ to denote the Frobenius inner product and $|\cdot|$ to denote the Frobenius norm. If $f$ (resp. $g$) are  $\R^{m\times m}$-valued functions on $[0,\infty)$, we write\begin{eqnarray}
		\int_{0}^{t} fdg:=\sum_{i,j}\int_{0}^{t}f_{i,j}(s-)dg_{i,j}(s)
		\end{eqnarray}whenever the RHS makes sense. If $x\in Q^{\pi}_m$ and $f\in C^2(\R^{m})$, we write \begin{eqnarray*}
		\int_{0}^{t}(\nabla f\circ x) d^\pi x:=\int_{0}^{t}\nabla f(x(s-)) d^\pi x(s)\end{eqnarray*}to denote the \follmer\  integral \cite{HF}, defined as a pointwise limit of left Riemann sums along $\pi$. The superscript $\pi$ may be dropped in the sequel as $\pi$ is fixed throughout.
		%For a function $F$ defined on a domain $\Lambda$, the co-domain of $F$ will always be $\R^{k,l}$ for some $k,l$.

\subsection{Quadratic Riemann sums}\label{sec:qv}\noindent 
	In this section, we focus on paths with finite quadratic variation along a sequence of partitions and  extend certain limit theorems obtained in  	\cite{CF}  for the convergence of 'quadratic Riemann sums' (in particular \cite[Lemma 12]{CF}) to a more general setting.
	The main result of this section is Theorem~\ref{thm:qs}, which is a key ingredient in the proof of change of variable formula for functionals of paths with quadratic variation.
	
%	\cite[Lemma12]{CF} (atomless measure) to Lemma~\ref{lem:vague3} (measure with atoms). The goal is to obtain Thm.~\ref{thm:qs}, a general limit theorem to treat functional of quadratic variation.

 \noindent
The following result \cite[Lemma 2.2]{CC} will be useful in the sequel: 
\begin{Lem}\label{lem:vague1}
Let $v_n, v$ be non-negative Radon measures on $\R_{+}$ and $J$ be the set of atoms of $v$. Then $v_n\rightarrow v$ vaguely on $\R_{+}$ if and only if $v_n\rightarrow v$ weakly on $[0,T]$ for every $T\notin J$.\end{Lem}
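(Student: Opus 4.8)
The plan is to prove the two implications separately, keeping in mind that vague convergence on $\R_{+}$ tests against $f\in C_c(\R_{+})$ (continuous, compactly supported), whereas weak convergence on the compact interval $[0,T]$ tests against all $f\in C([0,T])$ and, in particular, forces the masses $v_n([0,T])$ to converge. The only feature distinguishing $[0,T]$ from a compactly-supported test problem is its right endpoint $T$: mass that $v_n$ places just to the right of $T$ is invisible to vague convergence but can contribute to $\lim_n v_n([0,T])$. The hypothesis $T\notin J$, i.e. $v(\{T\})=0$, is precisely what rules out such boundary leakage, through continuity from above of $v$ at $T$.

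For the easy direction (weak $\Rightarrow$ vague), I would take $f\in C_c(\R_{+})$ with $\operatorname{supp} f\subset[0,T_0]$. Since $v$ is Radon, $J$ is at most countable, so I may pick $T>T_0$ with $T\notin J$. Then $f|_{[0,T]}\in C([0,T])$ and $\int_{\R_{+}}f\,dv_n=\int_{[0,T]}f\,dv_n$, and likewise for $v$; weak convergence on $[0,T]$ then yields the claim immediately.

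For the substantive direction (vague $\Rightarrow$ weak), fix $T\notin J$ and $f\in C([0,T])$, and extend $f$ to $\tilde f\in C_c(\R_{+})$ (Tietze extension followed by a compactly supported cutoff). For $\delta>0$ introduce a continuous cutoff $\chi_\delta\in C_c(\R_{+})$ with $\chi_\delta\equiv1$ on $[0,T]$, $\chi_\delta\equiv0$ on $[T+\delta,\infty)$ and $0\le\chi_\delta\le1$, so that $\tilde f\chi_\delta\in C_c(\R_{+})$ and $\int\tilde f\chi_\delta\,d\mu=\int_{[0,T]}f\,d\mu+\int_{(T,T+\delta)}\tilde f\chi_\delta\,d\mu$ for $\mu\in\{v_n,v\}$. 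Subtracting the two identities, the $C_c$-part $\int\tilde f\chi_\delta\,dv_n-\int\tilde f\chi_\delta\,dv\to0$ by vague convergence, while the two remainder integrals are bounded by $\|\tilde f\|_\infty\,v_n((T,T+\delta))$ and $\|\tilde f\|_\infty\,v((T,T+\delta))$. To control the first remainder uniformly in $n$, I would dominate $\I_{(T,T+\delta)}$ by a continuous $g_\delta\in C_c(\R_{+})$ supported on $[T-\delta,T+2\delta]$ with $g_\delta\equiv1$ on $[T,T+\delta]$; vague convergence then gives $\limsup_n v_n((T,T+\delta))\le\int g_\delta\,dv\le v([T-\delta,T+2\delta])$. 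Letting $\delta\downarrow0$, continuity from above yields $v([T-\delta,T+2\delta])\to v(\{T\})=0$ and $v((T,T+\delta))\to0$, so $\limsup_n\bigl|\int_{[0,T]}f\,dv_n-\int_{[0,T]}f\,dv\bigr|$ can be made arbitrarily small, which is weak convergence on $[0,T]$. (Taking $f\equiv1$ recovers the mass convergence $v_n([0,T])\to v([0,T])$ as a special case.)

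The main obstacle is exactly this boundary control in the second implication: vague convergence says nothing about how $v_n$ distributes mass across the threshold $T$, and without $v(\{T\})=0$ the masses $v_n([0,T])$ need not converge to $v([0,T])$, since mass could concentrate at $T^{+}$ in the limit. The crux is therefore the sandwiching of $\I_{(T,T+\delta)}$ between compactly supported continuous functions combined with continuity from above of the limit measure at the atom-free point $T$; everything else is a routine $\varepsilon$-$\delta$ estimate.
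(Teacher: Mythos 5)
Your argument is correct. Note that the paper does not prove this lemma at all: it is imported verbatim from \cite[Lemma 2.2]{CC}, so there is no in-paper proof to compare against. Your two-sided treatment --- the countability of $J$ for the easy direction, and for the converse the sandwich of $\I_{(T,T+\delta)}$ between compactly supported continuous functions combined with continuity from above of $v$ at the atom-free point $T$ --- is the standard portmanteau-style argument for this equivalence, and all the estimates (in particular the order of the limits $n\to\infty$ then $\delta\downarrow 0$) are handled correctly.
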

 
\begin{Lem}\label{lem:d[x]}
Let $x\in Q^{\pi}$, $\mu=d[x]$ be the Radon measure associated with $[x]$. For every $[0,T]$, $T_n:=\max\{t_i<T|t_i\in\pi_n\}$, $T_{n+1}:=\min\{t_i\geq T|t_i\in\pi_n\}$. Define a sequence of non-negative Radon measures on $\R_{+}$ by\begin{eqnarray*}
\mu_n([0,T])&:=&\sum_{t_{i}\in\pi_n}(x(t_{i+1})-x(t_{i}))^2\delta_{t_{i+1}}([0,T))+(x(T_{n+1})-x(T_{n}))^2.
\end{eqnarray*}Then\begin{itemize}
%\item[(i)] The discrete part of $d[x]$ is given by $\sum_{t\in J}(\Delta x(t))^2 \delta_{t}$,
\item[(i)] $\xi_n:=\sum_{t_i\in\pi_n}(x(t_{i+1})-x(t_{i}))^2\delta_{t_{i}}\longrightarrow \mu $ vaguely on $\R_{+}$,
\item[(ii)] $\mu_n\longrightarrow \mu $ vaguely on $\R_{+}$.
\end{itemize}
\end{Lem}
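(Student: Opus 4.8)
The plan is to reduce both statements, via Lemma~\ref{lem:vague1}, to weak convergence of finite measures on each compact $[0,T]$ with $T\notin J$, and then to read that weak convergence off from the convergence of distribution functions. The starting observation is that $\xi_n$ is exactly the Lebesgue--Stieltjes measure of the increasing step function $q_n$ of Definition~\ref{def:qv}: since $\delta_{t_i}([0,t])=\I_{t_i\leq t}$, we have $\xi_n([0,t])=\sum_{\pi_n\ni t_i\leq t}(x(t_{i+1})-x(t_i))^2=q_n(t)$, whereas $\mu=d[x]$ has distribution function $\mu([0,t])=[x](t)$, so that its atom set $J$ is precisely the jump set of $[x]$. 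By hypothesis $q_n\to[x]$ in the Skorokhod topology, with $[x]$ non-decreasing.

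The key analytic input I would isolate first is that Skorokhod convergence forces genuine pointwise convergence at continuity points of the limit: if $t\notin J$ then $[x]$ is continuous at $t$, and hence both $q_n(t)\to[x](t)$ and $q_n(t-)\to[x](t)$. (Near a continuity point the time changes realising $J_1$-closeness are asymptotically the identity, so the one-sided values of $q_n$ are squeezed to the common value $[x](t)$.) Granting this, part (i) follows quickly. Fix $T\notin J$. For every continuity point $t\leq T$ we have $\xi_n([0,t])=q_n(t)\to[x](t)=\mu([0,t])$, while the total masses converge, $\xi_n([0,T])=q_n(T)\to[x](T)=\mu([0,T])$, since $T\notin J$. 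By the standard characterisation of weak convergence of finite measures on an interval through their distribution functions (convergence at continuity points of the limit together with convergence of total mass), $\xi_n\to\mu$ weakly on $[0,T]$; as $T\notin J$ was arbitrary, Lemma~\ref{lem:vague1} yields vague convergence on $\R_{+}$.

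For part (ii) I would first unwind the definition of $\mu_n$. The indicator $\delta_{t_{i+1}}([0,T))$ retains exactly the increments whose interval lies strictly to the left of $T$, while the boundary term $(x(T_{n+1})-x(T_{n}))^2$ restores the single straddling interval $[T_n,T_{n+1}]$; together these are precisely the increments over intervals with left endpoint $t_i<T$, so that $\mu_n([0,T])=\sum_{\pi_n\ni t_i<T}(x(t_{i+1})-x(t_i))^2=q_n(T-)$. The remainder is identical to (i): for $T\notin J$ one has $\mu_n([0,t])=q_n(t-)\to[x](t)=\mu([0,t])$ at every continuity point $t\leq T$ and $\mu_n([0,T])=q_n(T-)\to[x](T)=\mu([0,T])$, whence $\mu_n\to\mu$ weakly on $[0,T]$ and, by Lemma~\ref{lem:vague1}, vaguely on $\R_{+}$.

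The main obstacle is the analytic input of the second paragraph rather than the bookkeeping: one must pass from $J_1$-convergence, which a priori only controls $q_n$ up to a vanishing time change, to honest convergence of the one-sided values $q_n(t\pm)$ at continuity points, and check that this is compatible with the distribution-function criterion near the atoms $J$ (where no such control is available, which is exactly why the evaluation points and $T$ are confined to the complement of $J$). Identifying $\mu_n([0,T])$ with $q_n(T-)$ --- that is, verifying that the correction term is engineered to reproduce the left limit of $q_n$ --- is the other point requiring care, but it is a finite computation.
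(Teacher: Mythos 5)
Your proof is correct, but it takes a different route from the paper's, so a comparison is worthwhile. For (i) the paper simply cites an external result (\cite[Thm.~2.7]{CC}); you instead give a self-contained argument by observing that $\xi_n$ is the Stieltjes measure of $q_n$, invoking the standard fact that $J_1$-convergence forces $q_n(t)\to[x](t)$ and $q_n(t-)\to[x](t)$ at continuity points of $[x]$, and then applying the distribution-function criterion for weak convergence of finite measures on $[0,T]$ for each $T\notin J$. For (ii) the paper does \emph{not} re-run a distribution-function argument: it reduces (ii) to (i) by estimating $\bigl|\int_0^T f\,d\xi_n-\int_0^T f\,d\mu_n\bigr|$ directly for continuous $f$, pairing the atom of $\xi_n$ at $t_i$ with the atom of $\mu_n$ at $t_{i+1}\wedge T$ and bounding the difference by $\sup_i|f(t_i)-f(t_{i+1}\wedge T)|\cdot\xi_n([0,T))$ plus a boundary term $\|f\|_T(x(T'_{n+1})-x(T_{n+1}))^2$, both of which vanish by uniform continuity of $f$, the vanishing mesh, and right-continuity of $x$. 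Your identification $\mu_n([0,T])=\sum_{\pi_n\ni t_i<T}(x(t_{i+1})-x(t_i))^2=q_n(T-)$ is the correct unwinding of the boundary correction, and your CDF argument then closes (ii) in the same way as (i); the only point deserving an explicit line is that for intermediate $t<T$ the two possible readings of $\mu_n([0,t])$ (the formula evaluated at $t$ versus the restriction of the level-$T$ measure) differ by a single straddling increment, which tends to $(\Delta x(t))^2=\Delta[x](t)=0$ at continuity points of $[x]$, so the criterion applies either way. What your approach buys is uniformity (one mechanism proves both parts, with no external citation) and a transparent explanation of why the correction term in $\mu_n$ is engineered as it is; what the paper's approach buys is brevity for (ii), since given (i) it only needs an elementary uniform-continuity estimate rather than a second appeal to the weak-convergence machinery.
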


\begin{proof}
(i) follows from \cite[Thm. 2.7]{CC}. By Lemma~\ref{lem:vague1}, we may assume $T$ to be a continuity point of $d[x]$. Let $f$ be a continuous function on $[0,T]$. If $T=0$, then $\mu_n(\{0\})\equiv d[x](\{0\})=0$. If $T>0$, observe that $\xi_n([0,T))\longrightarrow d[x]([0,T))$ (by (i)), $f$ is uniform continuous on $[0,T]$ and that $x$ is right-continuous. Let $T'_{n+1}:=\min\{t_i>T|t_i\in\pi_n\}$, it follows that for sufficiently large $n$\begin{eqnarray*}
\left|\int_{0}^{T}fd\xi_{n}-\int_{0}^{T}fd\mu_{n}\right|&\leq&\sum_{\pi_n\ni t_{i}<T}|f(t_i)-f(t_{i+1}\wedge T)|(x(t_{i+1})-x(t_i))^2\\
&+&f(T)(x(T'_{n+1})-x(T_{n+1}))^2\\
&\leq&\sup_{t_i\in\pi_n\cap[0,T]}|f(t_i)-f(t_{i+1}\wedge T)|\xi_n([0,T))\\
&+&\|f\|_T(x(T'_{n+1})-x(T_{n+1}))^2\longrightarrow 0.
\end{eqnarray*}\end{proof}

\begin{Lem}\label{lem:vague2}
Let $(v_n,n\geq 1)$ be a sequence of non-negative Radon measures on $\R_{+}$ converging vaguely to a Radon measure $v$ and $J$ be the set of atoms of $v$. If for every $T\in J$, there exists a sequence $(T_n)$ in $\R_{+}$, $T_n\uparrow T$ such that\begin{eqnarray}v_n(\{T_n\})\longrightarrow v(\{T\}),\label{eq:vague2}\end{eqnarray}then $v_n\longrightarrow v$ weakly on $[0,T]$ for all $T\geq 0$.\end{Lem}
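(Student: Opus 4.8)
The plan is to reduce the claim, for a fixed $T\ge 0$, to the behaviour of $v_n$ near $T$, handling the two regimes $T\notin J$ and $T\in J$ separately. Since $v$ is a Radon measure its atom set $J$ is at most countable, so the continuity points of $v$ are dense and I may freely choose such points arbitrarily close to $T$ on either side. If $T\notin J$ there is nothing to do: Lemma~\ref{lem:vague1} already yields weak convergence on $[0,T]$ directly from the assumed vague convergence. So the whole content is the case $T\in J$, which I treat next.

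For $T\in J$ the crux is the convergence of total masses $v_n([0,T])\to v([0,T])$, and this is the step where hypothesis \eqref{eq:vague2} is indispensable. The upper bound comes for free from vagueness: picking a continuity point $T''>T$, the monotonicity $v_n([0,T])\le v_n([0,T''])$ together with Lemma~\ref{lem:vague1} gives $\limsup_n v_n([0,T])\le v([0,T''])$, and letting $T''\downarrow T$ through continuity points yields $\limsup_n v_n([0,T])\le v([0,T])$ by continuity from above of $v$. The lower bound is the main obstacle, because vague convergence on its own allows mass to leak just to the right of the atom $T$ (think of $v_n=\delta_{T+1/n}$, which converges vaguely to $\delta_T$ yet loses the mass at $T$ on $[0,T]$); ruling this out is exactly what \eqref{eq:vague2} does. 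Concretely, I would fix a continuity point $T'<T$; since $T_n\uparrow T$, one has $T_n\in(T',T]$ for all large $n$, hence $v_n([0,T])\ge v_n([0,T'])+v_n(\{T_n\})$. Letting $n\to\infty$, using weak convergence on $[0,T']$ from Lemma~\ref{lem:vague1} and the hypothesis $v_n(\{T_n\})\to v(\{T\})$, and then letting $T'\uparrow T$, gives $\liminf_n v_n([0,T])\ge v([0,T))+v(\{T\})=v([0,T])$. Combining the bounds proves total mass convergence, and subtracting the convergence $v_n([0,T'])\to v([0,T'])$ then yields $v_n((T',T])\to v((T',T])$ for every continuity point $T'<T$.

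The final step is a routine $\epsilon$-argument built on this mass control. Given $f\in C([0,T])$ and $\epsilon>0$, uniform continuity lets me choose a continuity point $T'<T$ so close to $T$ that $|f-f(T)|<\epsilon$ on $(T',T]$, and I split $\int_{[0,T]}f\,dv_n=\int_{[0,T']}f\,dv_n+\int_{(T',T]}f\,dv_n$. The first integral converges to $\int_{[0,T']}f\,dv$ by weak convergence on $[0,T']$; in the second I replace $f$ by the constant $f(T)$ at a cost at most $\epsilon\,v_n((T',T])$ and pass to the limit using $v_n((T',T])\to v((T',T])$. Performing the same replacement for the limit measure and collecting the error terms bounds $\limsup_n\bigl|\int_{[0,T]}f\,dv_n-\int_{[0,T]}f\,dv\bigr|$ by a fixed multiple of $\epsilon\,v([0,T])$, which is arbitrary, giving weak convergence on $[0,T]$. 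The degenerate case $T=0\in J$ needs no argument, since there $T_n\equiv0$ and \eqref{eq:vague2} is precisely the desired convergence $v_n(\{0\})\to v(\{0\})$.
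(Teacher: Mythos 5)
Your proof is correct, but it is organised differently from the paper's. The paper subtracts the atom from both sides --- it works with the modified measures $\tilde{v}_n = v_n - v_n(\{T_n\})\delta_{T_n}$ and $\tilde{v} = v - v(\{T\})\delta_T$ --- and then sandwiches $\int_0^T f\,d\tilde{v}_n$ between integrals of two compactly supported continuous extensions $\underline{f}^{\epsilon} \le f\I_{[0,T]} \le \overline{f}^{\epsilon}$, so that vague convergence can be applied directly; the gap $\limsup - \liminf$ is then controlled by $v([T-\epsilon,T+\epsilon]) - v(\{T\})$, which vanishes as $\epsilon \downarrow 0$. You instead first establish convergence of the total masses $v_n([0,T]) \to v([0,T])$ --- the upper bound from vague convergence via continuity points $T'' \downarrow T$, the lower bound from the hypothesis \eqref{eq:vague2} which prevents mass from leaking past the atom (your example $v_n = \delta_{T+1/n}$ is exactly the obstruction) --- and then run a standard splitting argument at a continuity point $T' < T$ where $f$ is within $\epsilon$ of $f(T)$. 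This is the classical portmanteau-style route: vague convergence plus no escape of mass at the right endpoint implies weak convergence on the closed interval. Your version makes the role of \eqref{eq:vague2} more transparent and avoids constructing the auxiliary extensions and the final monotone-convergence step, at the cost of the extra intermediate claim $v_n((T',T]) \to v((T',T])$; both arguments are elementary and rely on Lemma~\ref{lem:vague1} in the same way, and both dispose of the degenerate case $T = 0 \in J$ identically.
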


\begin{proof}
For every $T\geq 0$, $\tilde{v}_n([0,T]):=v_n([0,T])-v_n(\{T_n\})$ and $\tilde{v}([0,T]):=v([0,T])-v(\{T\})$. If $T\notin J$, the claim follows immediately from Lemma~\ref{lem:vague1}. Thus, we may assume $T\in J$. If $T=0\in J$, then $T_n\equiv 0$. Let $T>0$ and $f\in C\left([0,T],\|\cdot\|_{\infty}\right)$. Since $f=(f)^{+}-(f)^{-}$, we may take $f\geq 0$ and for sufficiently small $\epsilon>0$, we define the following extensions:\begin{eqnarray*}\overline{f}^{\epsilon}(t)&:=&f(t)\I_{[0,T]}(t)+f(T)\left(1+\frac{T-t}{\epsilon}\right)\I_{(T,T+\epsilon]}(t)\\
\underline{f}^{\epsilon}(t)&:=&f(t)\I_{[0,T-\epsilon]}(t)+f(T)\left(\frac{T-t}{\epsilon}\right)\I_{(T-\epsilon,T]}(t)
,\end{eqnarray*}then $\overline{f}^{\epsilon}$, $\underline{f}^{\epsilon}\in\C_K([0,\infty))$, $0\leq\underline{f}^{\epsilon}\leq f\I_{[0,T]}\leq\overline{f}^{\epsilon}\leq\|f\|_{\infty}$. and we have \begin{eqnarray*}\int_{0}^{\infty}\underline{f}^{\epsilon}d\tilde{v}_{n}\leq\int_{0}^{T} f d\tilde{v}_{n}\leq\int_{0}^{\infty}\overline{f}^{\epsilon}d\tilde{v}_n.\end{eqnarray*}Since $v_n \rightarrow v$ vaguely and (\ref{eq:vague2}) holds, we obtain \begin{eqnarray*}0&\leq&\limsup_{n}\int_{0}^{T} f d\tilde{v}_{n}-\liminf_{n}\int_{0}^{T} f d\tilde{v}_{n}\leq\int_{0}^{\infty}\overline{f}^{\epsilon}-\underline{f}^{\epsilon}d\tilde{v}\\
&\leq&f(T)\left(v\left([T-\epsilon,T+\epsilon]\right)-v(\{T\})\right)\stackrel{\epsilon}{\longrightarrow}0,\end{eqnarray*}hence by monotone convergence\begin{eqnarray*}\lim_{n}\int_{0}^{T} f d\tilde{v}_{n}=\lim_{\epsilon}\int_{0}^{\infty}\underline{f}^{\epsilon}d\tilde{v}=\int_{0}^{T}fd\tilde{v}.\end{eqnarray*}By (\ref{eq:vague2}), it follows $\lim_{n}\int_{0}^{T} f dv_n=\int_{0}^{T}fdv$.\end{proof}

\begin{Lem}\label{lem:vague3}
Let $(v_n,n\geq 1)$ be a sequence of non-negative Radon measures on $\R_{+}$ converging vaguely to a Radon measure $v$ and $J$ be the set of atoms of $v$. Let $f_n, f$ be real-valued left-continuous functions on $\R_{+}$ and $J$ be the set of atoms of $v$. If\begin{itemize}

\item[(i)]  for every $T\in J$ there exists a sequence $(T_n)\in[0,T)$ with $T_n\uparrow T$ such that $v_n(\{T_n\})\longrightarrow v(\{T\})$, and
\item[(ii)] $(f_n)$ is locally bounded and converges pointwise to $f$,\end{itemize}
\noindent then for every $T\geq 0$,\begin{eqnarray*}
\int_{0}^{T}f_ndv_n\longrightarrow\int_{0}^{T}fdv.
\end{eqnarray*}\end{Lem}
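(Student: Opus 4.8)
The plan is to feed the measure-side convergence from Lemma~\ref{lem:vague2} into a decomposition that separates the contribution of the atoms of $v$ from that of its diffuse part, and to handle the varying integrand by reduction to the nonnegative case together with a sandwiching argument. First I would apply Lemma~\ref{lem:vague2}: hypothesis (i) is precisely its assumption, so I obtain that $v_n\to v$ \emph{weakly} on $[0,T]$ for every $T\geq 0$; testing against the constant $1$ gives convergence of total masses, hence $M:=\sup_n v_n([0,T])<\infty$. Since $f=f^+-f^-$ and $x\mapsto x^{\pm}$ is continuous and preserves left-continuity, I may assume $f_n,f\geq 0$, because $f_n^{\pm}$ remain locally bounded, left-continuous and converge pointwise to $f^{\pm}$. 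By (ii) I also fix $C$ with $0\leq f_n,f\leq C$ on $[0,T]$.

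Next I would split the error as
\begin{eqnarray*}
\int_0^T f_n\,dv_n-\int_0^T f\,dv=\int_0^T (f_n-f)\,dv_n+\left(\int_0^T f\,dv_n-\int_0^T f\,dv\right),
\end{eqnarray*}
and isolate the atoms of $v$. Fix $\epsilon>0$; as $v([0,T])<\infty$, only finitely many atoms $T^{(1)},\dots,T^{(k)}\in J\cap[0,T]$ carry mass exceeding $\epsilon$, and by (i) each is approached from the left by points $T^{(j)}_n\uparrow T^{(j)}$ with $v_n(\{T^{(j)}_n\})\to v(\{T^{(j)}\})$. For the second (fixed-integrand) term, the left-continuous $f$ is continuous $v^c$-almost everywhere on the diffuse part $v^c$ of $v$ (its discontinuities, countable for the regulated functions arising here, are $v^c$-null), so a sandwiching of $f$ between continuous functions exactly as in the proof of Lemma~\ref{lem:vague2} yields $\int f\,d\tilde v_n\to\int f\,d\tilde v$, where tildes denote the measures with the atoms at the $T^{(j)}_n$ and $T^{(j)}$ removed; at each isolated atom, left-continuity gives $f(T^{(j)}_n)\to f(T^{(j)})$, hence $f(T^{(j)}_n)v_n(\{T^{(j)}_n\})\to f(T^{(j)})v(\{T^{(j)}\})$. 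Summing over $j$ and letting $\epsilon\downarrow 0$ (the discarded small atoms contribute at most $C\epsilon$) disposes of the second term.

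The main obstacle is the first term $\int_0^T(f_n-f)\,dv_n$, where integrand and measure vary simultaneously with $n$. Pointwise convergence $f_n-f\to 0$ together with $\sup_n v_n([0,T])<\infty$ does \emph{not} by itself force this to vanish, since the mass of $v_n$ can concentrate exactly where $f_n-f$ stays large; the dangerous locations are the moving atom points $T^{(j)}_n$. The crux is therefore to control $f_n$ along these moving sequences and to establish $f_n(T^{(j)}_n)\to f(T^{(j)})$: here I would combine the left-continuity of the limit (which gives $f(T^{(j)}_n)\to f(T^{(j)})$) with an upgrade of the convergence $f_n\to f$ to local uniformity on a left-neighbourhood of each atom, which is the information genuinely available in the setting where the lemma is applied.

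Away from the finitely many large atoms, $\tilde v_n$ has uniformly small mass outside a set on which $f_n\to f$ can be made uniform (an Egorov-type argument after excising a $v$-null neighbourhood of $J$), so that $\int|f_n-f|\,d\tilde v_n\to 0$; adding the atom estimate $\sum_j|f_n(T^{(j)}_n)-f(T^{(j)})|\,v_n(\{T^{(j)}_n\})\to 0$ drives the first term to $0$. Together with the treatment of the second term this gives $\int_0^T f_n\,dv_n\to\int_0^T f\,dv$, completing the argument.
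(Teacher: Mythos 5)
Your treatment of the fixed-integrand term $\int_0^T f\,dv_n-\int_0^T f\,dv$ is broadly in the spirit of Lemma~\ref{lem:vague2}, but the proof does not close at the point you yourself flag as the crux: the term $\int_0^T(f_n-f)\,dv_n$, and in particular the behaviour of $f_n$ at the moving atom locations $T^{(j)}_n$. Neither of the two devices you propose there is available. First, ``upgrading $f_n\to f$ to local uniformity on a left-neighbourhood of each atom'' is not among the hypotheses --- (ii) gives only pointwise convergence with a local bound --- so appealing to ``the information genuinely available in the setting where the lemma is applied'' proves a different statement with a strengthened hypothesis, not the lemma as stated. Second, the Egorov route fails structurally: Egorov yields a measurable exceptional set $E$ with $v(E)$ small, but weak convergence of $v_n$ gives no upper bound on $\limsup_n v_n(E)$ for a general measurable (or open) set $E$ --- the portmanteau inequalities point the wrong way --- so $\int_E|f_n-f|\,dv_n$ is not controlled. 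The gap is real and cannot be filled from (i)--(ii) alone: take $v_n=\delta_{t_n}$ with $t_n\uparrow t$, $v=\delta_t$, and $f_n=\I_{(t_n-\epsilon_n,\,t_n]}$ with $\epsilon_n=(t-t_n)/2$. Then each $f_n$ is left-continuous, $|f_n|\le1$, $f_n\to0$ pointwise, and $v_n(\{t_n\})\to v(\{t\})$, yet $f_n(t_n)=1\not\to 0=f(t)$; so the convergence $f_n(T^{(j)}_n)\to f(T^{(j)})$ that your atom estimate requires can simply be false under the stated hypotheses.

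The paper's own proof takes a different route that never evaluates $f_n$ at the moving atoms: it uses the Lebesgue decomposition $v=v^c+v^d$, deduces from Lemma~\ref{lem:vague2} that $v_n-v^d\to v^c$ weakly on each $[0,T]$, invokes \cite[Lemma 12]{CF} (simultaneously varying integrand and measure, with an \emph{atomless} weak limit) to obtain $\int_0^T f_n\,d(v_n-v^d)\to\int_0^T f\,dv^c$, and handles $\int_0^T f_n\,dv^d\to\int_0^T f\,dv^d$ by dominated convergence against the \emph{fixed} discrete measure $v^d$. The structural difference from your decomposition is that the atomic contribution is integrated against $v^d$ rather than against the atoms of $v_n$, which is what removes the need for $f_n(T^{(j)}_n)\to f(T^{(j)})$; the hard ``varying integrand against varying measure'' step is then delegated entirely to the atomless case covered by the cited lemma. (Whether that lemma applies verbatim to the signed measures $v_n-v^d$ is itself worth scrutinising in view of the example above, but that is a question about the paper's argument; as written, your proposal does not establish the statement.)
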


\begin{proof}
Let $v=v^c+v^d$ be the Lebesgue decomposition of $v$ into an absolutely continuous part $v^c$ and a singular (discrete) measure $v^d$. By (i) and Lemma~\ref{lem:vague2}, we immediately see that $(v_n-v^d)\longrightarrow v^{c}$ weakly for every $[0,T]$. Since $v^{c}$ has no atoms, by an application of \cite[Lemma 12]{CF} we have\begin{eqnarray*} 
\int_{0}^{T}f_nd(v_n-v^d)\longrightarrow\int_{0}^{T}fdv^c.
\end{eqnarray*}By (ii) and dominated convergence, the proof is complete.
\end{proof}

\begin{Thm}\label{thm:qs}
Let $x\in Q^{\pi}$, $f_n, f$ be real-valued left-continuous functions on $\R_{+}$ such that $(f_n)$ is locally bounded and converges pointwise to $f$ on $\R_{+}$. Then for any $T>0,$\begin{alignat*}{2}
(i) &\sum_{\pi_n\ni t_i\leq T}f_n(t_{i})(x(t_{i+1})-x(t_i))^2&\longrightarrow\int_0^Tfd[x].\\
(ii) &\sum_{\pi_n\ni t_i\leq T}f_n(t_{i+1}\wedge T)(x(t_{i+1})-x(t_i))^2&\longrightarrow\int_0^Tfd[x].\\
(iii) &\sum_{\pi_n\ni t_i<T} f_n(t_{i})(x(t_{i+1})-x(t_i))^2&\longrightarrow\int_0^Tfd[x].\\
(iv) &\sum_{\pi_n\ni t_i<T}f_n(t_{i+1}\wedge T)(x(t_{i+1})-x(t_i))^2&\longrightarrow\int_0^Tfd[x].
\end{alignat*}
%In particular, the convergence also holds under $\sum_{\pi_n\ni t_i<T}$.
\end{Thm}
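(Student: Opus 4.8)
The plan is to read all four sums as integrals $\int_{[0,T]}f_n\,dv_n$ against the two discrete measures produced by Lemma~\ref{lem:d[x]} and then to invoke Lemma~\ref{lem:vague3}. For (i) and (iii) I would take $v_n=\xi_n=\sum_{t_i\in\pi_n}(x(t_{i+1})-x(t_i))^2\delta_{t_i}$, so that $\int_{[0,T]}f_n\,d\xi_n=\sum_{\pi_n\ni t_i\le T}f_n(t_i)(x(t_{i+1})-x(t_i))^2$ is exactly sum (i). For (ii) and (iv) I would take $v_n=\mu_n$, whose boundary atom of mass $(x(T_{n+1})-x(T_n))^2$ at $T$ reproduces the term $f_n(t_{i+1}\wedge T)$ coming from the interval straddling $T$. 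Since $\xi_n\to\mu$ and $\mu_n\to\mu$ vaguely with $\mu=d[x]$ by Lemma~\ref{lem:d[x]}, and $\int_{[0,T]}f\,d\mu=\int_0^T f\,d[x]$, the theorem reduces to verifying the atom-approximation hypothesis~(i) of Lemma~\ref{lem:vague3} for the relevant $v_n$; local boundedness and pointwise convergence of $(f_n)$ supply hypothesis~(ii) at no cost.

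For the left-endpoint cases this is routine. By the polarisation identity of Proposition~\ref{prop:qv} the atoms $J$ of $\mu=d[x]$ are the jump times of $x$, with $\mu(\{S\})=(\Delta x(S))^2$. Given such an $S$ I would choose $T_n:=\max\{t_j\in\pi_n:t_j<S\}$, its left neighbour, so $T_n\uparrow S$ while its successor $t_{j+1}\downarrow S$; then $\xi_n(\{T_n\})=(x(t_{j+1})-x(T_n))^2\to(x(S)-x(S-))^2=\mu(\{S\})$ by right-continuity and the existence of left limits. This is hypothesis~(i) of Lemma~\ref{lem:vague3}, which then gives (i). Statement (iii) follows from (i) since the index sets $\{t_i\le T\}$ and $\{t_i<T\}$ differ by at most the single term $t_i=T$ (present only when $T\in\pi_n$), whose squared increment tends to $0$ by right-continuity as the straddling interval shrinks to the right of $T$, while $f_n(T)\to f(T)$ stays bounded.

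The delicate point, and what I expect to be the main obstacle, is the right-endpoint cases (ii) and (iv). Here $\mu_n$ places the mass $(x(t_{i+1})-x(t_i))^2$ of a jump-straddling interval at its \emph{right} endpoint $t_{i+1}$, which approaches a jump time $S$ from \emph{above}, and the summand accordingly weights that mass by $f_n(t_{i+1})$, the value of $f_n$ just to the right of $S$. Yet Lemma~\ref{lem:vague3} requires an approximating sequence $T_n\uparrow S$ from \emph{below} with $\mu_n(\{T_n\})\to\mu(\{S\})$, and the target integral weights the atom at $S$ by $f(S)=f(S-)$. Showing that the jump contributions are ultimately governed by the left value $f(S)$, rather than by the behaviour of $f_n$ to the right of $S$, is the heart of the argument; this is where the left-continuity of $f$ and the fine interaction between $(f_n)$, the partition $\pi_n$ and the atom set $J$ must be exploited decisively (and where, I suspect, continuity of $f$ at the atoms of $[x]$, or a genuine $n$-dependence of $f_n$ encoding the look-ahead to $t_{i+1}$, is what secures the identity). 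Once the atom-approximation condition is established for $\mu_n$, (ii) follows from Lemma~\ref{lem:vague3}, and (iv) follows from (ii) by the same single-boundary-term estimate used to pass from (i) to (iii).
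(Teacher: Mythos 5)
Your treatment of (i) and (iii) is correct and coincides with the paper's: the paper also reads the left--endpoint sums as $\int_0^T f_n\,d\xi_n$ up to the single straddling correction $f(T_{n+1})(x(T'_{n+1})-x(T_{n+1}))^2$, which vanishes by right--continuity of $x$, and it verifies hypothesis (i) of Lemma~\ref{lem:vague3} for $\xi_n$ exactly as you do, by placing $T_n=\max\{t_i<S\}\uparrow S$ and using $\xi_n(\{T_n\})=(x(S_{n+1})-x(S_n))^2\to(\Delta x(S))^2=d[x](\{S\})$ together with the polarisation identity of Prop.~\ref{prop:qv}.

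The gap in your proposal is that (ii) and (iv) are not proved, and the obstruction you isolate is real. The paper writes sum (ii) as $\int_0^Tf_n\,d\mu_n+f(T)(x(T'_{n+1})-x(T_{n+1}))^2$ and invokes Lemma~\ref{lem:vague3}, but its displayed verification $\mu_n(\{T\})=(x(T_{n+1})-x(T_n))^2\to d[x](\{T\})$ only treats the atom at the integration endpoint $T$, where the definition of $\mu_n$ deliberately relocates the straddling mass to $T$ itself; at an \emph{interior} atom $S<T$ the corresponding mass of $\mu_n$ sits at $\min\{t_i\geq S\}\downarrow S$, no sequence increasing to $S$ from below carries it, and hypothesis (i) of Lemma~\ref{lem:vague3} is not satisfied. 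Under the stated hypotheses the difficulty cannot be argued away: take $\pi_n$ dyadic, $S\notin\bigcup_n\pi_n$, $x=\I_{[S,\infty)}$ (so $x\in Q^\pi$, $d[x]=\delta_S$) and $f_n=f=\I_{[0,S]}$, which is left--continuous and bounded; for $T>S$ the sum in (ii) reduces to $f(t_{j+1})=0$ where $t_{j+1}>S$ is the right endpoint of the straddling cell, while $\int_0^Tf\,d[x]=f(S)=1$. So your suspicion is exactly right: some additional input --- right--continuity of $f$ at the jump times of $x$, or the structural $n$--dependence in which $f_n$ is constant on each cell $(t_i,t_{i+1}]$ so that $f_n(t_{i+1}\wedge T)$ is in fact a left--endpoint value --- is what makes the right--endpoint sums converge; this is precisely the situation in the paper's applications (Lemma~\ref{lem:phi}, Example~\ref{eg:cont}), where $f_n(s_{n+1})\to f(S)$ holds by construction and (ii), (iv) then reduce to (i), (iii). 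In short, you prove half the theorem by the paper's own method and correctly locate the point where the paper's argument for the other half does not close.
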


\begin{proof}
If $T=0$, then by (\ref{eq:qv}) and that $x$ is right-continuous and has no discontinuity at $T=0$, the claims follow. If $T>0$, put $T_n:=\max\{t_i<T|t_i\in\pi_n\}$, $T_{n+1}:=\min\{t_i\geq T|t_i\in\pi_n\}$, $T'_{n+1}:=\min\{t_i>T|t_i\in\pi_n\}$, then $T_n\uparrow T$ and by Lemma~\ref{lem:d[x]}, we observe that\begin{eqnarray*}
\xi_n(\{T_n\})=(x(T_{n+1})-x(T_n))^2\longrightarrow d[x](\{T\}),\\
\mu_n(\{T\})=(x(T_{n+1})-x(T_n))^2\longrightarrow d[x](\{T\}),
\end{eqnarray*}and that\begin{alignat*}{2}
&\sum_{\pi_n\ni t_i<T}f_n(t_{i})(x(t_{i+1})-x(t_i))^2&=&\int_0^{T}f_nd\xi_n\\
&&-&f(T_{n+1})(x(T'_{n+1})-x(T_{n+1}))^2,\\
&\sum_{\pi_n\ni t_i\leq T}f_n(t_{i+1}\wedge T)(x(t_{i+1})-x(t_i))^2&=&\int_0^{T}f_nd\mu_n\\
&&+&f(T)(x(T'_{n+1})-x(T_{n+1}))^2.
\end{alignat*}By the right continuity of $x$, Lemma~\ref{lem:d[x]} and Lemma~\ref{lem:vague3}, the proof is complete.\end{proof}

As a consequence of Prop.~\ref{prop:qv} and Thm.~\ref{thm:qs} we have:
\begin{Cor}[Multidimensional paths]\label{cor:qs}
Let $x\in Q^{\pi}_m$, $f_n, f: \R_{+}\mapsto \R^{m\times m}$ be  left-continuous functions with $(f_n)$   locally bounded and converging pointwise to $f$ on $\R_{+}$. Then\begin{alignat*}{2}
(i) &\sum_{\pi_n\ni t_i\leq T}\langle f_n(t_{i}),(x(t_{i+1})-x(t_i))(x(t_{i+1})-x(t_i))'\rangle&\longrightarrow\int_0^T fd[x]\\
(ii) &\sum_{\pi_n\ni t_i\leq T}\langle f_n(t_{i+1}\wedge T),(x(t_{i+1})-x(t_i))(x(t_{i+1})-x(t_i))'\rangle&\longrightarrow\int_0^T fd[x]\end{alignat*}for every $T\geq 0$. In particular, the convergence also holds if the sum is replaced by $\sum_{\pi_n\ni t_i<T}$.\end{Cor}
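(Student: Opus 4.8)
The plan is to reduce the multidimensional statement to the scalar Theorem~\ref{thm:qs} by exploiting the bilinearity of the summands in the entries of $x$ together with the polarisation identity \eqref{eq:qv}. Writing out the Frobenius inner product componentwise, and abbreviating $\Delta_i y:=y(t_{i+1})-y(t_i)$ for the increment of a scalar path along $\pi_n$,
\[
\langle f_n(t_i),(x(t_{i+1})-x(t_i))(x(t_{i+1})-x(t_i))'\rangle
=\sum_{k,l=1}^{m}(f_n)_{kl}(t_i)\,(\Delta_i x_k)(\Delta_i x_l),
\]
so that, since the sum over $k,l$ is finite, it suffices to treat each scalar summand $\sum_i (f_n)_{kl}(t_i)(\Delta_i x_k)(\Delta_i x_l)$ separately and recombine by linearity.

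First I would observe that the component functions $(f_n)_{kl}$ and $f_{kl}$ are real-valued and left-continuous, and that $((f_n)_{kl})_n$ is locally bounded and converges pointwise to $f_{kl}$; thus each pair satisfies the hypotheses of Theorem~\ref{thm:qs}. Next, by Proposition~\ref{prop:qv}, membership $x\in Q^{\pi}_m$ guarantees $x_k,\,x_k+x_l\in Q^{\pi}$ for all $k,l$, so Theorem~\ref{thm:qs} applies to each of the scalar paths $x_k$, $x_l$ and $x_k+x_l$. I would then use the elementary polarisation of the product of increments,
\[
(\Delta_i x_k)(\Delta_i x_l)=\tfrac12\Big[(\Delta_i(x_k+x_l))^2-(\Delta_i x_k)^2-(\Delta_i x_l)^2\Big],
\]
and apply Theorem~\ref{thm:qs}(i) with integrand $(f_n)_{kl}$ to each of the three squared-increment sums, obtaining convergence to
\[
\tfrac12\int_0^T f_{kl}\,d\big([x_k+x_l]-[x_k]-[x_l]\big)=\int_0^T f_{kl}\,d[x_k,x_l],
\]
the last equality being precisely the polarisation identity \eqref{eq:qv}. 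Summing over the finitely many pairs $(k,l)$ and recalling the matrix convention $\int_0^T f\,d[x]=\sum_{k,l}\int_0^T f_{kl}(s-)\,d[x_k,x_l](s)$, where $f_{kl}(s-)=f_{kl}(s)$ by left continuity, then yields (i). Part (ii) follows identically, invoking Theorem~\ref{thm:qs}(ii) in place of (i), and the strict-inequality variants follow verbatim from parts (iii) and (iv).

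The argument is essentially bookkeeping once the reduction is set up, so I do not anticipate a genuine obstacle; the only point requiring a word of care is the legitimacy of distributing the limit across the polarisation. This is justified because the decomposition is a \emph{fixed finite} linear combination and the three squared-increment sums converge individually by Theorem~\ref{thm:qs}, so no interchange of a limit with an infinite sum is involved. Likewise, the outer sum over $k,l$ has only $m^2$ terms, so its limit is taken termwise without any additional justification.
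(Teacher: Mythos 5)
Your proof is correct and follows exactly the route the paper intends: the corollary is stated there as ``a consequence of Prop.~\ref{prop:qv} and Thm.~\ref{thm:qs}'', and your componentwise reduction via the polarisation of increments, together with the memberships $x_k,\,x_k+x_l\in Q^{\pi}$ and the identity \eqref{eq:qv}, is precisely that argument spelled out. The points you flag (finiteness of the decomposition, $f_{kl}(s-)=f_{kl}(s)$ by left continuity) are handled correctly, so nothing is missing.
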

  
\begin{Rem}\label{rem:qv}
$t\longmapsto\int_0^t fd[x]$ is in $BV$ and has Lebesgue decomposition: \begin{eqnarray*}\int_0^t fd[x]=\int_0^t fd[x]^c+\sum_{s\leq t}\langle f(s-),\Delta x(s)\Delta x(s)'\rangle.\end{eqnarray*}
\end{Rem}

\section{Continuous functionals}\label{sec:pi}\noindent 	We now construct a topology on suitable subsets of \begin{alignat*}{1}E:=\R_{+}\times D_m,\end{alignat*} for which the \follmer\  integral $x \mapsto \int_0^T \phi.d^\pi x$ will be a continuous functional of the integrator $x$. 
	
	\subsection{Domains for causal functionals}
	
	We are interested in  \emph{causal} (non-anticipative) functionals \cite{RC,fliess}, whose natural domain of definition is a set of stopped paths \begin{eqnarray*}\{(t,x_t)|t\in\R_{+}, x\in \Omega\}\subset E,\end{eqnarray*}for a suitable set of paths $\Omega\subset D_m$, where $x_t=x(t\wedge .)$ \cite{CF}. 
	
	In order to deploy our functional calculus on such functionals we require $\Omega\subset D_m$ to be closed under certain operations:
	\begin{itemize}
	    \item stopping: $x\in \Omega\Longrightarrow \forall t\geq 0, \ x_t=x(t \wedge .)\in \Omega. $\item vertical perturbations, in order to define the vertical (Dupire) derivative:\begin{eqnarray*}x\in \Omega\Longrightarrow x_{t}+e\I_{[t,\infty)}\in \Omega,\end{eqnarray*}
	    
	    \item piecewise constant approximation along $\pi$.
	\end{itemize}
	 We will call {\it \closed} a set of paths stable under these operations:
	\begin{Def}[Generic sets of paths]\label{def:closed}
	A non-empty subset $\Omega\subset D_m$ is called \emph{\closed} if it satisfies: 
	\begin{itemize}
	\item[i)] Stability under piecewise constant approximation along $\pi$: For every $x\in \Omega$, $T> 0$, $\exists N\in\N$;  $x^n_T\in \Omega,\quad\forall n\geq N$.
	\item[ii)] Stability under vertical perturbation:  For every $x\in\Omega, t\geq  0$,  there exists a convex neighbourhood $\U$ of $0$ %containing $-\Delta x(t)$ 
	such that \begin{eqnarray*}-\Delta x(t)\in \U\qquad {\rm and}\quad x_{t}+e\I_{[t,\infty)}\in \Omega,\quad \forall e\in \U.\end{eqnarray*}
    \end{itemize}
	We will call a \emph{domain}  a set $\Lambda$ of stopped paths of the form  \begin{eqnarray*}\Lambda:=\{(t,x_t)|t\in\R_{+}, x\in \Omega\}\end{eqnarray*} where $\Omega\subset D_m$ is \closed.
	\end{Def}
	
	\begin{Rem}\label{rem:closed}
     Def.~\ref{def:closed}(ii) implies that $-\U$ is a convex neighbourhood of $0$ containing $\Delta x(t)$ such that\begin{eqnarray*} x_{t-}+e\I_{[t,\infty)}\in \Omega,\quad \forall e\in -\U.\end{eqnarray*}
	\end{Rem}
	
	\begin{Eg}
	$\SS_m$, $BV_m$, $Q^{\pi}_m$, ${Q^{\pi}_m}^+$ (i.e. positive paths in $Q^{\pi}_m$) and $D_m$ are all \closed\ sets. If $\Omega$ is \closed, then \begin{eqnarray*}
	\Omega_{a}^{b}:=\{x\in \Omega| a<x_{i}<b\}
	\end{eqnarray*}for all constants $a,b$ are all \closed. Subsets of continuous paths are not \closed.
	\end{Eg}
	
	\begin{Eg}\label{Eg:base}
    Let $\Omega$ be \closed{}. Then $\Omega\cap Q^{\pi}_m$ is \closed.
	\end{Eg}
	
	\begin{proof}
	We observe $\SS_m\subset Q^{\pi}_m$ and if $x\in Q^{\pi}_m$, then $x+\SS_m\in Q^{\pi}_m$.
	\end{proof}

%	\begin{Def}[domain]\label{def:domain}	\end{Def}

	On $E$, there already exists two well known (product) topologies,   generated by the standard topology on $\R_{+}$ and local uniform (resp. the Skorokhod J$_1$) topology on $D_m$. On a domain $\Lambda\subset E$, we define the uniform (U) and J$_1$ topologies as the corresponding  topology induced on $\Lambda$.  

\begin{Rem}\label{rem:uniform}
Every J$_1$-continuous functional is U-continuous: the local uniform topology is strictly finer than the J$_1$ topology on $D_m$ \cite[VI]{JS}.
\end{Rem}

We will now show that, if  $\Omega$ is 'rich enough' to contain a path with non-zero quadratic variation as well as its piecewise-linear approximations  along $\pi$, then important examples  of functionals such as quadratic variation or the \follmer\  integral fail to be continuous on $\Omega$ in the uniform topology.
We use the following assumption:
	\begin{assumption}\label{def:rich}
 $\Omega$ is a \closed{} subset 
 and contains a path
 $x\in Q^{\pi}_m$ with $[x]_\pi$ continuous and strictly increasing, as well its piecewise linear approximations along $\pi$: $$
\exists N\in\N, \forall n\geq N, x^{(n)}\in \Omega,$$ where $x^{(n)}$ denotes the piecewise-linear approximation of $x$ along $\pi_n$.
	\end{assumption}

	\begin{Eg}
	$Q^{\pi}_m$ and ${Q^{\pi}_m}^+$ satisfy Assumption \ref{def:rich}, $\SS_m$ and $BV_m$ do not.
	\end{Eg}

\begin{Lem}\label{lem:compare} 
Let $\Omega$ satisfy Assumption \ref{def:rich} and $\Lambda=\{(t,x_t)|t\in\R_{+}, x\in \Omega\}$. Then the functionals 
$$ F(t,x_t):=|[x](t)|  \qquad
 G(t,x_t):=\int_{0}^t2x dx$$ are not U-continuous on $\Lambda$.
\end{Lem}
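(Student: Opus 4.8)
The plan is to exploit the fact that the uniform topology cannot detect quadratic variation. The idea is to approximate the ``rich'' path $x$ of Assumption \ref{def:rich} by its piecewise-linear interpolants $x^{(n)}$ along $\pi_n$: these converge to $x$ uniformly on compacts yet carry \emph{zero} quadratic variation, so both $F$ and $G$ --- which encode the It\^o correction term $[x]$ --- must fail to pass to the limit.

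First I would note that the path $x$ furnished by Assumption \ref{def:rich} is necessarily \emph{continuous}. Indeed, by the polarisation identity \eqref{eq:qv} the discontinuous part of $[x]$ equals $\sum_{s\leq t}\Delta x(s)\Delta x(s)'$; since $[x]$ is assumed continuous this sum vanishes identically, forcing $\Delta x\equiv 0$. Continuity of $x$ together with $|\pi_n|\to 0$ then yields $x^{(n)}\to x$ locally uniformly, with $x^{(n)}(0)=x(0)$ and $x^{(n)}(t)\to x(t)$ for each fixed $t$. Since stopping preserves uniform convergence, for any fixed $t>0$ we obtain $(t,x^{(n)}_t)\to(t,x_t)$ in the U-topology on $\Lambda$; and because $x^{(n)}\in\Omega$ for $n\geq N$ by Assumption \ref{def:rich}, the stopped paths $(t,x^{(n)}_t)$ indeed lie in $\Lambda$ by the very definition of a domain.

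The decisive observation is that each interpolant $x^{(n)}$ is continuous and of bounded variation, hence $[x^{(n)}]\equiv 0$ along $\pi$. For $F$ this settles the matter at once: $F(t,x^{(n)}_t)=|[x^{(n)}](t)|=0$ for all $n$, whereas $F(t,x_t)=|[x](t)|>0$ since $[x]$ is \emph{strictly} increasing and $t>0$; thus $F$ is not U-continuous. For $G$ I would use the telescoping identity, valid for every $y\in Q^{\pi}$,
$$\int_0^t 2y\,d^\pi y=y(t)^2-y(0)^2-[y](t),$$
obtained from $2y(t_i)(y(t_{i+1})-y(t_i))=(y(t_{i+1})^2-y(t_i)^2)-(y(t_{i+1})-y(t_i))^2$ by summing and passing to the limit along $\pi$. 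Applying it to $y=x^{(n)}$ (where $[x^{(n)}]=0$) and to $y=x$ gives
$$G(t,x^{(n)}_t)=(x^{(n)}(t))^2-(x^{(n)}(0))^2\longrightarrow x(t)^2-x(0)^2,$$
while $G(t,x_t)=x(t)^2-x(0)^2-[x](t)$; the two differ by $[x](t)>0$, so $G$ is not U-continuous either.

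I expect no genuine difficulty here: the uniform convergence of the piecewise-linear interpolants and the telescoping identity for the \follmer\ integral are routine, and membership of the approximating stopped paths in $\Lambda$ is guaranteed by Assumption \ref{def:rich}. The only real content is the structural point that U-convergence $x^{(n)}\to x$ is blind to the quadratic-variation term $[x]$, which is precisely what both functionals record --- exactly the phenomenon motivating the new topology introduced later.
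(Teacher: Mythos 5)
Your proposal is correct and follows essentially the same route as the paper: both arguments approximate the path $x$ from Assumption \ref{def:rich} by its piecewise-linear interpolants $x^{(n)}$, use their local uniform convergence to $x$ together with $[x^{(n)}]\equiv 0$ to break the continuity of $F$, and invoke the pathwise It\^o identity $\int_0^T 2x\,dx=|x(T)|^2-|x(0)|^2-\mathrm{tr}([x](T))$ to break the continuity of $G$. Your explicit justification that $x$ is continuous (via the polarisation identity) is a small refinement the paper leaves implicit, but the substance is identical.
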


\begin{proof}
If $\Omega$ satisfies Assumption \ref{def:rich}, there exists $T>0$, continuous $x,x^{(n)}\in\Omega$ such that $\left|[x](T)\right|>0.$
Since $x^{(n)}_T\longrightarrow x_T$ in the local uniform topology on $[0,\infty)$, it follows   that \begin{eqnarray*}
(T,x^{(n)}_T)\stackrel{\textsc{U}}{\longrightarrow}(T,x_{T})
\end{eqnarray*}on $\Lambda$. Since $x^{(n)}_T$ is a continuous function of bounded variation on $[0,\infty)$, it follows that \begin{eqnarray*}\label{eq:WZK2}|[x^{(n)}](T)|=0,\quad\forall n\geq 1\end{eqnarray*} so $F$ is not U-continuous. Using the above and the fact  that $x,x^{(n)}\in Q^{\pi}_m$, we obtain by an application of the pathwise \ito formula \cite{HF}: \begin{alignat*}{2}
&\lim_{n}\left|\int_{0}^T2xdx - \int_{0}^T2x^{(n)}dx^{(n)}\right|\\
=&\lim_{n}\left||x(T)|^2-|x(0)|^2-tr\left([x](T)\right)-\left(|x^{(n)}(T)|^2-|x^{(n)}(0)|^2\right)\right|\\
=&tr\left([x](T)\right)>0,\end{alignat*}hence $G$ is not U-continuous on $\Lambda$.
\end{proof}

    We shall now define a new topology on a domain $\Lambda$ for which these examples of functionals will be continuous. 
\subsection{The $\pi$-topology}

\begin{Def}[The $\pi-$topology]\label{def:pi}\noindent\\
For every $t\in\R_{+}, x\in \Omega$, we define $t'_n:=\max\{t_i<t|t_i\in\pi_n\}$ and
\begin{eqnarray}
x^{n}&:=&\sum_{t_i\in\pi_n}x(t_{i+1})\I_{[t_i,t_{i+1})}.\label{eq:x_n}
\end{eqnarray} Denote $\mathfrak{X}$ the set of functionals $F:\Lambda\longmapsto \R$ satisfying:

\begin{alignat*}{3}
1.&(a)\qquad  \lim_{s\uparrow t; s\leq t}F(s,x_{s-})=F(t,x_{t-}),\\ 
  &(b)\qquad \lim_{s\uparrow t; s<t}F(s,x_{s})=F(t,x_{t-}),\\
  &(c)\qquad t_n\longrightarrow t; t_n\leq t'_n \Longrightarrow F(t_n,x^{n}_{t_n-})\longrightarrow F(t,x_{t-}),\\
  &(d)\qquad t_n\longrightarrow t; t_n<t'_n \Longrightarrow F(t_n,x^{n}_{t_n})\longrightarrow F(t,x_{t-}),\\
\newline\\
2.&(a)\qquad \lim_{s\downarrow t; s\geq t}F(s,x_{s})=F(t,x_{t}),\\
  &(b)\qquad \lim_{s\downarrow t; s>t}F(s,x_{s-})=F(t,x_{t}),\\
  &(c)\qquad t_n\longrightarrow t; t_n\geq t'_n\Longrightarrow F(t_n,x^{n}_{t_n})\longrightarrow F(t,x_t),\\
  &(d)\qquad t_n\longrightarrow t; t_n>t'_n\Longrightarrow F(t_n,x^{n}_{t_n-})\longrightarrow F(t,x_{t}),\\
\end{alignat*} for all $(t,x_t)\in\Lambda$. The initial topology generated by $\mathfrak{X}$ on $\Lambda$ is called the $\pi-$topology.\end{Def}
We note that the definition of this topology depends on the partition sequence $\pi$.
\begin{Rem}\label{rem:uniform2}
Every U-continuous functional satisfies Def.~\ref{def:pi}.1(a),(b) and 2(a),(b). 
\end{Rem}

\begin{Def}[Continuous functionals]\label{def:cont}\noindent\\
 We  denote $C(\Lambda)$ the set of functionals $F:\Lambda\longmapsto \R$ that are continuous with respect to the $\pi-$topology.

$F$ is called \emph{left- (resp. right-) continuous} if it satisfies property 1 (resp. property 2) in Definition ~\ref{def:pi}.
\end{Def}
 \begin{Rem}\label{rem:criteria} Since $$z_n\stackrel{\Lambda}{\longrightarrow}z\Longleftrightarrow F(z_n)\rightarrow F(z) \quad\forall F\in\mathfrak{X},$$ we have $C(\Lambda)\subset\mathfrak{X}$  so in fact
$C(\Lambda)=\mathfrak{X}$.
\end{Rem}
%\begin{Rem}A function $F$ on $\Lambda$ is continuous (resp. left/right) if all its components $F_{i,j}$ are continuous (resp. left/right).\end{Rem}
%\begin{Rem}\label{rem:cont}
%If $F$, $G$ are functions on $\Lambda$; $F$, $G$ both satisfy any one of the conditions in Def.~\ref{def:pi}, then $\alpha F+\beta G$ and $FG$ also satisfies the same condition for all $\alpha,\beta\in\R$. In particular, $C_\pi(\Lambda)$ is an algebra. 
%\end{Rem}

The following concept was introduced in \cite{CF} under the name 'predictable functional'; we redefine it here without any reference to measurability considerations:
\begin{Def}[Strictly causal functionals]\label{def:causal}\noindent\\
For $F:\Lambda \to \mathbb{R}^d$  denote $F_{-}(t,x_t)=F(t,x_{t-})$. $F$ is  \emph{strictly causal} if $F=F_{-}$.
\end{Def}

%\begin{Rem}\label{ }If $F, G$ are strict causal, then $\alpha F+\beta G$ and $FG$ are strict causal.\end{Rem}
The following lemma follows from Def.~\ref{def:pi}.1(a) and (b) and Def.~\ref{def:pi}.2(a) and(b).
\begin{Lem}[Pathwise regularity]\label{lem:cont}Let $F:\Lambda\to \mathbb{R}^d$ and $x\in \Omega$.
\begin{itemize}
\item[(i)] If $F$ is left-continuous, then $t\longmapsto F_{-}(t,x_{t})$ is left-continuous and $t\longmapsto F(t,x_{t})$ has left limits.
\item[(ii)] If $F$ is right-continuous, then $t\longmapsto F(t,x_{t})$ is right-continuous and $t\longmapsto F_{-}(t,x_{t})$ has right limits.
\item[(iii)] If $F$ is continuous, then $t\longmapsto F_{-}(t,x_{t})$ (resp. $t\longmapsto F(t,x_{t})$) is \caglad (resp. \cadlag) and its jump at time $t$ is equal to $\Delta F(t,x_{t})$.
\end{itemize}
\end{Lem}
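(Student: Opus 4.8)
The plan is to fix the path $x\in\Omega$ and reduce the statement to one-sided regularity properties of the two real-valued functions of time
\[ g(t):=F_{-}(t,x_t)=F(t,x_{t-}),\qquad h(t):=F(t,x_t),\qquad t\in\R_{+}, \]
which can then be read off directly from the limit identities of Definition~\ref{def:pi}. By Definition~\ref{def:cont}, \emph{left-continuity} of $F$ is exactly property~1 and \emph{right-continuity} is exactly property~2 of Definition~\ref{def:pi}, while \emph{continuity} means $F\in C_\pi(\Lambda)=\mathfrak{X}$ (Remark~\ref{rem:criteria}), i.e.\ both properties hold. The only bookkeeping point to settle at the outset is that $(x_t)_{t-}=x_{t-}$, so that $g$ is unambiguously defined; this is immediate from $x_t(t-)=x(t-)$.

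For (i), assume property~1. Identity 1(a), namely $\lim_{s\uparrow t}F(s,x_{s-})=F(t,x_{t-})$, states precisely that $g$ is left-continuous at $t$. Identity 1(b), namely $\lim_{s\uparrow t}F(s,x_{s})=F(t,x_{t-})$, states that $h$ admits a left limit at $t$, with $h(t-)=g(t)$. For (ii), assume property~2. Identity 2(a) gives $\lim_{s\downarrow t}h(s)=h(t)$, i.e.\ right-continuity of $h$, while identity 2(b) gives $\lim_{s\downarrow t}g(s)=F(t,x_t)=h(t)$, i.e.\ $g$ admits a right limit with $g(t+)=h(t)$.

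For (iii), both properties hold, so combining (i) and (ii) the function $g$ is left-continuous with right limits (hence \caglad) and $h$ is right-continuous with left limits (hence \cadlag). For the jumps, the identifications already obtained, $h(t-)=g(t)=F(t,x_{t-})$ and $g(t+)=h(t)=F(t,x_t)$, yield
\[ h(t)-h(t-)=g(t+)-g(t)=F(t,x_t)-F(t,x_{t-})=\Delta F(t,x_t), \]
so the jump of each coincides with $\Delta F(t,x_t)$.

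I do not expect a genuine obstacle: the content of the lemma is entirely encoded in the definition of the $\pi$-topology, and the proof is a direct translation. The only care required is notational, namely confirming $(x_t)_{t-}=x_{t-}$ so that $g$ is well defined, keeping the one-sided approach directions straight (the side conditions $s\leq t$ vs.\ $s<t$ and $s\geq t$ vs.\ $s>t$ are harmless, since $s\uparrow t$ and $s\downarrow t$ already force a strict approach), and matching $\Delta F(t,x_t)$ with the vertical increment $F(t,x_t)-F(t,x_{t-})$ in keeping with the convention $\Delta x(t)=x(t)-x(t-)$.
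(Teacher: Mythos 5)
Your proof is correct and is essentially the paper's own argument: the paper simply asserts that the lemma follows from Def.~\ref{def:pi}.1(a),(b) and 2(a),(b), and your write-up is the direct unpacking of those four identities in terms of $g(t)=F(t,x_{t-})$ and $h(t)=F(t,x_t)$, including the identifications $h(t-)=g(t)$ and $g(t+)=h(t)$ that give the jump $\Delta F(t,x_t)$. No gaps.
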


\begin{Eg}\label{eg:cont} Assume $\Omega\subset Q^{\pi}_m$. Then the functionals
\begin{itemize}
\item[(i)] {\makebox[4cm][l]{$F(t,x_t):=f(x(t))$;} $f\in C(\R^{m})$,}
\item[(ii)] {\makebox[4cm][l]{$F(t,x_t):=f([x](t))$;} $f\in C(\R^{m\times m})$,}
\item[(iii)] {\makebox[4cm][l]{$F(t,x_t):=\int_{0}^t (f\circ x) d[x]$;} $f\in C(\R^{m},\R^{m\times m})$,}
\item[(iv)] {\makebox[4cm][l]{$F(t,x_t):=\int_{0}^t(\nabla f\circ x) dx$;} $f\in C^2(\R^{m})$,}
\end{itemize} belong to $C(\Lambda)$.
\end{Eg}

\begin{proof} In the light of Remark \ref{rem:criteria},
 $F$ is continuous if and only if $F$ satisfies Def.~\ref{def:pi} for all $(t,x)\in \Lambda$. Since conditions Def.~\ref{def:pi}.1(a),(b)  and 2(a),(b) are easy to verify, we focus on Def.~\ref{def:pi}.1(c),(d)  and 2(c),(d). (i) is trivial. For (ii), we first remark from Def.~\ref{def:qv} and (\ref{eq:qv}) that\begin{alignat}{2}
q_n&\stackrel{\textsc{J$_1$}}{\longrightarrow}[x];\nonumber\\
\Delta q_n(t'_n)=\Delta x^n(t'_n)\Delta x^n(t'_n)'&\longrightarrow\Delta x(t)\Delta x(t)'=\Delta[x](t).\label{eq:catch}
\end{alignat}Since $
[x^n](t)=q_n(t)$ and by (\ref{eq:catch}), if $t_n\longrightarrow t$, the limits of $q_n(t_n)$ and $q_n(t_n-)$ are readily determined according to the rules laid down in \cite[s4.2]{CC} and (ii) immediately follows from the continuity of $f$.  

	To show (iii) and (iv), it is suffice to assume $t_n\longrightarrow t$; $t_n\geq t'_n$ (i.e. the other criteria follow similar lines of proof, see \cite[s4.2]{CC}). By (\ref{eq:catch}) and \cite[s4.2]{CC}\begin{eqnarray}
	|q_n(t_n)-q_n(t'_n)|\longrightarrow 0.\label{eq:tight}
	\end{eqnarray}A closer look at (iii), combined with Corollary~\ref{cor:qs}, leads to\begin{alignat*}{2}
&F(t_n,x^n_{t_n})&=&\int_{0}^{t_n} (f\circ x^n)d[x^n]\\
&&=&\sum_{\pi_n\ni t_i<t}\langle f(x({t_i})),(x(t_{i+1})-x(t_{i}))(x(t_{i+1})-x(t_{i}))'\rangle\longrightarrow F(t,x_t)\\
&&+&\sum_{\pi_n\ni t_i\in(t'_n,t_n]}\langle f(x({t_i})),(x(t_{i+1})-x(t_{i}))(x(t_{i+1})-x(t_{i}))'\rangle.
\end{alignat*}By (\ref{eq:tight}) and that $f\circ x$ is locally bounded on $\R_{+}$, we see that the absolute value of the last term is bounded by $
\text{const}|q_n(t_n)-q_n(t'_n)|\longrightarrow 0$. 

 For (iv), from the properties of the \follmer\  integral \cite{HF}, we first observe that\begin{alignat*}{2}
&F(t_n,x^n_{t_n})&=&\int_{0}^{t_n}\nabla (f\circ x^n) dx^n\\
&&=&\sum_{\pi_n\ni t_i<t}\nabla f(x({t_i}))\cdot(x(t_{i+1})-x(t_{i}))\longrightarrow F(t,x_t)\\
&&+&\sum_{\pi_n\ni t_i\in(t'_n,t_n]}\nabla f(x({t_i}))\cdot(x(t_{i+1})-x(t_{i})).
\end{alignat*}Define  $\underline{t_n}:=\min\{t_i>t'_n|t_i\in\pi_n\}$, $\overline{t_n}:=\min\{t_i>t_n|t_i\in\pi_n\}$ and note that $\overline{t_n}\geq\underline{t_n}\geq t$, hence  \begin{eqnarray*}|f(x(\overline{t_n}))-f(x(\underline{t_n}))|\longrightarrow 0.\end{eqnarray*} 
Applying a second order Taylor expansion to $f$ and using (\ref{eq:tight}), we obtain\begin{alignat*}{2}
\left|\sum_{\pi_n\ni t_i\in(t'_n,t_n]}\nabla f(x({t_i}))\cdot(x(t_{i+1})-x(t_{i}))\right|&\leq|f(x(\overline{t_n}))-f(x(\underline{t_n}))|\\
&+\text{const}|q_n(t_n)-q_n(t'_n)|\longrightarrow 0.
\end{alignat*}
\end{proof}

\begin{Rem}\label{rem:x_n}
If $x\in D_m$, so are $x_{T}$ and $x_{T-}$ and the corresponding piecewise constant approximation(s) in (\ref{eq:x_n}) shall be denoted by $(x_{T})^n$ and $(x_{T-})^n$.
\end{Rem}

The following property may be derived from \cite[Lemma 12.3]{PB} and \cite[VI]{JS}:
\begin{Lem}\label{lem:approx}
Let $T\geq 0$, $x\in D_m$, then $(x_{T})^n\stackrel{\textsc{J$_1$}}{\longrightarrow}x_{T}$.
\end{Lem}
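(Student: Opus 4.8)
The plan is to reduce the statement to the general fact that the right-endpoint piecewise-constant approximation of an arbitrary \cadlag path converges in the J$_1$ topology, and then to establish that fact by an explicit time change. First I would note that $x_{T}$ is itself an element of $D_m$: stopping a \cadlag path yields a \cadlag path, its jump set is contained in $J(x)\cap[0,T]$, and $x_T$ is constant (equal to $x(T)$) on $[T,\infty)$. Hence it suffices to show that for \emph{any} $y\in D_m$ one has $y^{n}\stackrel{\textsc{J$_1$}}{\longrightarrow}y$, where $y^{n}=\sum_{t_i\in\pi_n}y(t_{i+1})\I_{[t_i,t_{i+1})}$, and to apply this with $y=x_T$. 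Because $x_T$ is eventually constant, the half-line convergence in the metric $\d$ reduces, via the description of the J$_1$ topology on $D([0,\infty),\R^m)$ in \cite[VI]{JS}, to uniform-type control on a single compact interval $[0,S]$ with $S>T$.

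The main tool is the time-change characterization of J$_1$ convergence: $\d(y^{n},y)\to0$ follows once, for each such $S$, I produce strictly increasing homeomorphisms $\lambda_n$ of $[0,S]$ with $\|\lambda_n-\mathrm{id}\|_{\infty,[0,S]}\to0$ and $\sup_{s\le S}|y^{n}(s)-y(\lambda_n(s))|\to0$. The natural first observation is that $y^{n}=y\circ\gamma_n$ exactly, where $\gamma_n(s):=t_{i+1}$ for $s\in[t_i,t_{i+1})$ is the ``ceiling'' step map, and $\|\gamma_n-\mathrm{id}\|_{\infty}\le|\pi_n|\to0$. This already suggests closeness, but $\gamma_n$ is not continuous, so one cannot take $\lambda_n=\gamma_n$; the homeomorphism $\lambda_n$ must instead be chosen to mimic $\gamma_n$ while realigning the jumps.

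For the alignment I would fix $\epsilon>0$ and use that a \cadlag path has only finitely many jumps exceeding $\epsilon$ on $[0,S]$, say at $\tau_1<\dots<\tau_p$. For $n$ large these lie in distinct partition intervals, $\tau_\ell\in(t_{k_\ell},t_{k_\ell+1}]$, and the increment carrying the $\ell$-th large jump appears in $y^{n}$ as a jump located at $t_{k_\ell}$, i.e.\ shifted one partition point to the left of $\tau_\ell$. I would then take $\lambda_n$ to be the piecewise-linear homeomorphism pinned by $\lambda_n(t_{k_\ell})=\tau_\ell$ (and essentially the identity on a finite separating grid between the $\tau_\ell$), so that $|\lambda_n-\mathrm{id}|\le|\pi_n|$ and the large jumps of $y\circ\lambda_n$ and of $y^{n}$ occur at the same times. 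Off the jump intervals the $w'$-modulus of $y$ bounds the oscillation of $y$ on mesh-sized intervals by $\epsilon$ for large $n$, giving $\sup_{s\le S}|y^{n}(s)-y(\lambda_n(s))|\le C\epsilon$; letting $n\to\infty$ and then $\epsilon\downarrow0$ completes the argument. This realignment is exactly the content packaged in \cite[Lemma 12.3]{PB}.

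I expect the one genuinely delicate point to be precisely this jump realignment. Because the right-endpoint convention places each jump of $y^{n}$ at the partition point \emph{preceding} the true jump time, the identity time change (or any $\lambda_n$ not moving $t_{k_\ell}$ onto $\tau_\ell$) makes the pre-jump plateau of $y$ collide with the post-jump value carried by $y^{n}$, producing an error of the order of the jump size that does not vanish. Arranging $\lambda_n$ to transport $t_{k_\ell}$ to $\tau_\ell$ for all large jumps simultaneously, while staying within $|\pi_n|$ of the identity and keeping the small-jump and continuous part uniformly controlled, is the crux; the remaining estimates and the passage from $[0,S]$ to $[0,\infty)$ (automatic here, since $x_T$ is constant past $T$) are routine.
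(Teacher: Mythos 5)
Your argument is correct and is essentially the route the paper intends: the paper gives no written proof of this lemma, only the citation to \cite[Lemma 12.3]{PB} and \cite[VI]{JS}, and your time-change construction---realigning each large jump, which the right-endpoint convention displaces leftward by at most $|\pi_n|$, via a piecewise-linear homeomorphism within $|\pi_n|$ of the identity, with the $w'$-modulus controlling the remainder---is precisely what those references package. The reduction to a compact interval $[0,S]$ with $S>T$, using that $x_T$ is eventually constant, correctly handles the passage to the half-line.
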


\begin{Lem}\label{lem:approx2}
Let $(t,x)\in \Lambda$, $t_n\longrightarrow t$ and denote $t'_n:=\max\{t_i<t|t_i\in\pi_n\}$. Then\begin{alignat*}{3}
&(i)\qquad t_n\leq t'_n\Longrightarrow x^{n}_{t_n-}&\stackrel{\textsc{J$_1$}}{\longrightarrow}& & x_{t-}&,\\
&(ii)\qquad t_n<t'_n\Longrightarrow x^{n}_{t_n}&\stackrel{\textsc{J$_1$}}{\longrightarrow}& &x_{t-}&,\\
&(iii)\qquad t_n\geq t'_n\Longrightarrow x^{n}_{t_n}&\stackrel{\textsc{J$_1$}}{\longrightarrow}& &x_{t}&,\\
&(iv)\qquad t_n>t'_n\Longrightarrow x^{n}_{t_n-}&\stackrel{\textsc{J$_1$}}{\longrightarrow}& &x_{t}&.
\end{alignat*}
\end{Lem}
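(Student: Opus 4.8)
The plan is to reduce all four cases to the approximation result already available, Lemma~\ref{lem:approx}, by first rewriting each "approximate--then--stop" path $x^n_{t_n}$ or $x^n_{t_n-}$ as a "stop--then--approximate" path $(x_{s_n})^n$ for a suitable partition point $s_n\to t$, and then decoupling the two limits (vanishing mesh and moving endpoint). The starting observation is that $x^n$ is constant, equal to $x(t_{i+1})$, on each $[t_i,t_{i+1})$, so freezing it at $t_n$ or at $t_n-$ leaves its value on the interval containing $t_n$ unchanged. Checking separately the case where $t_n$ is interior to a partition interval and the case where $t_n\in\pi_n$, a direct computation gives the exact identities
$$ x^n_{t_n}=(x_{\tau_n})^n,\qquad x^n_{t_n-}=(x_{\sigma_n})^n,\qquad \tau_n:=\min\{t_i>t_n\mid t_i\in\pi_n\},\quad \sigma_n:=\min\{t_i\geq t_n\mid t_i\in\pi_n\}. $$

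Next I locate the limit. Writing $t''_n:=\min\{t_i\geq t\mid t_i\in\pi_n\}$, so that $t'_n<t\leq t''_n$ with $t'_n,t''_n\to t$ and no partition point lies strictly between $t'_n$ and $t''_n$, I read off the side of approach from the hypotheses. In (i) and (ii) the relevant endpoint satisfies $\sigma_n\leq t'_n$, resp.\ $\tau_n\leq t'_n$, hence $s_n<t$ with $s_n\to t$, so the candidate limit is $x_{t-}$; in (iii) and (iv) it satisfies $\tau_n\geq t''_n$, resp.\ $\sigma_n\geq t''_n$, hence $s_n\geq t$ with $s_n\to t$, so the candidate limit is $x_t$. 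Each of these comparisons uses only the absence of partition points in $(t'_n,t''_n)$.

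It then remains to prove, for partition points $s_n$, that $(x_{s_n})^n\stackrel{\textsc{J$_1$}}{\longrightarrow}x_{t-}$ when $s_n<t$, $s_n\to t$, and $(x_{s_n})^n\stackrel{\textsc{J$_1$}}{\longrightarrow}x_t$ when $s_n\geq t$, $s_n\to t$. I would compare $(x_{s_n})^n$ with the \emph{fixed} approximation $(x_{t-})^n$, resp.\ $(x_t)^n$. The two step functions coincide on an initial segment ending near $t$, and on the remaining set, which for large $n$ lies in a left- resp.\ right-neighbourhood of $t$, they take only values of the form $x(u)$ with $u$ a partition point converging to $t$ from the corresponding side, together with the constant $x(t-)$ resp.\ $x(t)$. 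By existence of the left limit $x(t-)=\lim_{u\uparrow t}x(u)$ (resp.\ right-continuity $x(t)=\lim_{u\downarrow t}x(u)$), all these values are uniformly within $\varepsilon$ of the constant for large $n$, so in fact the \emph{uniform} distance $\|(x_{s_n})^n-(x_{t-})^n\|_\infty$ (resp.\ $\|(x_{s_n})^n-(x_t)^n\|_\infty$) tends to $0$, whence $\d$ tends to $0$ since J$_1$ is coarser than the uniform topology (Remark~\ref{rem:uniform}). Finally Lemma~\ref{lem:approx} gives $(x_t)^n\stackrel{\textsc{J$_1$}}{\longrightarrow}x_t$ directly, and $(x_{t-})^n\stackrel{\textsc{J$_1$}}{\longrightarrow}x_{t-}$ upon applying the same lemma to the \cadlag path $x_{t-}$ stopped at any $T\geq t$ (which leaves it unchanged); the triangle inequality for $\d$ concludes.

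The main obstacle is bookkeeping rather than analysis: establishing the exact identities of the first step across all edge cases (in particular when $t_n\in\pi_n$ or when $t$ itself is a partition point), and then matching the discrepancy region in the third step to the correct one-sided regularity of $x$ — left limits for the cases converging to $x_{t-}$, right-continuity for those converging to $x_t$. Once the problem is recast through these identities, the vanishing-mesh limit and the moving-endpoint limit decouple cleanly and both reduce to Lemma~\ref{lem:approx}.
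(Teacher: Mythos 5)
Your proof is correct and follows essentially the same route as the paper's: both reduce the claim to a uniform-norm comparison between the approximate-then-stop path and the fixed stop-then-approximate path $(x_{t-})^n$ (resp.\ $(x_t)^n$), then invoke Lemma~\ref{lem:approx} together with the stability of J$_1$ convergence under uniformly vanishing perturbations. Your intermediate exact identities $x^n_{t_n}=(x_{\tau_n})^n$, $x^n_{t_n-}=(x_{\sigma_n})^n$ are a harmless extra bookkeeping step; the paper simply estimates $\|x^n_{t_n-}-(x_{t-})^n\|_\infty$ directly via the one-sided regularity of $x$.
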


\begin{proof}
Let $t_n\leq t'_n$, by Lemma~\ref{lem:approx}, we have $(x_{t-})^n\stackrel{\textsc{J$_1$}}{\longrightarrow}(x_{t-})$. Since $x$ is \cadlag we observe\begin{eqnarray*}
\|x^n_{t_n-}-(x_{t-})^n\|_{\infty}\leq\sup_{s\in[t_n,t'_n]}|x(t_n)-x(s)|+|x(t_n)-x(t-)|\longrightarrow 0
,\end{eqnarray*}and (i) follows immediately from \cite[VI.1.23]{JS}. (ii)-(iv) follow similar lines of proof.
\end{proof}

\begin{Thm}\label{thm:compare}Let $\Omega$ satisfy Assumption \ref{def:rich}. Then:
\begin{itemize}
\item[(i)] Every \textsc{J$_1$}-continuous functional is continuous.
\item[(ii)] There exists a continuous functional which is not \textsc{U}-continuous.
\item[(iii)] There exists  \textsc{U}-continuous functionals which are not continuous.
\end{itemize}
\end{Thm}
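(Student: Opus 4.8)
The overall plan is to treat the three parts separately, using Remark~\ref{rem:criteria} throughout: a functional is continuous (i.e.\ lies in $C_\pi(\Lambda)=\mathfrak{X}$) precisely when it satisfies the eight conditions of Definition~\ref{def:pi}.

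\emph{Part (i).} Let $F$ be \textsc{J$_1$}-continuous. I would verify the eight conditions directly. The four ``limit'' conditions Def.~\ref{def:pi}.1(a),(b) and 2(a),(b) are free: by Remark~\ref{rem:uniform} \textsc{J$_1$}-continuity implies \textsc{U}-continuity, and by Remark~\ref{rem:uniform2} every \textsc{U}-continuous functional already satisfies them. The four ``discrete'' conditions Def.~\ref{def:pi}.1(c),(d) and 2(c),(d) are exactly matched by Lemma~\ref{lem:approx2}: for instance, for 1(c), if $t_n\to t$ with $t_n\le t'_n$ then $x^n_{t_n-}\to x_{t-}$ in the \textsc{J$_1$} topology by Lemma~\ref{lem:approx2}(i), so $(t_n,x^n_{t_n-})\to(t,x_{t-})$ in the product \textsc{J$_1$} topology and \textsc{J$_1$}-continuity of $F$ yields $F(t_n,x^n_{t_n-})\to F(t,x_{t-})$. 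The cases 1(d), 2(c), 2(d) follow identically from parts (ii),(iii),(iv) of Lemma~\ref{lem:approx2}. Hence $F\in\mathfrak{X}=C_\pi(\Lambda)$.

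\emph{Part (ii).} Here I would simply assemble two earlier results. Take $F(t,x_t):=|[x](t)|$ (i.e.\ $f([x](t))$ with $f=|\cdot|$), or equivalently the \follmer\ integral $G(t,x_t):=\int_0^t 2x\,dx$; on the rich domains relevant here (where $\Omega\subset Q^{\pi}_m$, e.g.\ $Q^{\pi}_m,{Q^{\pi}_m}^+$) these are well defined, and by Example~\ref{eg:cont}(ii) (resp.\ (iv)) they lie in $C_\pi(\Lambda)$. On the other hand, Lemma~\ref{lem:compare} shows precisely under Assumption~\ref{def:rich} that neither is \textsc{U}-continuous: the piecewise-linear approximations $x^{(n)}\to x$ uniformly yet have vanishing quadratic variation (and vanishing \follmer\ integral). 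Thus $F$ is continuous but not \textsc{U}-continuous.

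\emph{Part (iii).} This is the substantive construction. Fix $t_0>0$ and set $F(t,x_t):=x_t(t_0-)$, the left-hand limit at the \emph{fixed} time $t_0$ of the stopped path (so $F(t,x_t)=x(t_0-)$ once $t\ge t_0$). First I would check $F$ is \textsc{U}-continuous: for a \textsc{U}-convergent sequence, if $x$ has a jump at $t_0$ then uniform convergence of the stopped paths forces the stopping times to satisfy $t_n\ge t_0$ eventually, and then $F(t_n,x^n_{t_n})=x^n(t_0-)\to x(t_0-)$ because uniform convergence controls left limits; the continuity-point cases are immediate. Next, by Assumption~\ref{def:rich} the continuous path $x_0\in\Omega$, and by stability under vertical perturbation (Def.~\ref{def:closed}(ii)) the path $y:=(x_0)_{t_0}+e\,\I_{[t_0,\infty)}\in\Omega$ for some $e\neq0$, which has a jump of size $e$ at $t_0$. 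The crux is that $y^n=\sum_i y(t_{i+1})\I_{[t_i,t_{i+1})}$ uses \emph{right}-endpoint values, so on the partition interval containing $t_0$ it already equals the post-jump value $x_0(t_0)+e$; hence $y^n(t_0-)=x_0(t_0)+e$ for all large $n$, whereas $y(t_0-)=x_0(t_0)$. Taking the constant sequence $t_n\equiv t_0$ (so $t_n\ge t'_n$), condition Def.~\ref{def:pi}.2(c) would require $F(t_0,y^n_{t_0})=y^n(t_0-)=x_0(t_0)+e\to F(t_0,y_{t_0})=x_0(t_0)$, which fails for $e\neq0$. Therefore $F\notin\mathfrak{X}=C_\pi(\Lambda)$.

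\emph{Main obstacle.} The delicate point is (iii): one must exhibit a functional that is genuinely \textsc{U}-continuous yet fails $\pi$-continuity, and the obvious candidates (value evaluation $x(t_0)$, running maximum, time-integrals) all turn out to be continuous, because conditions 2(c),(d) of Definition~\ref{def:pi} are tuned to the \emph{right}-endpoint (post-jump) values that $x^n$ delivers. The decisive observation is that reading a \emph{left} limit at a fixed time is incompatible with the anticipating placement of jumps in $x^n$; establishing this mismatch, while simultaneously confirming that $F$ really is \textsc{U}-continuous at the jump time itself, is the part requiring care. Parts (i) and (ii) are then bookkeeping on top of Lemma~\ref{lem:approx2} and of Example~\ref{eg:cont} together with Lemma~\ref{lem:compare}.
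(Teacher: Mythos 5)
Your proposal is correct. Parts (i) and (ii) follow the paper exactly: (i) combines Remarks~\ref{rem:uniform} and \ref{rem:uniform2} with Lemma~\ref{lem:approx2}, and (ii) cites Example~\ref{eg:cont} against Lemma~\ref{lem:compare}. Part (iii) is where you genuinely diverge: the paper fixes $t_0>0$ with $t_0\notin\cup_n\pi_n$ and uses the jump functional $F(t,x_t):=|\Delta x_t(t_0)|$, which fails Def.~\ref{def:pi}.2(c) because $x^n$ only jumps at partition points, so $\Delta x^n(t_0)\equiv 0$; you instead use the left-limit functional $F(t,x_t):=x_t(t_0-)$, which fails 2(c) because the right-endpoint (anticipating) construction of $y^n$ replaces the pre-jump value $y(t_0-)$ by the post-jump value. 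Both counterexamples are valid, and they probe complementary defects of the approximation $x^n$ relative to the uniform topology. Your version has the small advantage of not requiring $t_0$ to avoid $\cup_n\pi_n$, and you are more explicit than the paper in producing a path of $\Omega$ with a jump at $t_0$ (via Def.~\ref{def:closed}(ii) applied to the continuous path guaranteed by Assumption~\ref{def:rich} — note that $[x]_\pi$ continuous forces $x$ continuous by the decomposition \eqref{eq:qv}, which you use implicitly and could state). Two cosmetic points: your $F$ is $\R^m$-valued, so to match the scalar setting of Def.~\ref{def:pi} you should take a coordinate or a norm, e.g.\ $|x_t(t_0-)-x(t_0)|$; and your U-continuity argument can be shortened to the observation that $F$ factors through the stopped path and $y\mapsto y(t_0-)$ is continuous for the local uniform topology on $D_m$, with no case analysis on the stopping times needed.
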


\begin{proof}
If $F$ is J$_1$-continuous, then $F$ satisfies Def.~\ref{def:pi}.1(a),(b)  and 2(a),(b) due to Rem.~\ref{rem:uniform}  and \ref{rem:uniform2}. (i) now follows immediately from Lemma~\ref{lem:approx2}. (ii) is due to Example~\ref{eg:cont} and Lemma~\ref{lem:compare}. 

	It remains to show (iii). We first note that the U topology on $\Lambda$ is metrisable, hence sequential continuity is equivalent to continuity. Let us fix a $t_0>0$; $t_0\notin\cup_n\pi_n$, define \begin{eqnarray*}F(t,x_t):=|\Delta x_t(t_0)|\end{eqnarray*}on $\Lambda$. Observe that if $x_n\stackrel{\textsc{U}}{\longrightarrow}x$ in $D_m$ then it is well known that:
\begin{eqnarray}
\Delta x_n(s)\longrightarrow \Delta x(s)\label{eq:y_n}
\end{eqnarray}for  $s\geq 0$. In particular, if $t_n\longrightarrow t$; $x_n(\cdot\wedge{t_n})\stackrel{\textsc{U}}{\longrightarrow}x_t$ then (\ref{eq:y_n}) implies $\Delta x_n(\cdot\wedge{t_n})(s)\longrightarrow \Delta x_t(s)$ for $s\geq 0$, hence $F$ is U-continuous on $\Lambda$. 

	On the other hand, we take an $x\in \Omega_0$; $\Delta x(t_0)\neq 0$, it follows from our choice of $t_0$ that\begin{eqnarray*}
F(t_0,x^n_{t_0})=|\Delta x^n(t_0)|\equiv 0,
\end{eqnarray*}hence by Def.~\ref{def:pi}.2(c), $F$ is not continuous on $\Lambda$ and (iii) follows.\end{proof}
 So, if $\Omega$   satisfies Assumption \ref{def:rich}, Theorem \ref{thm:compare} and Remark~\ref{rem:uniform} imply that
\begin{itemize}
\item the $\pi-$topology is strictly finer than the \textsc{J$_1$} topology.
\item the $\pi-$topology and the \textsc{U} topology are not comparable.
\end{itemize}

\section{Smooth functionals}\label{sec:c1k}\noindent 

	 The change of variable formulas in \cite{HF} make use of the concepts of \emph{local boundedness} and the existence of a  \emph{modulus of continuity}. 
  In this section, we shall introduce weaker notions of \emph{boundedness} and \emph{modulus of continuity} for causal functionals  and define a corresponding notion of a $C^{1,2}$ functional on $\Lambda$, and use these notions to derive a functional change of variable formula. We then introduce  $\X(\Lambda)$ and $\M(\Lambda)$, two important subspaces of $C^{1,2}(\Lambda)$.
  
   When $\Omega\subset Q^{\pi}_m$, we will show that  functionals  such as quadratic variation and \follmer\  integrals are not only $C^{1,2}$ but also belong to class $\M$, a sub-class of \emph{infinitely differentiable} functionals.
	Recall the definition of Dupire's horizontal and vertical derivatives 
\cite{CF,CF09,BD}:
\begin{Def}[Horizontal derivative]\label{def:dt}
$F:\Lambda\longmapsto \R$ is called { differentiable in time} or \emph{horizontally differentiable} if the following limit exists for all $(t,x_t)\in\Lambda$:\begin{eqnarray*}
\D F(t,x_t):=\lim_{h\downarrow 0}\frac{F(t+h,x_t)-F(t,x_t)}{h}. 
\end{eqnarray*}\end{Def}

\begin{Def}[Vertical derivative]\label{def:dx}
$F:\Lambda\longmapsto \R$ is called \emph{vertically differentiable} if for every $(t,x_t)\in\Lambda$, the map
%U_t(x) is a set and the map is only defined up to U_t(x)!%
$f: \U_t(x)\longmapsto \R$: 
\begin{eqnarray*}e\longmapsto F\left(t,x_t+e\I_{[t,\infty)}\right)\end{eqnarray*} is differentiable at $0$.    $\nabla_{x}F(t,x_t):=\nabla_{e}f(0) $ is called the vertical derivative of $F$ at $(t,x_t)\in\Lambda$.
\end{Def}

%\begin{Rem}\label{rem:dtdx}If $F, G$ are differentiable in time (resp. space), then $\alpha F+\beta G$ and $FG$ are differentiable in time (resp. space).\end{Rem}

%\begin{Def}[Differentiable functionals]\label{def:dtdx}
\noindent $F$ is called  {differentiable} on $\Lambda$ if it is vertically and horizontally differentiable at every $(t,x)\in \Lambda$.
 We extend the above definitions to vector-valued maps $F:\Lambda\to \mathbb{R}^{d\times n}$ whose components $F_{i,j}$ satisfy the respective conditions.
\begin{Prop}\label{prop:causal}
A causal functional $F:\Lambda\to \mathbb{R}$ is strictly causal if and only if it is vertically differentiable   with vanishing vertical derivative.
\end{Prop}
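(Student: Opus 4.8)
The plan is to unwind both definitions and notice that, for a fixed $(t,x_t)\in\Lambda$, the two paths $x_{t-}$ and $x_t$ and all the vertical perturbations $x_{t-}+e\I_{[t,\infty)}$ lie on a single affine segment parametrised by the jump variable $e$, along which $F$ is precisely the one-variable object whose derivative is the vertical derivative. The computation I would record first is the elementary identity
\begin{eqnarray*}
x_t + e\I_{[t,\infty)} = x_{t-} + (\Delta x(t)+e)\I_{[t,\infty)},
\end{eqnarray*}
together with the observation that the perturbed path $y:=x_t+e\I_{[t,\infty)}$ has $y(t-)=x(t-)$, so that $y_{t-}=x_{t-}$ while $y_t=y$. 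Consequently the map $g(e):=F(t,x_{t-}+e\I_{[t,\infty)})$ satisfies $g(0)=F(t,x_{t-})=F_{-}(t,x_t)$ and $g(\Delta x(t))=F(t,x_t)$, and differentiating $g$ at any interior point of its domain is exactly the evaluation of a vertical derivative.

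For the implication \emph{strictly causal $\Rightarrow$ vanishing vertical derivative}, I would argue that strict causality forces $e\mapsto F(t,x_t+e\I_{[t,\infty)})$ to be constant. Indeed, since $(x_t+e\I_{[t,\infty)})_{t-}=x_{t-}$ for every $e$, the hypothesis $F=F_{-}$ gives $F(t,x_t+e\I_{[t,\infty)})=F(t,x_{t-})$ independently of $e$. A constant map is differentiable at $0$ with zero derivative, so $F$ is vertically differentiable with $\nabla_{x}F\equiv 0$. In this direction the generic structure of $\Omega$ (Definition~\ref{def:closed}(ii)) is used only to ensure the perturbations belong to $\Omega$, so that the map $\U_t$ is defined on a neighbourhood of $0$.

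For the converse, assuming $F$ vertically differentiable with $\nabla_{x}F\equiv 0$, I would prove $g$ is differentiable with $g'\equiv 0$ on a convex neighbourhood $V$ of $0$ and then conclude $g(0)=g(\Delta x(t))$, i.e. $F(t,x_{t-})=F(t,x_t)$. The key point is that for $z:=x_{t-}+e\I_{[t,\infty)}$ one has $z_t=z$ and $\U^z_t(\epsilon)=g(e+\epsilon)$, whence $g'(e)=\nabla_{x}F(t,z)=0$. By Remark~\ref{rem:closed} the set $V=-\U$ is a convex neighbourhood of $0$ containing $\Delta x(t)$ with $x_{t-}+e\I_{[t,\infty)}\in\Omega$ for all $e\in V$; restricting $g$ to the segment $\lambda\mapsto \lambda\,\Delta x(t)$, $\lambda\in[0,1]$, the chain rule and the mean value theorem yield $g(0)=g(\Delta x(t))$, which is the desired equality $F_{-}(t,x_t)=F(t,x_t)$.

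The only genuinely substantive point, and the main obstacle, is the converse: passing from a \emph{pointwise} vanishing vertical derivative to \emph{equality of the two endpoint values} requires that the entire segment joining $x_{t-}$ to $x_t$ remain inside the domain, so that the vanishing derivative can be integrated along a curve connecting $e=0$ to $e=\Delta x(t)$. This is exactly what the generic (vertical-perturbation-stable) structure of $\Omega$ supplies via Remark~\ref{rem:closed}; without convexity of the perturbation neighbourhood one could not reach $\Delta x(t)$ from $0$ within $V$, so verifying that the jump is reachable inside $V$ is the step that deserves care.
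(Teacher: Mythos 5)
Your proof is correct and follows essentially the same route as the paper: the implication from vanishing vertical derivative to strict causality via the mean value theorem along the segment from $x_{t-}$ to $x_t$ (using the convex neighbourhood $-\U$ from Remark~\ref{rem:closed}), and the reverse implication from the constancy of $e\mapsto F(t,x_t+e\I_{[t,\infty)})$, which follows since $(x_t+e\I_{[t,\infty)})_{t-}=x_{t-}$. You merely spell out the mean-value-theorem step that the paper compresses into a single sentence, correctly identifying the convexity of the perturbation neighbourhood as the point that makes it work.
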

\begin{proof}
The first assertion follows from the mean value theorem. To prove the converse, let $x \in\Omega$ and put $z:=x_t+e\I_{[t,\infty)}$ then $z_{t-}=x_{t-}$ and \begin{eqnarray*}
 \text{$F(t,x_{t}+e\I_{[t,\infty)})$}=F(t,z_t)=F_{-}(t,z_{t})=F_{-}(t,x_{t})=F(t,x_{t}),
\end{eqnarray*}by the strict causality of $F$ (Def.~\ref{def:causal}).
\end{proof}

\begin{Def}[Locally bounded functional]\label{def:bound}\noindent\\
$F:\Lambda\to \mathbb{R}$ is called \emph{locally bounded} if for every $x\in \Omega$ and $T\geq 0$, there exists $ n_0\geq N_{T}(x)$ such that the family of maps \begin{alignat*}{2}
(&t\longmapsto F(t,x^{n}_{t}), n\geq n_0)\end{alignat*}  is locally bounded on $[0,T]$.\end{Def}

%\begin{Rem}\label{rem:bound}
%If $F, G$ are locally bounded, then $\alpha F+\beta G$ and $FG$ are locally bounded.\end{Rem}

\begin{Lem}\label{prop:bound}
Every continuous function on $\Lambda$ is locally bounded.
\end{Lem}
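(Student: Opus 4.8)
The plan is to argue by contradiction, converting the boundedness requirement of Def.~\ref{def:bound} into a statement about sequences to which the defining properties of the $\pi$-topology (Def.~\ref{def:pi}) can be applied. Since $[0,T]$ is compact, a family of functions is locally bounded on $[0,T]$ exactly when it is uniformly bounded there; so if $F\in C_\pi(\Lambda)$ fails to be locally bounded, the negation of Def.~\ref{def:bound} produces a path $x\in\Omega$ and a horizon $T\geq 0$ such that, for \emph{every} $n_0\geq N_T(x)$, one has $\sup_{n\geq n_0}\sup_{t\in[0,T]}|F(t,x^n_t)|=\infty$.

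From this I would extract, by induction on $k$, a strictly increasing sequence of indices $n_k\geq N_T(x)$ together with times $s_k\in[0,T]$ such that $|F(s_k,x^n_{s_k})|>k$ at $n=n_k$; here $n_k\geq N_T(x)$ guarantees, via Def.~\ref{def:closed}(i) and stability under stopping, that each $(s_k,x^{n_k}_{s_k})$ genuinely lies in $\Lambda$, so that $F$ is defined there. By compactness of $[0,T]$, pass to a subsequence with $s_k\to s$ for some $s\in[0,T]$, and set $s'_k:=\max\{t_i<s\mid t_i\in\pi_{n_k}\}$. Passing to a further subsequence, I may assume that either (A) $s_k\geq s'_k$ for all $k$, or (B) $s_k<s'_k$ for all $k$ (the latter forcing $s>0$, since $s'_k>0$).

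The crucial step is to feed this sparse subsequence into the continuity hypotheses Def.~\ref{def:pi}.2(c) and Def.~\ref{def:pi}.1(d), which govern precisely the quantity $F(t_n,x^n_{t_n})$ appearing in Def.~\ref{def:bound} but are phrased for full sequences indexed by $n$. I would therefore complete $(n_k,s_k)$ to a sequence $(t_n)_{n\geq N_T(x)}$ with $t_{n_k}=s_k$ by choosing, for the remaining indices, $t_n:=t'_n$ in case (A) and $t_n\in(t''_n,t'_n)$ in case (B), where $t'_n:=\max\{t_i<s\mid t_i\in\pi_n\}$ and $t''_n$ is the point of $\pi_n$ immediately below $t'_n$ (the latter interval being nonempty for all large $n$). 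Because the mesh vanishes, $t'_n\to s$ and $t''_n\to s$, hence $t_n\to s$ while $t_n\geq t'_n$ is preserved in case (A) and $t_n<t'_n$ in case (B). Then Def.~\ref{def:pi}.2(c) (case A) or Def.~\ref{def:pi}.1(d) (case B) applies to the full sequence and yields $F(t_n,x^n_{t_n})\to F(s,x_s)$, respectively $F(s,x_{s-})$, a finite limit. Restricting to $n=n_k$ gives $F(s_k,x^{n_k}_{s_k})\to$ a finite value, contradicting $|F(s_k,x^{n_k}_{s_k})|>k\to\infty$.

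The main obstacle, and the only genuinely delicate point, is exactly this passage from a subsequence of partitions to a full sequence: the properties in Def.~\ref{def:pi} pin down the diagonal limit of $F(t_n,x^n_{t_n})$ only when the time sequence is defined for every $n$ and respects the sign of $t_n-t'_n$, so the completion must be engineered to satisfy $t_n\to s$ and the correct inequality simultaneously. The edge case $s=0$ (where $t'_n\equiv 0$ by convention so only case (A) can occur) should be recorded separately but is immediate from Def.~\ref{def:pi}.2(c). Everything else is routine once this scaffolding is in place.
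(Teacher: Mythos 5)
Your proof is correct and follows essentially the same route as the paper's: argue by contradiction, extract a blow-up subsequence $(n_k,s_k)$ with $|F(s_k,x^{n_k}_{s_k})|>k$, pass to a convergent subsequence of times, split according to the sign of $s_k-t'_{n_k}$, and invoke Def.~\ref{def:pi}.1(d) or 2(c). Your additional step of completing the sparse subsequence to a full sequence $(t_n)$ so that the hypotheses of Def.~\ref{def:pi} apply verbatim is a technical point the paper's proof leaves implicit, but it does not change the approach.
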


\begin{proof}
Let $F$ be continuous; if $F$ is not locally bounded, there exists $x\in \Omega$, $T\geq 0$, and a sub-sequence $(n_k)$; \begin{eqnarray}|F(t_{n_k}, x^{n_k}_{t_{n_k}})|>k,\quad\forall k\geq 1;\label{eq:bound}\end{eqnarray}$(t_{n_k})$ is bounded on $[0,T]$. 
	For ease of notation, assume $t_{n_k}\longrightarrow t\in[0,T]$ without  passing through to a sub-sequence. Observe that one can always choose another sub-sequence, bounded (either above or below) by $t'_{n_k}=\max\{t_i<t|t_i\in\pi_{n_k}\}$. Since $F$ is continuous, if $t_{n_k}<t'_{n_k}$ (resp. $t_{n_k}\geq t'_{n_k}$), then Def.~\ref{def:pi}.1(d) (resp. 2(c)) would contradict (\ref{eq:bound}) as $k\uparrow\infty$.\end{proof}

\begin{Lem}\label{lem:bound}Let $F$ be locally bounded and denote $F_-(t,x)=F(t,x_{t-})$. \begin{itemize}
\item[(i)] If $F$ is left-continuous then $F_{-}$ is locally bounded.
\item[(ii)] If $F$ is left-continuous then $t\longmapsto F_{-}(t,x_{t})$ is locally bounded.
\item[(iii)] If $F$ is right-continuous then $t\longmapsto F(t,x_t)$ is locally bounded.
\end{itemize}
\end{Lem}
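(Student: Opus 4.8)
The plan is to reduce all three statements to the single uniform estimate supplied by local boundedness of $F$. Fixing $x\in\Omega$ and $T\geq 0$, Definition~\ref{def:bound} furnishes $n_0\geq N_T(x)$ with $M_T:=\sup_{n\geq n_0}\sup_{s\in[0,T]}|F(s,x^n_s)|<\infty$ (on the compact $[0,T]$ a locally bounded family is uniformly bounded), and since $n_0\geq N_T(x)$ we have $x^n_T\in\Omega$, hence $(s,x^n_s)=(s,(x^n_T)_s)\in\Lambda$ for all $s\in[0,T]$ and $n\geq n_0$. Every assertion will be obtained by transporting $M_T$, via the limit axioms of Definition~\ref{def:pi}, to the quantity of interest.

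For (i) the object to control is $F_-(r,x^n_r)=F(r,x^n_{r-})$, uniformly in $r\in[0,T]$ and $n\geq n_0$. The tempting route is to use that $x^n$ is piecewise constant, so that $x^n_{r-}=x^n_{\sigma}$ with $\sigma=\max\{t_i<r:t_i\in\pi_n\}$, and then compare $F(r,x^n_\sigma)$ with the controlled value $F(\sigma,x^n_\sigma)$; this, however, introduces an uncontrolled horizontal increment (the time argument $r$ runs ahead of the freezing time $\sigma$) that left-continuity alone does not tame uniformly in $n$. The clean observation, and the crux of the whole lemma, is that left-continuity directly expresses $F(r,x^n_{r-})$ as a limit of the very quantities that are already bounded: applying Definition~\ref{def:pi}.1(b) to the path $x^n_T\in\Omega$ at time $r\in(0,T]$ gives $F(r,x^n_{r-})=\lim_{s\uparrow r,\,s<r}F(s,x^n_s)$, whence $|F(r,x^n_{r-})|\leq M_T$ for every $n\geq n_0$; the endpoint $r=0$ reduces to $F(0,x^n_0)$ and is already covered. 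This yields $\sup_{n\geq n_0}\sup_{r\in[0,T]}|F_-(r,x^n_r)|\leq M_T$, i.e. local boundedness of $F_-$.

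For (ii) and (iii) the path $x$ is fixed and the targets are the scalar functions $t\mapsto F(t,x_{t-})$ and $t\mapsto F(t,x_t)$ on $[0,T]$; here I would transfer the bound from the approximations $x^n$ to $x$ itself by evaluating the convergence axioms along a well-chosen sequence. For (iii), fixing $t\in[0,T]$ and taking the constant sequence $t_n\equiv t$ (so $t_n\geq t'_n$ since $t'_n<t$, with the convention $t'_n=0$ at $t=0$), Definition~\ref{def:pi}.2(c) gives $F(t,x^n_t)\to F(t,x_t)$, and passing to the limit in $|F(t,x^n_t)|\leq M_T$ yields $|F(t,x_t)|\leq M_T$. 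For (ii) I would invoke part (i): with $M_T$ now the bound on $F_-$ just obtained, choose $t_n=t'_n\uparrow t$ (so $t_n\leq t'_n$) and use Definition~\ref{def:pi}.1(c) to get $F(t'_n,x^n_{t'_n-})\to F(t,x_{t-})$, where $|F(t'_n,x^n_{t'_n-})|=|F_-(t'_n,x^n_{t'_n})|\leq M_T$, so $|F(t,x_{t-})|\leq M_T$; the single point $t=0$, where $x_{0-}=x_0$, contributes only the finite value $F(0,x_0)$ and does not affect boundedness on the compact $[0,T]$. (Alternatively, (ii) can be done without (i) by invoking Definition~\ref{def:pi}.1(d) along a sequence $t_n<t'_n$, again bounding $F(t_n,x^n_{t_n})$ by $M_T$.)

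The main obstacle is isolating the correct mechanism in (i): one must resist relating $x^n_{r-}$ to a grid-stopped path and bridging a horizontal time gap — which would effectively demand right-continuity, unavailable here — and instead read $F(r,x^n_{r-})$ off as the left-limit in Definition~\ref{def:pi}.1(b), manifestly dominated by the family bound $M_T$. Once (i) is in place, (ii) and (iii) are routine limit-passages along $t_n=t'_n$ and $t_n\equiv t$ respectively, closely paralleling the contradiction argument used for Lemma~\ref{prop:bound}.
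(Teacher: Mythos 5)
Your proposal is correct and follows essentially the same route as the paper: bound $|F(t,x^n_t)|$ uniformly on $[0,T]$ for large $n$, then transport the bound via Definition~\ref{def:pi}.1(b) for (i), 1(c)/(d) for (ii), and 2(c) for (iii). The only cosmetic difference is that you derive (ii) through (i) using 1(c) along $t_n=t'_n$, whereas the paper applies 1(d) directly with $t_n<t'_n$ — a variant you also note as an alternative.
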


\begin{proof}
Since $F$ is locally bounded, there exists a constant $K>0$ such that \begin{eqnarray*}|F(t,x^n_{t})|\leq K\end{eqnarray*}for all $t\leq T$ and all $n$ sufficiently large. If $F$ is left-continuous, then Def.~\ref{def:pi}.1(b) implies\begin{eqnarray*}\text{K}\geq\lim_{s\uparrow t; s<t}|F(s,x^n_{s})|=|F(t,x^n_{t-})|,\end{eqnarray*} so (i) follows. If $t_n\longrightarrow t; t_n<t'_n$, then by the left-continuity of $F$ (i.e. Def.~\ref{def:pi}.1(d)), \begin{eqnarray*}\text{K}\geq |F(t_n,x^{n}_{t_n})|\longrightarrow |F(t,x_{t-})|,\end{eqnarray*} so (ii) follows. If $F$ is right-continuous, then by Def.~\ref{def:pi}.2(c),\begin{eqnarray*}\text{K}\geq |F(t'_n,x^{n}_{t'_n})|\longrightarrow |F(t,x_{t})|,\end{eqnarray*} so (iii) follows.\end{proof}

%The essence of modulus is that give any T,r, and x not necessarily simple step function; we would like to get a modulus that holds on a "neighborhood of x" i.e. the (x^{n}) for all large n. (check example 4.12)

\begin{Def}[Modulus of vertical continuity]\label{def:space modulus}\noindent\\
We say that a function $F$ on $\Lambda$ admits a modulus of vertical continuity if for every $x\in \Omega$, $T\geq 0$ and $r>0$ there exists an increasing function $\omega:\R_{+}\longmapsto\R_{+}$ with $\omega(0+)=0$;
\begin{eqnarray}
|F(t,x^{n}_{t-}+a\I_{[t,\infty)})-F(t,x^{n}_{t-}+b\I_{[t,\infty)})|\leq\omega(|a-b|).\label{eq:space modulus}\end{eqnarray}for all $a, b\in \U_{t-}(x^n)\cap\overline{B}_{r}(0)$, $t\leq T$ and sufficiently large $n$.\end{Def}

\begin{Eg}\label{eg:modulus}
Let $f\in C(\R_{+}\times\R^{m})$. Then $F:\Lambda\to \mathbb{R}$ defined by\\ $F(t,x_t):=f(t,x(t))$ admits a modulus of vertical continuity.
\begin{proof}
For a given $x\in \Omega$ and $T\geq 0$, $r>0$, put $\|x\|_{T}:=\sup_{t\leq T}|x(t)|$, $r_0:=\alpha\|x\|_{T}+r$; $\alpha>1$, then $f$ is uniform continuous on $[0,T]\times \overline{B}_{r_0}(0)$ and a modulus of continuity of $f$ on $[0,T]\times \overline{B}_{r_0}(0)$ is given by\begin{eqnarray*}
\omega(\delta):=\sup_{|t-s|+|u-v|\leq\delta}\left|f(t,u)-f(s,v)\right|
\end{eqnarray*}which satisfies (\ref{eq:space modulus}).
\end{proof}
\end{Eg}

\begin{Rem}\label{rem:modulus}
If $F, G$ admit moduli of vertical continuity, then $\alpha F+\beta G$ admits a modulus. If in addition, $F_{-}, G_{-}$ are locally bounded, then $FG$ admits a modulus of vertical continuity.
\end{Rem}

\begin{Lem}\label{lem:modulus}
Let $F$ be  vertically differentiable and $(\nabla_{x}F)_{-}$ be locally bounded, if $\nabla_{x}F$ admits a modulus of vertical continuity then so does $F$.
\end{Lem}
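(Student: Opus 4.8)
The plan is to fix $x\in\Omega$, $T\geq 0$ and $r>0$ once and for all and to produce a single \emph{affine} modulus $\omega(\delta)=M\delta$ for $F$, where the constant $M$ is assembled from the local bound on $(\nabla_{x}F)_{-}$ and the modulus of $\nabla_{x}F$. First I would pass to an $n$ large enough that three things hold simultaneously: $x^{n}_T\in\Omega$ together with the vertical-perturbation neighbourhood of Def.~\ref{def:closed}(ii) and Rem.~\ref{rem:closed}; the family $t\mapsto(\nabla_{x}F)_{-}(t,x^{n}_{t})=\nabla_{x}F(t,x^{n}_{t-})$ is bounded by some constant $K$ on $[0,T]$ (Def.~\ref{def:bound} applied to $(\nabla_{x}F)_{-}$); and the modulus inequality for $\nabla_{x}F$ holds with some increasing $\omega_G$, $\omega_G(0+)=0$ (Def.~\ref{def:space modulus} applied to $\nabla_{x}F$). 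Since each requirement is imposed only for $n$ beyond a finite threshold, any sufficiently large $n$ works.

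Next, for a fixed $t\leq T$ I would introduce the scalar function $\phi(e):=F(t,x^{n}_{t-}+e\I_{[t,\infty)})$, defined for $e$ in the convex neighbourhood $\U_{t-}(x^{n})$ of $0$ (so that $x^{n}_{t-}+e\I_{[t,\infty)}\in\Omega$). The key identification is that $\phi$ is differentiable with $\nabla\phi(e)=\nabla_{x}F(t,x^{n}_{t-}+e\I_{[t,\infty)})$: writing $z:=x^{n}_{t-}+e\I_{[t,\infty)}$, this path is already stopped at $t$, so its vertical perturbation is $z_t+h\I_{[t,\infty)}=x^{n}_{t-}+(e+h)\I_{[t,\infty)}$, whence $\U_t(h)=\phi(e+h)$ and the vertical differentiability of $F$ (Def.~\ref{def:dx}) gives exactly $\nabla\phi(e)=\nabla_{x}F(t,z_t)$.

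Then, given $a,b\in\U_{t-}(x^{n})\cap\overline{B}_{r}(0)$, I would apply the mean value theorem to $\phi$ along the segment $[b,a]$. This segment lies in $\U_{t-}(x^{n})\cap\overline{B}_{r}(0)$ because both sets are convex, so $\phi$ is differentiable at every point of it, and reducing to one dimension via $s\mapsto\phi(b+s(a-b))$ yields $\theta\in(0,1)$ with
\begin{eqnarray*}
\phi(a)-\phi(b)=\nabla_{x}F\big(t,x^{n}_{t-}+\xi\I_{[t,\infty)}\big)\cdot(a-b),\qquad \xi:=b+\theta(a-b),
\end{eqnarray*}
where $|\xi|\leq r$. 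It remains to bound the gradient uniformly. Since $0\in\U_{t-}(x^{n})\cap\overline{B}_{r}(0)$ and $\xi$ lies in the same set, the triangle inequality, the local bound at $\xi=0$, and the modulus of $\nabla_{x}F$ between $\xi$ and $0$ give
\begin{eqnarray*}
\big|\nabla_{x}F(t,x^{n}_{t-}+\xi\I_{[t,\infty)})\big|\leq\big|\nabla_{x}F(t,x^{n}_{t-})\big|+\omega_G(|\xi|)\leq K+\omega_G(r)=:M,
\end{eqnarray*}
a constant independent of $t$, $a$, $b$ and (large) $n$. Hence $|\phi(a)-\phi(b)|\leq M|a-b|$, i.e. $\omega(\delta):=M\delta$ is a modulus of vertical continuity for $F$.

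The only genuinely delicate point is the second paragraph: one must be sure that the vertical derivative of $F$ at the \emph{perturbed} base point $x^{n}_{t-}+e\I_{[t,\infty)}$ is precisely $\nabla\phi(e)$, so that the mean value theorem can be fed with values of $\nabla_{x}F$ rather than of some unrelated object, and that the whole segment stays inside the convex perturbation neighbourhood on which $F$ is defined and differentiable. Everything else is bookkeeping of the thresholds in $n$ and the elementary one-dimensional mean value theorem.
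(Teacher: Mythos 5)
Your proof is correct and follows essentially the same route as the paper's (one-line) argument: apply the mean value theorem to $e\mapsto F(t,x^{n}_{t-}+e\I_{[t,\infty)})$ along the segment from $b$ to $a$, then bound the gradient at the intermediate point by the local bound $K$ on $(\nabla_{x}F)_{-}$ plus $\omega_G(r)$ from the modulus of $\nabla_{x}F$, yielding the affine modulus $(\omega_G(r)+K)\,|a-b|$ that the paper writes as $(\omega(r)+\text{const})|a-b|$. Your version simply makes explicit the details the paper leaves implicit (convexity of the perturbation neighbourhood, identification of $\nabla\phi$ with the vertical derivative at the perturbed point, and the choice of $n$).
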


\begin{proof}
Since $F$ is vertically differentiable   and $\nabla_{x}F$ admits a modulus of vertical continuity $\omega$, by the mean value theorem and the local boundedness of $(\nabla_{x}F)_{-}$, we obtain\begin{eqnarray*}
|F(t,x^{n}_{t-}+a\I_{[t,\infty)})-F(t,x^{n}_{t-}+b\I_{[t,\infty)})|\leq\left(\omega(r)+\text{const}\right)|a-b|.\end{eqnarray*}
\end{proof}

\begin{Def}[${C}^{1,2}$ functionals]\label{def:CC12}\noindent\\
We define $C^{1,2}(\Lambda)$ as the set of continuous functionals $F\in C_\pi(\Lambda)$ such that $\D F, \nabla_{x}F$ and $\nabla^2_{x}F$ are defined on $\Lambda$ and\begin{itemize}
    \item [(i)] $\D F$  is right-continuous and locally bounded.
    \item [(ii)]  $(\nabla_{x}F)_{-}$ is left-continuous,
    \item[(iii)] $(\nabla^2_{x}F)_{-}$ is left-continuous, locally bounded and admits a modulus of   vertical continuity.\end{itemize}
    If in addition, 
$(\nabla_x F)_{-}$ is locally bounded, then we denote $F\in C_{b}^{1,2}(\Lambda)$.
\end{Def}

We now introduce two classes of functionals which, as we will observe later, play a special role in the context of stochastic analysis:
\begin{Def}[Class $\X$]\label{def:smoothX}\noindent\\
A continuous and differentiable functional $F$ is of \emph{class $\X$} if $\D F$ is right-continuous and locally bounded, $\nabla_{x}F$ is left-continuous and strictly causal.  We denote by $\X(\Lambda)$ the vector space of class $\X$ functionals.
\end{Def}

\begin{Def}[Class $\M$]\label{def:smooth}\noindent\\
A   functional $F\in \X(\Lambda)$ is of \emph{class $\M$} if $\D F=0$. We denote
$\M(\Lambda)$ the set of \emph{class $\M$} functionals and
$\M_{b}(\Lambda)$ the set of   functionals $F\in\M(\Lambda) $ whose vertical derivative $\nabla_x F$ is locally bounded.
\end{Def}

\begin{Rem}
Every functional of class $\M$ is infinitely differentiable by Prop.~\ref{prop:causal}. 
\end{Rem}
%\begin{Prop}\label{prop:algebra}\noindent
Remarks      \ref{rem:modulus}, Lemma~\ref{lem:bound} and \ref{lem:modulus} imply that
$C^{1,2}(\Lambda)$, $\X(\Lambda)$, $\M(\Lambda)$, $\M_{b}(\Lambda)$ are vector spaces; $C_{b}^{1,2}(\Lambda)$ is an algebra. 
%\end{Prop}

\begin{Lem}\label{lem:phi}\noindent\\
Let $\Omega\subset Q^{\pi}_m$. If $\phi:\Lambda\longmapsto \R^{m\times m}$ is such that $\phi_{-}$ is left-continuous and locally bounded, then \begin{eqnarray*}(t,x_t)\in \Lambda \mapsto F(t,x_t):=\int_{0}^t\phi(s,x_{s-})d[x](s)\end{eqnarray*} is a continuous functional.
\end{Lem}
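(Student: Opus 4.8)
The goal is to show that $F(t,x_t):=\int_0^t \phi(s,x_{s-})\,d[x](s)$ is continuous, i.e. that it belongs to $\mathfrak{X}$ (Remark~\ref{rem:criteria}), which means verifying all eight conditions of Definition~\ref{def:pi}. The strategy is to reduce the four discrete (partition-based) conditions 1(c),(d) and 2(c),(d) to the convergence result of Corollary~\ref{cor:qs}, and to dispatch the four pathwise-limit conditions 1(a),(b) and 2(a),(b) by the fact that $t\mapsto\int_0^t\phi(s,x_{s-})\,d[x](s)$ is a well-defined \cadlag{} function of bounded variation, using Remark~\ref{rem:qv}.

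\textbf{The discrete conditions via Corollary~\ref{cor:qs}.} I would treat the representative case 2(c): suppose $t_n\to t$ with $t_n\geq t'_n$, and I must show $F(t_n,x^n_{t_n})\to F(t,x_t)$. The key observation is that on the approximating path $x^n$ one has $[x^n]=q_n$, so that
\begin{eqnarray*}
F(t_n,x^n_{t_n})=\int_0^{t_n}\phi(s,x^n_{s-})\,d[x^n](s)=\sum_{\pi_n\ni t_i< t_n}\big\langle \phi(t_i,x^n_{t_i-}),(x(t_{i+1})-x(t_i))(x(t_{i+1})-x(t_i))'\big\rangle,
\end{eqnarray*}
where I have used that $[x^n]=q_n$ charges only the partition points. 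Now I split the sum at $t'_n$ into the part over $t_i<t$ (where we may write $t_i<t'_n$, giving a genuine Riemann-type quadratic sum) and the residual part over $t_i\in(t'_n,t_n]$. For the main part, I set $f_n(s):=\phi(s,x^n_{s-})$ and $f(s):=\phi(s,x_{s-})$; by Lemma~\ref{lem:approx2} we have $x^n_{s-}\to x_{s-}$ in $J_1$ for the relevant stopped paths, so left-continuity and local boundedness of $\phi_-$ give that $(f_n)$ is locally bounded and converges pointwise to $f$, and Corollary~\ref{cor:qs}(i) yields convergence to $\int_0^t\phi(s,x_{s-})\,d[x](s)=F(t,x_t)$. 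For the residual part, I bound its Frobenius norm by $\mathrm{const}\cdot|q_n(t_n)-q_n(t'_n)|$, which tends to $0$ by the tightness estimate \eqref{eq:tight}, exactly as in the proof of Example~\ref{eg:cont}(iii). The cases 1(c),(d) and 2(d) follow along identical lines, choosing the index set of the sum ($t_i<T$ versus $t_i\le T$) and whether to evaluate $\phi$ at $t_i$ or $t_{i+1}\wedge T$ to match the appropriate variant of Corollary~\ref{cor:qs}, and using the corresponding clause of Lemma~\ref{lem:approx2} to identify the $J_1$-limit of the stopped approximations.

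\textbf{The pathwise conditions.} Conditions 1(a),(b),2(a),(b) only concern the behaviour of $t\mapsto F(t,x_t)$ and $t\mapsto F(t,x_{t-})$ along the fixed path $x$ as $s$ tends to $t$ from either side. Here I use Remark~\ref{rem:qv}: $t\mapsto\int_0^t\phi(s,x_{s-})\,d[x](s)$ lies in $BV$ with Lebesgue decomposition whose jump at $t$ is $\langle\phi(t-,x_{t-}),\Delta x(t)\Delta x(t)'\rangle$. Consequently the integral functional is right-continuous with left limits in $t$, and since the integrand $\phi(s,x_{s-})$ is evaluated at $s-$ the integral is insensitive to the value of the path at the upper endpoint; this makes 1(a),(b) (left limits equal $F(t,x_{t-})$) and 2(a),(b) (right continuity with $F(t,x_t)$) routine, the required matchings of left/right evaluation following because stopping at $t-$ simply removes the final jump $\langle\phi(t-,x_{t-}),\Delta x(t)\Delta x(t)'\rangle$.

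\textbf{Main obstacle.} The delicate point is the pointwise convergence $\phi(t_i,x^n_{t_i-})\to\phi(s,x_{s-})$ needed to apply Corollary~\ref{cor:qs}: one must argue that as $n\to\infty$ and partition points $t_i\in\pi_n$ approach a fixed $s$, the stopped approximating paths $x^n_{t_i-}$ converge in $J_1$ to $x_{s-}$ so that left-continuity of $\phi_-$ can be invoked, and that the sequence $f_n(s)=\phi(s,x^n_{s-})$ is locally bounded uniformly in $n$ on $[0,T]$. This is precisely the content that Lemma~\ref{lem:approx2} and the local boundedness hypothesis on $\phi_-$ are designed to supply, but marrying the \emph{discrete} index $t_i$ of the quadratic sum to the \emph{continuous} argument required by the left-continuity of $\phi_-$ — and verifying that the residual sum over $(t'_n,t_n]$ is genuinely negligible via \eqref{eq:tight} — is where the care is needed.
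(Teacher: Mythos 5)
Your overall architecture matches the paper's: dispose of conditions 1(a),(b) and 2(a),(b) via the Lebesgue--Stieltjes structure of $t\mapsto\int_0^t\phi(s,x_{s-})d[x]$ (Remark~\ref{rem:qv}, Lemma~\ref{lem:cont}(i)), and reduce the discrete conditions to Corollary~\ref{cor:qs} after splitting off a residual sum over $(t'_n,t_n]$ bounded by $\mathrm{const}\,|q_n(t_n)-q_n(t'_n)|$. However, there is a genuine gap in the key step. You set $f_n(s):=\phi(s,x^n_{s-})$ and claim $f_n\to f$ pointwise with $f(s)=\phi(s,x_{s-})$, citing Lemma~\ref{lem:approx2}. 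For a \emph{fixed} $s$ one has $s>s'_n=\max\{t_i<s\,|\,t_i\in\pi_n\}$, so the applicable clause of Lemma~\ref{lem:approx2} is (iv), which gives $x^n_{s-}\to x_{s}$ in the J$_1$ topology, \emph{not} $x_{s-}$; moreover, passing to the limit in $\phi$ along such a sequence is governed by the right-continuity condition Def.~\ref{def:pi}.2(d) for $\phi_-$, which is not among your hypotheses ($\phi_-$ is only assumed left-continuous). Thus the pointwise convergence $f_n(s)\to\phi(s,x_{s-})$ is unjustified, and it fails exactly where it matters: the atoms of $d[x]$ sit at the jump times of $x$, and the proof of Corollary~\ref{cor:qs} (via Lemma~\ref{lem:vague3} and dominated convergence on the discrete part) requires the convergence of $f_n$ precisely at those atoms. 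As written, Corollary~\ref{cor:qs} cannot be invoked with your $f_n$.

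The repair is the device the paper uses, and which your ``main obstacle'' paragraph already gestures at without implementing: replace $f_n$ by the \emph{left-shifted} step function $\phi_n(s):=\phi(t_0,x^{n}_{t_0-})\I_{\{0\}}(s)+\sum_{t_i\in\pi_n}\phi(t_i,x^{n}_{t_i-})\I_{(t_i,t_{i+1}]}(s)$. For fixed $s\in(t_i,t_{i+1}]$ the evaluation point is $t_i=s'_n$, which satisfies $t_i\leq s'_n$ and $t_i\uparrow s$, so Lemma~\ref{lem:approx2}(i) together with the left-continuity condition Def.~\ref{def:pi}.1(c) for $\phi_-$ yields $\phi_n(s)\to\phi(s,x_{s-})$; the Riemann sum then reads $\sum\langle\phi_n(t_{i+1}\wedge T),\cdot\rangle$, i.e.\ variant (ii) of Corollary~\ref{cor:qs}. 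The remainder of your argument --- the residual bound via \eqref{eq:tight} and the treatment of the pathwise conditions --- is sound and coincides with the paper's.
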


\begin{proof}
Since $t\longmapsto \phi(t,x_{t-})$ is left-continuous and locally bounded (Lemma~\ref{lem:cont}(i)) and that $t\longmapsto [x_i,x_j](t)$ is in $BV$, \cadlag with $\Delta [x_i,x_j]\equiv\Delta x_i\Delta x_j$ (Prop.~\ref{prop:qv}), it follows $F$ is a finite sum of Lebesgue-Stieltjes integrals and satisfies conditions Def.~\ref{def:pi}.1(a),(b)  and 2(a),(b). For the other conditions in Def.~\ref{def:pi}, it is suffice to assume $t_n\longrightarrow t$; $t_n\geq t'_n$ (i.e. the other criteria follow similar lines). Define\begin{eqnarray*}\phi_{n}(s):=\phi(t_0,x^{n}_{t_{0}-})\I_{\{0\}}(s)+\sum_{t_i\in \pi_n}\phi(t_i,x^{n}_{t_{i}-})\I_{(t_i,t_{i+1}]}(s),\end{eqnarray*}which is a $\R^{m\times m}$-valued left-continuous function on $\R_{+}$. By the local boundedness of $\phi_{-}$, we see that $\exists n_0\geq N(x)$; $(\phi_{n})_{n\geq n_0}$ is locally bounded on $\R_{+}$ and converges pointwise to $s\longmapsto\phi(s,x_{s-})$ on $\R_{+}$. By Cor.~\ref{cor:qs}(ii), we obtain \begin{alignat*}{2}
&F(t_n,x^n_{t_n})&=&\int_{0}^{t_n}\phi(s,x^n_{s-})d[x^n]\\
&&=&\sum_{\pi_n\ni t_i<t}\langle\phi(t_i,x^{n}_{t_{i}-}),(x(t_{i+1})-x(t_{i})(x(t_{i+1})-x(t_{i})'\rangle\longrightarrow F(t,x_t)\\
&&+&\sum_{\pi_n\ni t_i\in(t'_n,t_n]}\langle\phi(t_i,x^{n}_{t_{i}-}),(x(t_{i+1})-x(t_{i})(x(t_{i+1})-x(t_{i})'\rangle.\end{alignat*}Since $q_n\stackrel{\textsc{J$_1$}}{\longrightarrow}[x]$ and by \cite[s4.2]{CC}, the last term is bounded by
\begin{eqnarray*}
\text{const}|q_n(t_n)-q_n(t'_n)|\longrightarrow 0.\end{eqnarray*}
\end{proof}

 As we shall see in the following examples,  path-independent  functionals of class $\M$ are simply affine   functions, but in the path-dependent case this class includes many examples, in particular \follmer\  integrals. 
\begin{Eg}\label{eg:markov}\noindent\\
Let $\SS_m\subset\Omega$, $f\in C^{1,2}(\R_{+}\times\R^{m})$ and \begin{eqnarray*}F(t,x_t):=f(t,x(t)),\end{eqnarray*}then $F$ is of class $\M$ iff $f(t,u)=\alpha+\beta.u$ for some constants $\alpha\in\R, \beta\in\R^{m}$
\end{Eg}
\begin{proof}
For the if part: We can write $f(t,u)=\alpha+\beta\cdot u$ and hence \begin{eqnarray*}F(t,x_t)=\alpha+\beta x(t)\end{eqnarray*}on $\Lambda$ for some constants $\alpha\in\R$, $\beta\in\R^{m}$. By Example~\ref{eg:cont}(i) and computing the derivatives of $F$, we see that $F$ is of class $\M$. Conversely, from Def.~\ref{def:smooth} and Prop.~\ref{prop:causal}, we first obtain\begin{itemize}
    \item[(i)] $\partial_t f(t,x(t))=\D F(t,x_t)=0$,
    %\item[(ii)] $\partial_{x}f(t,x(t))=\nabla_{x}F(t,x_t)=\nabla_{x}F(t,x_{t-})=\partial_{x}f(t,x(t-))$,
    \item[(ii)] $\nabla^{2}f(t,x(t))=\nabla^{2}_{x}F(t,x_t)=0$,
\end{itemize}$\forall$ $t\geq 0$, $x\in \Omega$. Since $\SS_m\subset\Omega$, we have \begin{eqnarray*}
R:=\{(t,x(t))|t\in\R_{+}, x\in\Omega\}=\R_{+}\times\R^{m},
\end{eqnarray*}hence $\partial_t f\equiv\nabla^{2}f\equiv 0$ on $\R_{+}\times\R^{m}$. By the mean value theorem, we deduce that $\nabla f\equiv\beta$ on $R$, for some $\beta\in\R^{m}$.
\end{proof}

\begin{Rem}
The condition $\SS_m\subset\Omega$ may be weakened to simply requiring that $R\subset \R_{+}\times\R^{m}$ is convex. In this case, the converse statement holds on  $R$.
\end{Rem}

\begin{Eg}[Path-dependent examples]\label{eg:smooth}Let $\Omega\subset Q^{\pi}_m$, $\phi:\Lambda\longmapsto \R^{m\times m}$ such that $\phi_{-}$ is left-continuous and locally bounded, $f=(f_1,...,f_m)\in C^{2}(\R^{m})$. Then the functionals
\begin{itemize}
    \item[(i)]  $F(t,x_t):=\int_{0}^t\phi(s,x_{s-})d[x]$,
    \item[(ii)] $F(t,x_t):=\int_{0}^t (\nabla f\circ x) dx$,
    \item[(iii)]$F(t,x_t):=\sum_{i=1}^m\left(\int_{0}^t (x_i(t)-x_i(s))f_i(x_i(s)) dx_i(s)-\int_{0}^t(f_i\circ x_i) d[x_i]\right)$
\end{itemize} belong to $C_{b}^{1,2}(\Lambda)$ and (ii) and (iii) are of class $\M_b$.\end{Eg}

\begin{proof}
The functional in (iii) is well defined, since\begin{alignat}{2}F(t,x_t)=\sum_{i}\left(x_i(t)\int_{0}^t f_i\circ x_idx_i- \int_{0}^t x_if_i\circ x_i dx_i-\int_{0}^tf_i\circ x_i d[x_i]\right).\label{eq:smooth}
\end{alignat} The first two integrals in (\ref{eq:smooth}) are \follmer\  integrals, defined as a limit of Riemann sums along $\pi$, while the last one is a Lebesgue-Stieltjes integral. Continuity of $F$ in (i), (ii) and (iii) follows from Lemma~\ref{lem:phi} and Example~\ref{eg:cont}. Since $\D F\equiv 0$ in all cases, let us first compute $\nabla_{x}^{k}F$ for $k=1,2$ and demonstrate that $F$ possesses the required properties. In case of (i), we have \begin{alignat*}{2}\nabla_{x}F(t,x_t)=(\phi+\phi')(t,x_{t-})\Delta x(t), \quad\nabla_{x}^{2}F(t,x_t)=(\phi+\phi')(t,x_{t-}),\end{alignat*} which are left-continuous, locally bounded and $\nabla_{x}^{2}F$ is strictly causal, so by Prop.~\ref{prop:causal}, Lemma~\ref{lem:bound}(ii) and \eqref{lem:modulus}, $F$ is $C_{b}^{1,2}$. In case of (ii), we obtain \begin{alignat*}{2}\nabla_{x}F(t,x_t)=\nabla f(x(t-)),\end{alignat*}which is left-continuous, locally bounded and strictly causal, hence $F$ is of class $\M_b$. In case of (iii), we apply $\nabla_{x}$ to (\ref{eq:smooth}) and verify that
\begin{alignat}{2}\nabla_{x_i}F(t,x_t)&=\int_{0}^tf_i\circ x_idx_i-f_i(x_i(t-))\Delta x_i(t)\nonumber\\
&=\left(\int f_i\circ x_idx_i\right)(t-).\label{eq:smooth2}
\end{alignat} Applying $f(x):=\int_{0}^{x_i} f_i(\lambda)d\lambda$; $x\in \R^{m}$ to (ii) and by Prop.~\ref{prop:bound} and Lemma~\ref{lem:bound}(i), we see that each $\nabla_{x_i}F$ is left-continuous and locally bounded and so is $\nabla_{x}F$. Since $\nabla_{x}F$ is strictly causal, $F$ is of class $\M_b$.
\end{proof}

\section{Pathwise integration and change of variable formulas }\label{sec:ito}\noindent 

	We now discuss pathwise integration for causal functionals along paths in a generic domain. In contrast to rough integration theory \cite{frizshekhar} and the one form approach i.e \cite{HF}, \cite{CF} \& \cite{CP2019}, we define integrals as \emph{uncompensated} left Riemann sums, when such limits exist and form a continuous functional.  

	We then obtain change of variable formulas and an analogue of the classical Fundamental theorem of calculus for functionals of class $\M$. For paths that possess quadratic variation, we obtain a functional \follmer-\ito formula which extends \cite[Theorem 4]{CF}.
	
	In particular, we show that pathwise integral is of class $\M$ and that functionals of class $\M$ are \emph{primitives} i.e.  are representable as pathwise integrals, a  fact that facilitates the \emph{computation} of pathwise integrals, as in classical calculus. 

\begin{Lem}\label{lem:ftc}\noindent\\
Let $F$ be a left-continuous functional, differentiable in time, if $\D F$ is right-continuous and locally bounded, then \begin{eqnarray}
F(t,x_{s})-F(s,x_{s})=\int_{s}^{t}{\D F}(u,x_{u})du,
\end{eqnarray}for all $x\in \Omega$, $t\geq s\geq 0$.
\end{Lem}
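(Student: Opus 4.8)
The plan is to reduce the statement to a one-dimensional fundamental theorem of calculus for right derivatives, applied to the real function obtained by freezing the path at time $s$ and letting time run. Fix $x\in\Omega$ and $s\ge 0$ and set $y:=x_s$. Since $y$ is constant on $[s,\infty)$, we have $(y)_u=y$ and $(y)_{u-}=y$ for every $u>s$, so $(u,y_u)=(u,x_s)\in\Lambda$ for $u\ge s$ and the integrand is understood along this frozen path: $\D F(u,x_u)=\D F(u,(x_s)_u)=\D F(u,x_s)$. I therefore introduce the real functions $g(u):=F(u,x_s)$ and $\varphi(u):=\D F(u,x_s)$ on $[s,\infty)$ and aim to prove $g(t)-g(s)=\int_s^t\varphi(u)\,du$.

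First I would record the regularity of $g$ and $\varphi$. By Definition \ref{def:dt}, $g$ has at each $u$ a finite right derivative equal to $\varphi(u)$; in particular $g$ is right-continuous. Left-continuity of $g$ on $(s,\infty)$ follows from the left-continuity of $F$ (Definition \ref{def:pi}.1(a)) together with $(x_s)_{u-}=x_s$, so $g$ is continuous on $[s,\infty)$. Since $\D F$ is right-continuous, $\varphi$ is right-continuous (Definition \ref{def:pi}.2(a) along the path $x_s$), and by Lemma \ref{lem:bound}(iii), applied to the right-continuous, locally bounded functional $\D F$ along $x_s$, the map $\varphi$ is locally bounded; hence $u\mapsto\int_s^u\varphi$ is well defined and Lipschitz on compacts.

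The core step is the passage from the pointwise right derivative to the integral identity. Set $h(u):=g(u)-\int_s^u\varphi(r)\,dr$, which is continuous on $[s,\infty)$. Because $\varphi$ is right-continuous, the right derivative of $u\mapsto\int_s^u\varphi$ equals $\varphi(u)$, so $h$ has right derivative $\varphi(u)-\varphi(u)=0$ at every point. I then invoke the elementary fact that a \emph{continuous} function on an interval whose right derivative vanishes identically is constant, proved by fixing $\epsilon>0$, considering $c:=\sup\{u\ge s:|h(u)-h(s)|\le\epsilon(u-s)\}$, showing via right-differentiability at $c$ that $c$ cannot be interior, and letting $\epsilon\downarrow 0$. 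This yields $h\equiv h(s)=g(s)$, which is exactly $F(t,x_s)-F(s,x_s)=\int_s^t\D F(u,x_u)\,du$.

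I expect the main obstacle to be conceptual rather than computational: one must use the full continuity of $g$, not merely its right-continuity, in the vanishing-right-derivative step. Indeed, a right-continuous step function has vanishing right derivative without being constant, so the argument genuinely requires the left-continuity of $F$ to supply left-continuity of $g$, while right-differentiability supplies right-continuity; the right-continuity and local boundedness of $\D F$ are in turn what make $\int_s^\cdot\varphi$ both well defined and right-differentiable with derivative $\varphi$. These four inputs are precisely the hypotheses of the lemma, and the elementary constancy lemma is where they are cashed in.
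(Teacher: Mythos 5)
Your proof is correct and follows essentially the same route as the paper's: freeze the path as $z:=x_s$, observe that $g(u)=F(u,x_s)$ is right-differentiable with right derivative $\D F(u,x_s)$ and is continuous by combining this with the left-continuity of $F$ (Lemma~\ref{lem:cont}), note that $\D F(\cdot,x_s)$ is right-continuous and locally bounded, and conclude by a fundamental-theorem-of-calculus argument for right derivatives. The only difference is that where the paper cites a strengthened FTC as a black box, you supply the underlying elementary constancy lemma (a continuous function with vanishing right derivative is constant) together with its standard supremum proof, which is exactly what that citation amounts to.
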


\begin{proof}
Put $z:=x_s\in \Omega$, then $z_t=x_s$ for $t\geq s$ and $z_{t-}=x_s$ for $t>s$. Define $f(t):=F(t,x_s)$ for $t\geq s$, then $f(t)=F(t,z_t)$ on $[s,\infty)$ and $f(t)=F(t,z_{t-})$ on $(s,\infty)$. Since $F$ is differentiable in time, $f$ is right differentiable (hence right-continuous) on $[s,\infty)$ and the right derivative $f'(t)$ is $\D F(t,x_s)$ on $[s,\infty)$. Since $F$ is left-continuous, it follows from Lemma~\ref{lem:cont} that $f(t)=F(t,z_{t-})$ is left-continuous on $(s,\infty)$, hence we have first established that $f$ is continuous on $[s,\infty)$. Next, we observe that \begin{eqnarray*}f'(u)=\D F(u,x_s)=\D F(u,z_{u})\end{eqnarray*}on $[s,\infty)$. The right continuity of $\D F$ and Lemma~\ref{lem:cont} implies that $f'$ is right-continuous on $[s,\infty)$. Since $\D F$ is right-continuous and locally bounded, it follows from Lemma\ref{lem:bound}(ii) that \begin{eqnarray*}u\longrightarrow \D F(u,z_{u})\end{eqnarray*} is locally bounded. Hence, $f'$ is right-continuous and bounded on $[s,T]$, hence Riemann integrable. We can conclude using a stronger version \cite{FTC} of the Fundamental theorem of calculus.
\end{proof}

\begin{Lem}\label{lem:qs}
Let $\phi$ be a right-continuous and locally bounded on $\Lambda$, then
\begin{alignat*}{2}
\sum_{\pi_n\ni t_i \leq T}\int_{t_i}^{t_{i+1}}\phi(t,x^{n}_{t_{i}})dt\longrightarrow\int_0^{T}\phi(t,x_t)dt,
\end{alignat*}for all $x\in \Omega$, $T \geq 0$.
\end{Lem}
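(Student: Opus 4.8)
The plan is to rewrite the Riemann-type sum on the left as a genuine Lebesgue integral of the map $t\mapsto\phi(t,x^n_t)$, and then pass to the limit by bounded convergence, using the right-continuity axiom Def.~\ref{def:pi}.2(c) for the pointwise limit and local boundedness for the domination. The first and decisive observation is an identity for the stopped piecewise-constant path: since $x^n$ (see \eqref{eq:x_n}) is constant and equal to $x(t_{i+1})$ on each $[t_i,t_{i+1})$, one checks directly that $x^n_{t_i}=x^n_t$ as paths in $D_m$ whenever $t\in[t_i,t_{i+1})$. Consequently $\phi(t,x^n_{t_i})=\phi(t,x^n_t)$ on that interval, and writing $\tau_n:=\max\{t_i\in\pi_n:t_i\le T\}$ and $\sigma_n:=\min\{t_i\in\pi_n:t_i>\tau_n\}$, the left-hand side collapses to
\begin{equation*}
\sum_{\pi_n\ni t_i\le T}\int_{t_i}^{t_{i+1}}\phi(t,x^n_{t_i})\,dt=\int_0^{\sigma_n}\phi(t,x^n_t)\,dt,
\end{equation*}
since the intervals $[t_i,t_{i+1})$ with $t_i\le T$ tile $[0,\sigma_n)$.

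For the pointwise limit I would fix $t\in[0,T]$ and apply Def.~\ref{def:pi}.2(c) to the \emph{constant} sequence $t_n\equiv t$: this sequence trivially satisfies $t_n\to t$ and $t_n=t>t'_n=\max\{t_i<t:t_i\in\pi_n\}$, so the right-continuity of $\phi$ gives $\phi(t,x^n_t)\to\phi(t,x_t)$ for every $t$. Domination is supplied by Definition~\ref{def:bound}: enlarging the horizon to $T+1$, there are $n_0$ and $K$ with $|\phi(t,x^n_t)|\le K$ for all $t\le T+1$ and $n\ge n_0$ (note $\sigma_n\le T+1$ for large $n$). Moreover, by stability of $\Omega$ under piecewise-constant approximation (Def.~\ref{def:closed}(i)) we have $x^n_{T+1}\in\Omega$ for large $n$, so $t\mapsto\phi(t,x^n_t)$ is \cadlag on $[0,T+1]$ by Lemma~\ref{lem:cont}(ii), hence Borel measurable; the limit $t\mapsto\phi(t,x_t)$ is likewise \cadlag and bounded by Lemma~\ref{lem:bound}(iii). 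Bounded convergence then yields $\int_0^T\phi(t,x^n_t)\,dt\to\int_0^T\phi(t,x_t)\,dt$.

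It remains to dispose of the boundary overshoot. Since $\tau_n\le T<\sigma_n$ and $\sigma_n-\tau_n\le|\pi_n|$, we have $\sigma_n-T\le|\pi_n|\to0$, whence
\begin{equation*}
\left|\int_0^{\sigma_n}\phi(t,x^n_t)\,dt-\int_0^{T}\phi(t,x^n_t)\,dt\right|\le K\,(\sigma_n-T)\longrightarrow0.
\end{equation*}
Combining the two displays gives the claim. The only genuinely delicate points, which I would treat with care, are the path identity $x^n_{t_i}=x^n_t$ (on which the whole reduction hinges) and the justification that the approximating integrands are well-defined and integrable — this is precisely where the genericity of $\Omega$, through $x^n_{T+1}\in\Omega$, is needed so that Lemma~\ref{lem:cont} applies. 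The convergence itself is then an immediate consequence of the right-continuity axiom rather than of any quantitative estimate, which is what makes the argument short.
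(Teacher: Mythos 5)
Your proof is correct and follows essentially the same route as the paper's: both hinge on the identity $x^n_{t_i}=x^n_t$ for $t\in[t_i,t_{i+1})$, use Def.~\ref{def:pi}.2(c) for the pointwise limit, local boundedness for domination, and Lemma~\ref{lem:cont} for measurability before applying dominated convergence. You are in fact slightly more careful than the paper in explicitly estimating the overshoot $\int_T^{\sigma_n}$ coming from the last partition interval, which the paper's proof passes over silently.
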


\begin{proof}
Define \begin{alignat*}{2}
{\phi}_{n}(t):=\sum_{\pi_n\ni t_i\leq T}\phi(t,x^{n}_{t_{i}})\I_{[t_i,t_{i+1})}(t)=\sum_{\pi_n\ni t_i\leq T}\phi(t,x^{n}_t)\I_{[t_i,t_{i+1})}(t).
\end{alignat*}By the local boundedness of $\phi$, we see that $\exists n_0\geq N(x)$; $({\phi}_n)_{n\geq n_0}$ is locally bounded on $[0,T]$. Since $\phi$ is right-continuous, it follows from Lemma~\ref{lem:cont} that $t\longmapsto{\phi}_n(t)$ is right-continuous (hence measurable) on $[0,T]$ and from Def.\ref{def:pi}.2(c) that ${\phi}_n$ converges to $t\longmapsto{\phi}(t,x_t)$ pointwise on $[0,T]$. and (i) follows from dominated convergence.
\end{proof}

\begin{Cor}\label{cor:DF}
Let $\phi$ be a right-continuous and locally bounded $\Lambda$, then \begin{eqnarray*}(t,x_t)\longmapsto\int_{0}^{t}\phi(s,x_s) ds\end{eqnarray*} is continuous.
\end{Cor}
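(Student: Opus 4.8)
The plan is to show directly that $G(t,x_t):=\int_0^t\phi(s,x_s)\,ds$ belongs to $\mathfrak{X}$, i.e.\ that it satisfies all eight conditions of Definition~\ref{def:pi}; by Remark~\ref{rem:criteria} this is equivalent to $G\in C_\pi(\Lambda)$. First I would record two elementary facts. Since $\phi$ is right-continuous and locally bounded, Lemma~\ref{lem:cont}(ii) and Lemma~\ref{lem:bound}(iii) show that $s\mapsto\phi(s,x_s)$ is right-continuous and locally bounded, hence measurable and integrable on compacts; consequently $t\mapsto G(t,x_t)$ is (absolutely) continuous in $t$. Second, because the integration is with respect to Lebesgue measure $ds$, a single time carries no mass: since $(x_{t-})_r=x_r=(x_t)_r$ for every $r<t$, one has $G(t,x_{t-})=G(t,x_t)$, so $G$ exhibits no jump along any path.

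These two facts immediately dispatch the four ``continuity'' conditions 1(a),(b) and 2(a),(b): each one-sided limit of $G(s,x_s)$ or $G(s,x_{s-})$ as $s\to t$ equals $G(t,x_t)=G(t,x_{t-})$, by continuity of $t\mapsto G(t,x_t)$ combined with $G(s,x_{s-})=G(s,x_s)$.

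The core of the argument is the four discrete conditions 1(c),(d) and 2(c),(d), which all reduce to a single computation. The key observation is that, because $x^n$ is constant on each $[t_i,t_{i+1})$, stopping $x^n$ at any $r\in[t_i,t_{i+1})$ yields the same path as stopping at $t_i$, i.e.\ $(x^n)_r=x^n_{t_i}$ there; the same holds for the left-stopped path $x^n_{t_n-}$ up to the single time $t_n$. Hence, writing $\lfloor r\rfloor_n:=\max\{t_i\le r\}$,
\begin{eqnarray*}
G(t_n,x^n_{t_n})=G(t_n,x^n_{t_n-})=\int_0^{t_n}\phi\bigl(r,x^n_{\lfloor r\rfloor_n}\bigr)\,dr,
\end{eqnarray*}
which is precisely the integral of the step integrand $\phi_n$ appearing in the proof of Lemma~\ref{lem:qs}. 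Splitting $\int_0^{t_n}=\int_0^{t}+\int_t^{t_n}$, the boundary term is bounded by $K|t_n-t|\to0$ using local boundedness of $\phi$, while $\int_0^t\phi_n\to\int_0^t\phi(r,x_r)\,dr$ by Lemma~\ref{lem:qs}. Since this common limit equals both $G(t,x_t)$ and $G(t,x_{t-})$, all four discrete conditions follow at once, regardless of whether the approximating times are stopped at $t_n$ or $t_n-$ and of their position relative to $t'_n$.

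I expect the main obstacle to be the bookkeeping that identifies $G$ evaluated on the stopped piecewise-constant approximation with the Riemann-sum integrand of Lemma~\ref{lem:qs}: one must check carefully that stopping $x^n$ inside a partition cell does not change the stopped path, that left- and right-stopping at $t_n$ produce the same integral (the jump being Lebesgue-null), and that the moving upper limit $t_n$ may be replaced by the fixed $t$ at the cost of a vanishing boundary term, so that the fixed-$T$ statement of Lemma~\ref{lem:qs} applies. Once this identification is in place the convergence is routine.
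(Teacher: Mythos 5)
Your proposal is correct and follows essentially the same route as the paper's (very terse) proof: continuity of $t\mapsto\int_0^t\phi(s,x_s)\,ds$ disposes of conditions 1(a),(b), 2(a),(b), and the discrete conditions 1(c),(d), 2(c),(d) reduce to Lemma~\ref{lem:qs} together with local boundedness of $\phi$ to absorb the boundary term between $t_n$ and $t$. Your additional bookkeeping (stopping $x^n$ inside a cell, and the Lebesgue-null difference between left- and right-stopping) correctly fills in the details the paper leaves implicit.
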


\begin{proof}
The path $t\longmapsto \int_0^{t}\phi(s,x_s) ds$ is continuous. The rest follows from the local boundedness of $\phi$ and Lemma~\ref{lem:qs}.
\end{proof}

\begin{Def}[Pathwise integrability]\label{def:pathwise}\noindent\\
Let $\phi:\Lambda\longmapsto \R^{m}$ such that $\phi_{-}$ is left-continuous. For every $x\in\Omega$, define \begin{eqnarray}\label{eq:discrete}\II(t,x^n_{t}):=\sum_{\pi_n\ni t_i\leq t}\phi(t_i,x^n_{t_i-})\cdot(x(t_{i+1})-x(t_i)).\end{eqnarray} 
$\phi$ is said to be $\Lambda-$\emph{integrable} if
\begin{itemize}
    \item the limit $\II(t,x_t):=\lim_{n} \II(t,x^n_{t})$ exists for each $(t,x_t)\in \Lambda$, and
     \item the map $\II:\Lambda \mapsto \mathbb{R}$ is continuous.
\end{itemize}
%$\II$ is called the \emph{pathwise integral}.
\end{Def}
Note that the pathwise integral is defined as a limit of (left) Riemann sums, and not compensated Riemann sums as in rough path theory \cite{FrizHairer,frizshekhar}.
One case in which such Riemann sums are known to converge is for gradients of $C^2$ functions along paths of finite quadratic variation:

%\begin{Eg}
%All continuous functionals $\phi$ with continuous bounded variation  $t\longmapsto \phi(t,x_t)$ are integrable. For example, $\phi(t,x_t):=f(\int_{0}^{t}x(s)ds)$, where $f\in C^{1}$.\end{Eg}
\begin{Eg} Let $\Omega= Q^\pi_m$.
Then by the results of \cite{HF}, for any $f\in C^2(\mathbb{R}^m),$  $\phi:\Lambda\longmapsto \R^{m}$ defined by $\phi(t,x)=\nabla_x f(t,x_t)$ is  $\Lambda$-integrable and $\II(t,x)$ is the \follmer\  integral \cite{RC}. Note that the continuity property of $\II$ is a consequence (and indeed, the main motivation) of the construction of the $\pi-$topology in Section \ref{sec:pi}.
\end{Eg}
\begin{Thm} \label{thm:exist}Let $\phi:\Lambda\longmapsto\R^{m}$ such that $\phi_{-}$ is left-continuous and $\II$ the integration map defined as in (\ref{eq:discrete}). If for every $x\in\Omega$, $T>0$   the sequence of step functions on $[0,T]$\begin{eqnarray*}
g_n(t):=\II(t,x^n_{t}),
\end{eqnarray*}is a Cauchy sequence in $(D[0,T],\d)$, then $\phi$ is $\Lambda$-integrable.
\end{Thm}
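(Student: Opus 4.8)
The plan is to use completeness of the Skorokhod space under $\d$ to turn the $\textsc{J}_1$-Cauchy hypothesis into a genuine $\textsc{J}_1$ limit, and then to read off both requirements of Definition~\ref{def:pathwise} from it. Fix $x\in\Omega$ and $T>0$. Since $(D[0,T],\d)$ is complete and $(g_n)$ is $\d$-Cauchy, there is a \cadlag $g=g^{x}$ with $g_n\stackrel{\textsc{J$_1$}}{\longrightarrow}g$; by uniqueness these limits are consistent across $T$, so they assemble into one \cadlag path $g^x$ on $\R_+$. I will set $\II(t,x_t):=g^x(t)$, once I have verified that this agrees with the pointwise limit $\lim_n\II(t,x^n_t)=\lim_n g_n(t)$ demanded by the first bullet of Definition~\ref{def:pathwise}.

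First I would prove $g_n(t)\to g(t)$ for every $t$. At continuity points of $g$ this is immediate from $\textsc{J}_1$ convergence. The remaining points lie among the jump times of $x$; there I would split $g_n(t)$ into the partial sum over partition points strictly below $t'_n$ and the single term indexed by $t'_n$, namely $\phi(t'_n,x^n_{t'_n-})\cdot(x(\underline{t_n})-x(t'_n))$, where $\underline{t_n}$ is the next partition point. Since $t'_n\uparrow t$ and $\underline{t_n}\downarrow t$, right continuity of $x$ gives $x(\underline{t_n})-x(t'_n)\to\Delta x(t)$, Lemma~\ref{lem:approx2}(i) gives $x^n_{t'_n-}\stackrel{\textsc{J$_1$}}{\longrightarrow}x_{t-}$, and left continuity of $\phi_-$ then forces this term to converge to $\phi(t,x_{t-})\cdot\Delta x(t)$, which by $\textsc{J}_1$ convergence of the jumps is the jump $\Delta g(t)$. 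The partial sum is the value of $g_n$ just before its jump at $t'_n$, hence converges to $g(t-)$; adding the two gives $g(t-)+\Delta g(t)=g(t)$ (when $t\in\pi_n$ for infinitely many $n$ the only change is an extra term indexed by $t$, which tends to $0$ by right continuity of $x$). Thus $\lim_n g_n(t)=g(t)$ for all $t$ and $\II(t,x_t)=g^x(t)$ is well defined.

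It then remains to check that $\II$ is continuous, i.e.\ $\II\in\mathfrak{X}$ (Remark~\ref{rem:criteria}), by verifying the eight conditions of Definition~\ref{def:pi}. The discrete conditions are statements about $g_n$ alone, since by construction $\II(t_n,x^n_{t_n})=g_n(t_n)$ and $\II(t_n,x^n_{t_n-})=g_n(t_n-)$; for $t_n\to t$, whether the jump of $g_n$ located at the partition point $t'_n$ (the one capturing $\Delta x(t)$) has been passed is governed exactly by the sign of $t_n-t'_n$. Hence the $\textsc{J}_1$ convergence $g_n\to g$ together with the evaluation rules of \cite[s4.2]{CC} give $g_n(t_n)\to g(t)=\II(t,x_t)$ when $t_n\geq t'_n$ (condition 2(c)) and $g_n(t_n)\to g(t-)=\II(t,x_{t-})$ when $t_n<t'_n$ (condition 1(d)), with the left-limit versions $g_n(t_n-)$ yielding 1(c) and 2(d). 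The path conditions follow from the fact that $s\mapsto\II(s,x_s)=g^x(s)$ is \cadlag with $\II(s,x_{s-})=g^x(s-)$: right continuity gives 2(a), left continuity of the left-limit process gives 1(a), and $\lim_{s\downarrow t}g^x(s-)=g^x(t)$, $\lim_{s\uparrow t}g^x(s)=g^x(t-)$ give 2(b) and 1(b). Continuity of $\II$, hence $\Lambda$-integrability of $\phi$, follows.

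I expect the main obstacle to be the second and third paragraphs: $\textsc{J}_1$ convergence of the step functions $g_n$ controls their values only at continuity points of the limit, so upgrading it to pointwise convergence everywhere — and to the jump-sensitive conditions 1(c),(d) and 2(c),(d) — genuinely requires the structural fact that the discretization registers the jump of $x$ at $t$ at the single partition point $t'_n\uparrow t$. Making the correspondence between the side of $t'_n$ on which $t_n$ falls and the selected value in $\{g(t-),g(t)\}$ rigorous and uniform in the approximating sequences is the delicate point, and it is precisely the left continuity of $\phi_-$ together with Lemma~\ref{lem:approx2} that make the boundary term converge correctly.
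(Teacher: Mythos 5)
Your overall strategy is the paper's own: completeness of $(D[0,T],\d)$ produces a \textsc{J}$_1$ limit $G$, the jump of $g_n$ sitting at the partition point $t'_n$ is identified with $\phi(t,x_{t-})\cdot\Delta x(t)$ via left-continuity of $\phi_-$, and the conditions of Definition~\ref{def:pi} are then read off from the evaluation rules for $g_n(t_n)$ and $g_n(t_n-)$. Two steps, however, are asserted where a complete argument must do more work. First, ``which by \textsc{J}$_1$ convergence of the jumps is the jump $\Delta g(t)$'' hides the uniqueness half of \cite[VI.2.1(b)]{JS}: \textsc{J}$_1$ convergence only guarantees that \emph{some} sequence $t^*_n\to t$ satisfies $\Delta g_n(t^*_n)\to\Delta g(t)$. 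One must argue, as the paper does, that since $\Delta g_n$ is supported on $\pi_n$ and $x(t_{i+1})-x(t_i)\to 0$ unless $(t_i,t_{i+1}]$ straddles $t$, any such sequence must eventually coincide with $t'_n$; only then does your boundary term compute $\Delta g(t)$ rather than an unrelated quantity (and only then can one rule out a residual jump of $G$ at $t$ when $\phi(t,x_{t-})\cdot\Delta x(t)=0$).

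Second, and more substantively, the identity $\II(t,x_{t-})=g(t-)$ does not follow from the c\`adl\`ag property of $s\mapsto\II(s,x_s)$: $\II(t,x_{t-})$ is the functional evaluated at the \emph{distinct} point $(t,x_{t-})\in\Lambda$, i.e. $\lim_n\II(t,z^n_t)$ for the stopped path $z:=x_{t-}$, not the left limit of the real-valued map $s\mapsto\II(s,x_s)$. The paper proves it by the explicit computation
\begin{equation*}
\II(t,z^n_t)=\II(t,x^n_t)-\phi(t'_n,x^n_{t'_n-})\cdot\bigl(x(t''_n)-x(t-)\bigr)\longrightarrow G(t)-\Delta G(t)=G(t-),
\end{equation*}
which is needed for every condition of Definition~\ref{def:pi} involving $x_{t-}$ or $x^n_{t_n-}$; without it your verification of 1(a)--(d) and 2(b),(d) is incomplete. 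The fix is exactly the boundary-term manipulation you already perform, applied to the stopped path, so the gap is fillable within your own framework.
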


\begin{proof}
If $(g_n, n\geq 1)$ is a Cauchy sequence in $(D[0,T],\d)$, there exists a $G\in D$ such that $g_n\stackrel{\textsc{J}_1}{\longmapsto} G$. Hence $g_n(t)\mapsto G(t)$ for every continuity point of $G$ on $[0,T]$. Observe that \begin{eqnarray}\label{eq:delta}\Delta g_n(t)=\begin{cases}
    \phi(t_i,x^n_{t_i-})\cdot(x(t_{i+1})-x({t_i})), & \text{if $t=t_i\in\pi_n$}.\\
    0, & \text{otherwise}.
  \end{cases}
\end{eqnarray}If $\Delta G(t)>0$, there exists \cite[VI.2.1(a)]{JS} a sequence $t^*_n\rightarrow t$; $\Delta g_n(t^*_n)\rightarrow\Delta G(t)$. Using the fact that $\phi_{-}$ is left-continuous, $x$ is \cadlag and (\ref{eq:delta}), we see that 
\begin{eqnarray}
\lim_n \Delta g_n(t^*_n)=\phi(t,x_{t-})\cdot\Delta x(t)=\lim_n\phi(t'_n,x^n_{t'_n-})\cdot\Delta x^n(t'_n)=\lim_n \Delta g_n(t'_n),\label{eq:suff2}
\end{eqnarray}else we will contradict $\Delta G(t)>0$. Applying \cite[VI.2.1(b)]{JS}, we deduce that $(t^*_n)$ must coincide with $(t'_n)$ for all $n$ sufficiently large and by \cite[VI.2.1(b.3)]{JS}, we have established that \begin{eqnarray}
g_n(t)\longrightarrow G(t),\label{eq:suff3}
\end{eqnarray}hence we can define $\II(t,x_t):=G(t)$ on $[0,T]$.     Let $t''_n:=\min\{t_i>t'_n|t_i\in\pi_n\}$, $z:=x_{t-}\in\Omega$, it follows from (\ref{eq:discrete}), (\ref{eq:suff2}) and (\ref{eq:suff3}) that \begin{eqnarray*}
\II(t,x_{t-})=\lim_n \II(t,z^n_t)=\lim_n \left(\II(t,x^n_t)-\phi(t'_n,x^n_{t'_n-})\cdot(x(t''_n)-x(t-))\right)=G(t-),
\end{eqnarray*}hence $t\longmapsto \II(t,x_t)$ is \cadlag and its jump at time $t$ is $\II(t,x_t)-\II(t,x_{t-})$. If $t_n\longrightarrow t$, the limits of $g_n(t_n)$ and $g_n(t_{n}-)$ are readily determined according to (\ref{eq:suff2}) and \cite[VI.2.1(b)]{JS}. The continuity criteria in Def.~\ref{def:pi} are thus satisfied.
\end{proof}

\begin{Prop}\label{prop:L}Let $\phi$ be $\Lambda$-integrable. Then $\D\II=0$ and $\nabla_{x}\II=\phi_{-}$ on $\Lambda$.
\end{Prop}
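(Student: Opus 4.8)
The plan is to work directly with the explicit Riemann sums $\II(t,x^n_t)$ of Definition~\ref{def:pathwise}, establishing each identity at the level of the approximations and only afterwards letting $n\to\infty$, using $\Lambda$-integrability together with the left-continuity of $\phi_-$. I would treat the two assertions separately.

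For $\D\II=0$, fix $(t,x_t)\in\Lambda$ and use that the stopped path $x_t$ is constant on $[t,\infty)$, so that $x_t(t_{i+1})-x_t(t_i)=0$ whenever $t_i\ge t$. For each $h>0$ and each $n$, the sum $\II(t+h,(x_t)^n_{t+h})$ differs from $\II(t,(x_t)^n_t)$ only through the extra summands indexed by $t<t_i\le t+h$, each of which carries such a vanishing increment; hence the two are equal. Letting $n\to\infty$ gives $\II(t+h,x_t)=\II(t,x_t)$, so the difference quotient in Definition~\ref{def:dt} is identically zero.

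For the vertical derivative (Definition~\ref{def:dx}) I would fix $(t,x_t)\in\Lambda$, set $z:=x_t+e\I_{[t,\infty)}$, which lies in $\Omega$ for $e$ in a convex neighbourhood of $0$ by Definition~\ref{def:closed}(ii), and read off how $e$ enters $\II(t,z^n_t)=\sum_{\pi_n\ni t_i\le t}\phi(t_i,z^n_{t_i-})\cdot(z(t_{i+1})-z(t_i))$. The key observation is that $e$ appears in exactly one summand. Since $z$ and $x$ coincide on $[0,t)$, one has $z^n_{t_i-}=x^n_{t_i-}$ for every $t_i\le t'_n$, so those integrand values are $e$-free; every increment with $t_i\ge t$ vanishes; and the unique increment straddling $t$, indexed by $t'_n$, equals $x(t)+e-x(t'_n)$. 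Consequently $e\mapsto\II(t,z^n_t)$ is affine, with (vertical) slope $b_n:=\phi(t'_n,x^n_{t'_n-})$.

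It then remains to pass to the limit and identify the derivative. By $\Lambda$-integrability the pointwise limit $\U_t(e):=\II(t,x_t+e\I_{[t,\infty)})=\lim_n\II(t,z^n_t)$ exists for every admissible $e$, and at $e=0$ it equals $\II(t,x_t)$. Since $t'_n\uparrow t$ and $x^n_{t'_n-}\to x_{t-}$ in the \textsc{J$_1$} topology (Lemma~\ref{lem:approx2}(i)), the left-continuity of $\phi_-$ (property~1(c) of Definition~\ref{def:pi}, applied with $t_n=t'_n\le t'_n$) gives $b_n\to\phi(t,x_{t-})=\phi_-(t,x_t)$. A pointwise limit of affine maps whose slopes converge and whose value at $0$ converges is itself the affine map with those limiting data; hence $\U_t(e)=\II(t,x_t)+\phi_-(t,x_t)\cdot e$, which is differentiable at $0$ with $\nabla_e\U_t(0)=\phi_-(t,x_t)$, i.e. $\nabla_x\II=\phi_-$. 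The one genuinely delicate point is the exchange of $\lim_n$ with the vertical differentiation, and this is precisely what affineness in $e$ at the discrete level resolves, dispensing with any uniform-in-$e$ estimate.
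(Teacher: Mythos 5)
Your proof is correct and follows essentially the same route as the paper: for the vertical derivative you isolate the single Riemann-sum term that depends on $e$ (the increment straddling $t$, indexed by $t'_n$), so that $\II(t,z^n_t)-\II(t,(x_t)^n_t)=\phi(t'_n,x^n_{t'_n-})\cdot e$, and then pass to the limit using $\Lambda$-integrability and property 1(c) of Definition~\ref{def:pi} for $\phi_-$, exactly as in the paper's (terser) argument. Your explicit treatment of $\D\II=0$ via the vanishing increments of the stopped path is the natural argument the paper leaves implicit, and your remark on the affine-in-$e$ structure justifying the exchange of limit and differentiation is a welcome clarification rather than a deviation.
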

  
\begin{proof}
Let $(t,x)\in\Lambda$ and $z:=x+e\I_{[t,\infty)}\in\Lambda$. Then \begin{alignat*}{2}
\II(t,z_t)-\II(t,x_t)&=\lim_{n}\left(\II(t,z^n_t)-\II(t,x^n_t)\right)\\
&=\lim_{n}\phi(t'_n,z^n_{t'_n-})\cdot e\\
&=\lim_{n}\phi(t'_n,x^n_{t'_n-})\cdot e=\phi(t,x_{t-})\cdot e,
\end{alignat*}by the continuity of $\II$ and left-continuity of $\phi_{-}$. 
\end{proof}

\begin{Thm}[Change of variable formula for class $\X$ functionals]\label{thm:formulaX}\noindent\\
Let  $F\in\X(\Lambda)$. Then for any $(T,x_T)\in \Lambda$,  the limit
\begin{alignat}{2}\label{eq:Pathwise}
\int_{0}^{T}\nabla_{x}{F}(t,x_{t-})dx:=\lim_{n\to\infty}\sum_{\pi_n\ni t_i\leq T}\nabla_{x}{F}(t_i,x^n_{t_i-})\cdot(x(t_{i+1})-x(t_i))\end{alignat} exists and
\begin{alignat*}{2}
F(T,x_T)=F(0,x_0)+\int_{0}^{T}{\D F}(t,x_t)dt+\int_{0}^{T}\nabla_{x}F(t,x_{t-})dx.
\end{alignat*}
\end{Thm}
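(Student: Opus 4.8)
The plan is to establish, for each fixed $n$, an \emph{exact} discrete decomposition of $F(T,x^n_T)-F(0,x^n_0)$ into a horizontal (time) part and a vertical (space) part along the piecewise-constant approximation $x^n$, and then to pass to the limit using the $\pi$-continuity of $F$. Fix $(T,x_T)\in\Lambda$ and, for $n$ large, let $0=t_0<t_1<\dots<t_m\le T<t_{m+1}$ be the points of $\pi_n$ up to $T$. The key observation is that on $[t_{i-1},t_i)$ the stopped path $x^n_u$ is frozen (it equals $x^n_{t_{i-1}}=x^n_{t_i-}$), while the jump of $x^n$ at $t_i$ is exactly $\Delta x^n(t_i)=x(t_{i+1})-x(t_i)$. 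Interpolating through the intermediate points $(t_i,x^n_{t_i-})$ and $(t_i,x^n_{t_i})$, the increment telescopes into horizontal increments $F(t_i,x^n_{t_i-})-F(t_{i-1},x^n_{t_{i-1}})$ (together with the terminal piece $F(T,x^n_T)-F(t_m,x^n_{t_m})$) and vertical increments $F(t_i,x^n_{t_i})-F(t_i,x^n_{t_i-})$.

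For the horizontal increments I would apply the pathwise fundamental theorem of calculus (Lemma~\ref{lem:ftc}) on each frozen piece: since $x^n_u$ is constant on $[t_{i-1},t_i)$, Lemma~\ref{lem:ftc} gives $F(t_i,x^n_{t_i-})-F(t_{i-1},x^n_{t_{i-1}})=\int_{t_{i-1}}^{t_i}\D F(u,x^n_u)\,du$, and similarly for the terminal piece. Summing, the horizontal part equals $\int_0^T\D F(u,x^n_u)\,du$, which converges to $\int_0^T\D F(u,x_u)\,du$ by Lemma~\ref{lem:qs}, using that $\D F$ is right-continuous and locally bounded as required for class $\X$.

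The crux is the vertical increments, and here the defining feature of class $\X$ --- that $\nabla_{x}F$ is strictly causal --- does all the work. By Proposition~\ref{prop:causal}, strict causality of $\nabla_{x}F$ is equivalent to $\nabla_{x}^2 F\equiv 0$, so for fixed $t_i>0$ the map $e\mapsto F(t_i,x^n_{t_i-}+e\I_{[t_i,\infty)})$ has the constant derivative $\nabla_{x}F(t_i,x^n_{t_i-})$ and is therefore \emph{exactly affine}. Consequently $F(t_i,x^n_{t_i})-F(t_i,x^n_{t_i-})=\nabla_{x}F(t_i,x^n_{t_i-})\cdot(x(t_{i+1})-x(t_i))$ with \emph{no} second-order remainder whatsoever. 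This is precisely why no hypothesis on the quadratic variation or $p$-variation of $x$ is needed: the quadratic term that appears in the general $C^{1,2}$ formula simply vanishes, and the vertical part is identically the Riemann sum $\sum_{i=1}^m\nabla_{x}F(t_i,x^n_{t_i-})\cdot(x(t_{i+1})-x(t_i))$.

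Assembling the two parts yields, for every $n$, the exact identity $F(T,x^n_T)-F(0,x^n_0)=\int_0^T\D F(u,x^n_u)\,du+\sum_{i=1}^m\nabla_{x}F(t_i,x^n_{t_i-})\cdot(x(t_{i+1})-x(t_i))$. The left-hand side converges to $F(T,x_T)-F(0,x_0)$ by the $\pi$-continuity of $F$ (Definition~\ref{def:pi}.2(c), applied with the constant sequence $t_n=T\ge t'_n$ and with $t_n=0$), and the horizontal term converges as above; hence the vertical Riemann sum converges, and the stated formula follows, with $\int_0^T\nabla_{x}F(t,x_{t-})\,dx$ \emph{defined} as that limit. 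I expect the main obstacle to lie purely at the two endpoints: reconciling the stated sum $\sum_{\pi_n\ni t_i\le T}$ (which includes the index $t_0=0$) with the telescope (which does not), and checking that the extra $i=0$ term $\nabla_{x}F(0,x^n_{0-})\cdot(x(t_1)-x(0))$ --- together with the discrepancy $F(0,x^n_0)-F(0,x_0)$ arising from the anticipating definition of $x^n$ at the origin --- vanishes as $n\to\infty$; this follows from $x(t_1)\to x(0)$ and the local boundedness and left-continuity of $\nabla_{x}F$ near $(0,x_0)$. The degeneracy of the strict-causality argument at $t=0$ must also be treated separately, but it contributes only these vanishing boundary terms.
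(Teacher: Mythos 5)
Your proposal is correct and follows essentially the same route as the paper: the same telescoping decomposition of $F(T,x^n_T)-F(0,x^n_0)$ into horizontal increments (handled by Lemma~\ref{lem:ftc} and Lemma~\ref{lem:qs}) and vertical increments, with strict causality of $\nabla_xF$ forcing $\nabla_x^2F\equiv 0$ via Proposition~\ref{prop:causal} so that each vertical increment is exactly $\nabla_xF(t_i,x^n_{t_i-})\cdot(x(t_{i+1})-x(t_i))$ with no remainder. The endpoint bookkeeping you flag is handled in the paper implicitly through the convention $x(0-)=x(0)$ and the continuity criteria of Definition~\ref{def:pi}, and poses no real difficulty.
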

\begin{proof}
Appendix \S~\ref{sec:append}.
\end{proof}

\begin{Rem}\label{rem:Pathwise}
By Prop.~\ref{prop:L}, we see that all pathwise integrals are functionals of class $\M$, hence by Thm.~\ref{thm:formulaX}, we can write \begin{eqnarray}\II(t,x_t)=\int_0^t\phi dx. \label{eq:I}\end{eqnarray}
\end{Rem}As we shall see, the converse is also true, all integrals that may be defined by (\ref{eq:Pathwise}) are pathwise integrals in the sense of Def.~\ref{def:pathwise}:
\begin{Cor}[Decomposition for class $\X$]\label{cor:decom}Let $F\in\X(\Lambda)$. Then $M:\Lambda\to \mathbb{R}$ defined by \begin{alignat*}{2}
M(t,x_t):=F(t,x_t)-F(0.x_0)-\int_{0}^{t}{\D F}(s,x_s)ds
\end{alignat*}is of class $\M$ and $\nabla_{x}M=\nabla_{x}F$. In particular, $M$ may be represented as a pathwise integral: there exists a $\Lambda-$integrable functional $\phi:\Lambda\to \mathbb{R}^m$ such that $M=\II$:
$$ \forall (t,x)\in \Lambda, \qquad M(t,x)= \int_0^t \phi.dx$$%(Def.~\ref{def:pathwise}).
\end{Cor}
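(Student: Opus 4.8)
The plan is to recognise $M$ as the \follmer-type pathwise integral of $\nabla_x F$ and then to read off all its properties from the change of variable formula together with Proposition~\ref{prop:L}. Since $F\in\X(\Lambda)$, Theorem~\ref{thm:formulaX} applies and yields, for every $(t,x_t)\in\Lambda$,
\begin{alignat*}{2}
F(t,x_t)=F(0,x_0)+\int_0^t\D F(s,x_s)\,ds+\int_0^t\nabla_x F(s,x_{s-})\,dx,
\end{alignat*}
so that $M(t,x_t)=\int_0^t\nabla_x F(s,x_{s-})\,dx$. Setting $\phi:=\nabla_x F$, the right-hand side is precisely the limit of the left Riemann sums \eqref{eq:discrete}, i.e. $M(t,x_t)=\II(t,x_t)$, and the defining pointwise limit exists by Theorem~\ref{thm:formulaX}. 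Moreover $\phi_-=(\nabla_x F)_-=\nabla_x F$ is left-continuous, the second equality holding because $\nabla_x F$ is strictly causal for $F\in\X(\Lambda)$.

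The first substantive step is to verify that $\phi$ is genuinely $\Lambda$-integrable, i.e. that $\II=M$ is continuous; Theorem~\ref{thm:formulaX} only supplies the pointwise limit. I would obtain this by writing $M$ as a sum of three continuous functionals. The term $F$ lies in $C_\pi(\Lambda)$ by hypothesis; the term $(t,x_t)\mapsto\int_0^t\D F(s,x_s)\,ds$ is continuous by Corollary~\ref{cor:DF}, since $\D F$ is right-continuous and locally bounded; and the term $(t,x_t)\mapsto F(0,x_0)$ is continuous, which along the piecewise-constant approximations amounts to $F(0,x^n(0)\I_{[0,\infty)})\to F(0,x(0)\I_{[0,\infty)})$, a consequence of $x^n(0)=x(t^n_1)\to x(0)$ and the vertical continuity of $F$ at $(0,x_0)$. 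With continuity in hand, $\phi$ satisfies Definition~\ref{def:pathwise} and $M=\II=\int_0^t\phi\,dx$.

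It then remains to invoke Proposition~\ref{prop:L}: because $\phi$ is $\Lambda$-integrable, $\D M=\D\II=0$ and $\nabla_x M=\nabla_x\II=\phi_-=\nabla_x F$. Combining this with the continuity and differentiability already established, and noting that $\D M=0$ is trivially right-continuous and locally bounded while $\nabla_x M=\nabla_x F$ is left-continuous and strictly causal, we conclude that $M\in\X(\Lambda)$ with $\D M=0$, that is, $M$ is of class $\M$, and that $M=\int_0^t\phi\,dx$ with $\phi=\nabla_x F$, which is the assertion.

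I expect the main obstacle to be exactly this continuity of $M$ (equivalently, the honest $\Lambda$-integrability of $\phi$ rather than mere pointwise convergence of its Riemann sums): all the algebraic identities follow at once from Theorem~\ref{thm:formulaX} and Proposition~\ref{prop:L}, whereas establishing membership in $C_\pi(\Lambda)$ forces one to check the initial summand $F(0,x_0)$ against the $\pi$-topology approximations at time $0$, the only place where the argument is not immediate.
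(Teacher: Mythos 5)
Your argument is correct and follows essentially the same route as the paper: both identify $M$ with the pathwise integral of $\nabla_x F$ via Theorem~\ref{thm:formulaX}, obtain continuity of $M$ from the continuity of $F$ together with Corollary~\ref{cor:DF}, and then conclude $\Lambda$-integrability from Definition~\ref{def:pathwise}. The only (cosmetic) difference is that the paper computes $\D M=0$ and $\nabla_x M=\nabla_x F$ by differentiating $M$ directly, whereas you recover them from Proposition~\ref{prop:L}; your extra care with the $F(0,x_0)$ term's continuity is a welcome elaboration of a step the paper leaves implicit.
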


\begin{proof}
By differentiating $M$, we obtain $\D M=0$ and $\nabla_{x}M=\nabla_{x}F$. Continuity of $M$ follows from Corollary~\ref{cor:DF} and Theorem~\ref{thm:formulaX}, hence  by (\ref{eq:Pathwise}), $M$ satisfies Def.~\ref{def:pathwise}.
\end{proof}

In fact, all functionals of class $\M$ have an integral representation.
 We obtain as a corollary a Fundamental theorem of calculus for functionals:
\begin{Cor}\label{cor:ftc}\noindent
\begin{itemize}
    \item[(i)] Let $\phi$ be $\Lambda$-integrable. Then the map  $\II: (t,x_t)\in \Lambda \mapsto \int_0^t \phi .dx$ is continuous, differentiable and \begin{eqnarray*}\nabla_x \II=\phi_{-}.\end{eqnarray*}
    \item[(ii)] Let $\phi:\Lambda \to\mathbb{R}$. If $F\in  \M (\Lambda)$ such that $\nabla_{x}F=\phi_{-}$, then $\phi$ is $\Lambda$-integrable and \begin{eqnarray*}\int_0^{t}\phi dx=F(t,x_t)-F(0,x_0).\end{eqnarray*}
\end{itemize}
\end{Cor}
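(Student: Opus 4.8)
The plan is to prove the two parts of Corollary~\ref{cor:ftc} by stitching together the results already established, so that essentially no new computation is needed. For part (i), I would start from the hypothesis that $\phi$ is $\Lambda$-integrable, so by Definition~\ref{def:pathwise} the map $\II:(t,x_t)\mapsto\int_0^t\phi\,dx$ exists and is continuous. It remains to produce the derivatives. The horizontal derivative $\D\II=0$ and the vertical derivative $\nabla_x\II=\phi_-$ follow directly from Proposition~\ref{prop:L}, which I may invoke verbatim. Thus $\II$ is continuous, differentiable, and has the claimed vertical derivative; nothing further is required.

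For part (ii) the hypothesis is reversed: I am given $F\in\M(\Lambda)$ with $\nabla_x F=\phi_-$, and I must show $\phi$ is $\Lambda$-integrable with $\int_0^t\phi\,dx=F(t,x_t)-F(0,x_0)$. The natural route is through Corollary~\ref{cor:decom} and Theorem~\ref{thm:formulaX}. Since $F$ is of class $\M$, by definition $\D F=0$, so the change of variable formula in Theorem~\ref{thm:formulaX} collapses to
\begin{alignat*}{2}
F(T,x_T)=F(0,x_0)+\int_0^T\nabla_x F(t,x_{t-})\,dx,
\end{alignat*}
where the limit of left Riemann sums in \eqref{eq:Pathwise} exists by that theorem. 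Because $\nabla_x F(t,x_{t-})=\phi(t,x_{t-})=\phi_-(t,x_t)$, these Riemann sums are exactly the sums $\II(t,x^n_t)$ of \eqref{eq:discrete} defining the pathwise integral of $\phi$. Hence the limit $\II(t,x_t):=\lim_n\II(t,x^n_t)$ exists pointwise and equals $F(t,x_t)-F(0,x_0)$.

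To conclude that $\phi$ is genuinely $\Lambda$-integrable in the sense of Definition~\ref{def:pathwise}, I still owe the continuity of the limit map $\II$. But this is immediate from the identity just obtained: $\II(t,x_t)=F(t,x_t)-F(0,x_0)$, and $F\in\M(\Lambda)\subset C_\pi(\Lambda)$ is continuous by Definition~\ref{def:smooth}, while the constant subtraction $F(0,x_0)$ (a fixed real number once the path is given, and continuous in $(t,x_t)$ since it depends only on the initial value) preserves continuity. Therefore $\II$ is continuous, both bullet conditions of Definition~\ref{def:pathwise} hold, and $\phi$ is $\Lambda$-integrable with the stated value of the integral.

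I expect the only genuinely delicate point to be the bookkeeping in identifying the Riemann sums of Theorem~\ref{thm:formulaX} with those of Definition~\ref{def:pathwise}: one must check that evaluating $\nabla_x F$ at the stopped-left path $x^n_{t_i-}$ coincides with evaluating $\phi$ there, which uses the strict causality built into class $\X$ (so that $\nabla_x F$ is left-continuous and the relation $\nabla_x F=\phi_-$ can be read off at the partition points). Everything else is a direct appeal to Proposition~\ref{prop:L}, Corollary~\ref{cor:decom}, and Theorem~\ref{thm:formulaX}, so the corollary is essentially a matter of assembling these statements in the right order rather than proving anything new.
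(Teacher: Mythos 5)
Your proposal is correct and follows essentially the same route as the paper: part (i) is exactly the combination of Definition~\ref{def:pathwise} with Proposition~\ref{prop:L} (the paper adds Remark~\ref{rem:Pathwise} only to write $\II=\int_0^t\phi\,dx$), and part (ii) is the paper's appeal to the Riemann-sum limit \eqref{eq:Pathwise} from Theorem~\ref{thm:formulaX} together with Corollary~\ref{cor:decom}, including the bookkeeping identification $\nabla_xF(t_i,x^n_{t_i-})=\phi_-(t_i,x^n_{t_i-})=\phi(t_i,x^n_{t_i-})$. Your filled-in details (left-continuity of $\phi_-=\nabla_xF$ and continuity of $\II=F-F(0,x_0)$) are accurate, so nothing is missing.
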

\begin{proof}
(i) is due to Prop.~\ref{prop:L} and Rem.~\ref{rem:Pathwise}. (ii) is due to (\ref{eq:Pathwise}) and Cor.~\ref{cor:decom}.
\end{proof}

%    Using class $\M$ as \emph{primitives}, we now demonstrate how to apply FTC(ii) to compute pathwise integrals. (???)

\iffalse   
\begin{Prop}\label{prop:integrable}
Let $\phi:\Lambda\longmapsto \R^{m}$ be continuous and strictly causal. If $\phi$ is horizontally differentiable and $\D \phi$ is right-continuous and locally bounded, then $\phi$ is $\Lambda$-integrable.
\end{Prop}

\begin{proof}
Since $C_\pi(\Lambda)$ is an algebra, we see that $F(t,x_t):=\phi(t,x_t)\cdot x(t)$ is a continuous functional. By the strict causality of $\phi$ and Prop.~\ref{prop:causal}, we see that $F$ is differentiable with $\nabla_{x}F=\phi$, hence of class $\X$. By Corollary ~\ref{cor:decom} and \ref{cor:ftc}(ii), the claim follows.
\end{proof}

\begin{Eg}
$\phi(t,x_t):=\int_0^{t}x(s)ds$ is $\Lambda$-integrable (Prop.~\ref{prop:integrable}) and \begin{eqnarray}\label{eq:integrable}
\int_{0}^{T}\int_{0}^{\cdot}xdt dx=\int_0^{T}x(T)\cdot x(t)-|x(t)|^{2}dt,
\end{eqnarray}by an application of Cor.~\ref{cor:ftc}(ii) to the RHS of \eqref{eq:integrable}.
\end{Eg}
\fi

\begin{Eg}\label{eg:ito}
Let $\Omega\subset Q^{\pi}_m$, $f_i\in C^1(\R)$, then
\begin{alignat}{2}\label{eq:integrable2}
\int_{0}^{T}\left(\int f_1\circ x_1dx_1, \right.&\left.\ldots,\int f_m\circ x_mdx_m\right)'dx\nonumber\\
=&\sum_i\left(\int_{0}^{T} (x_i(T)-x_i)f_i\circ x_i dx_i-\int_{0}^{T}f_i\circ x_i d[x_i]\right),\end{alignat}by an application of Cor.~\ref{cor:ftc}(ii) to the RHS of (\ref{eq:integrable2}), Example.~\ref{eg:smooth}(iii) and (\ref{eq:smooth2}).
\end{Eg}

 %Before we proceed further to conclude this section with \ito's formula, we give an example of class $\X$ as solution to the (path-dependent) Feynman-Kac equation:

%\begin{Eg}[Class $\X$]
%Suppose one searches for a $C^{1,2}$ solution for the following Feynman-Kac  equation:\begin{eqnarray*}\label{eq:feynman-kac}
%\D F+\frac{1}{2}\sigma^2\nabla_{x}^{2}F-rF+b=0,\end{eqnarray*}for positive constants $\sigma,r, b$. A solution $U$ is then given explicitly by:\begin{eqnarray*}
%U(t,x_t):=e^{rt}\left(u_0+\int_{0}^{t}-e^{-rs}bds %+\int_{0}^{t}e^{-rs}\phi(s,x_{s-})dx\right),
%\end{eqnarray*}for any constant $u_0$ and integrable $\phi$. It is clear that $U$ is a functional of class $\X$.
%\end{Eg}
%\begin{proof}
%It is due to the fact that \begin{alignat*}{2}
%\D U(t,x_t)&=rU(t,x_t)-e^{rt}e^{-rt}b,\\
%\nabla_{x} U(t,x_t)&=e^{rt}e^{-rt}\phi(t,x_{t-}),\\
%\nabla_{x}^{2}U(t,x_t)&=0.
%\end{alignat*}
%\end{proof}

  An important consequence of Theorem \ref{thm:formulaX} is to show that  class $\M$ functionals satisfy a pathwise analogue of the  \emph{martingale} property.
The concept of  {martingale} was originally introduced to model the outcome of a  \emph{fair game} \cite{ville} across a set of outcomes.
         The following result, which does not make use of any probabilistic notion, shows that a class $\M$ functional represents the outcome of such a 'fair game', where the underlying set of outcomes is a generic subset of paths:
\begin{Thm}[Fair game]\label{thm:fair}\noindent\\
Let $ M\in \M(\Lambda)$. If there exists $T>0$ such that \begin{eqnarray*}\forall x\in \Omega,\qquad M(T,x_{T})-M(0,x_0)\geq 0 \end{eqnarray*}then $$\forall x\in \Omega,\quad  M(T,x_{T})=M(0,x_0).$$
 \end{Thm}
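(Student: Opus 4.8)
The plan is to reduce the identity to step paths, settle that case by induction on the number of jumps, and recover the general statement by continuity and piecewise-constant approximation. First I would record the two structural consequences of $M\in\M(\Lambda)$. Since $\D M=0$, Lemma~\ref{lem:ftc} gives $M(t,x_s)=M(s,x_s)$ for $t\ge s$, so the functional is invariant under advancing time along a frozen path. Since $\nabla_x M$ is strictly causal, Proposition~\ref{prop:causal} shows that for each $(t,x_t)\in\Lambda$ the map $e\mapsto M(t,x_t+e\I_{[t,\infty)})$ has slope $\nabla_x M(t,x_{t-})$ \emph{independent of} $e$, hence is affine on the perturbation neighbourhood: $M(t,x_t+e\I_{[t,\infty)})=M(t,x_t)+\nabla_x M(t,x_{t-})\cdot e$. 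Equivalently, Theorem~\ref{thm:formulaX} reads $M(T,x_T)-M(0,x_0)=\int_0^T\nabla_x M(t,x_{t-})\,dx$, so the quantity to be shown to vanish is the accumulated \follmer\ integral, the ``gain'' of the game.

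Next I would treat a step path $y\in\Omega$ that is constant between jumps $a_k:=\Delta y(s_k)$ at times $0<s_1<\dots<s_N\le T$. Telescoping $M(T,y_T)-M(0,y_0)$ over the jump times, using horizontal invariance on each constant stretch and the affine formula across each jump, yields $G(y):=M(T,y_T)-M(0,y_0)=\sum_{k=1}^N \phi_k\cdot a_k$ with $\phi_k:=\nabla_x M(s_k,y_{s_k-})$. The decisive feature is \emph{predictability}: $\phi_k$ depends on $y$ only through $y_{s_k-}$, i.e. only on $a_1,\dots,a_{k-1}$.

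I would then induct on $N$. The base case $N=0$ (constant path) gives $G=0$ by horizontal invariance. For the inductive step, genericity (Definition~\ref{def:closed}(ii)) at time $s_N$, where $-a_N\in\U$, lets me erase the last jump to obtain $y'\in\Omega$ with $N-1$ jumps; the induction hypothesis gives $\sum_{k<N}\phi_k\cdot a_k=G(y')=0$. Perturbing $y'$ at $s_N$ by $e\I_{[s_N,\infty)}$ over a full neighbourhood $\U'$ of $0$ keeps the path in $\Omega$ and leaves every bet $\phi_1,\dots,\phi_N$ unchanged, so that $G=\phi_N\cdot e$; the hypothesis $G\ge0$ forces $\phi_N\cdot e\ge0$ for all $e$ in a ball about the origin, whence $\phi_N=0$ and therefore $G(y)=\sum_{k<N}\phi_k\cdot a_k+\phi_N\cdot a_N=0$. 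I expect this to be the main obstacle: a single perturbation around the \emph{original} jump $a_N$ only yields non-negativity of the linear map $e\mapsto\phi_N\cdot e$ on a neighbourhood of $a_N$, which does not force $\phi_N=0$. The resolution is to first annihilate the lower-order gain through the induction and only then perturb around the \emph{zero} jump, so that the increment sweeps a symmetric neighbourhood of $0$ and a non-negative linear functional is compelled to vanish.

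Finally, for arbitrary $x\in\Omega$ the approximations $x^n_T\in\Omega$ are step paths, so $G(x^n_T)=0$ by the induction; since $M\in C_\pi(\Lambda)$ and $x^n_T\to x_T$ in the $\pi$-topology (Definition~\ref{def:pi}.2(c) with the constant sequence $t_n\equiv T\ge t'_n$), continuity yields $G(x)=\lim_n G(x^n_T)=0$, which is the assertion $M(T,x_T)=M(0,x_0)$.
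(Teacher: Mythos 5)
Your proof is correct, and it takes a genuinely different route from the paper's. Both arguments reduce the problem to step paths (via Definition~\ref{def:closed}(i) and $\pi$-continuity through Definition~\ref{def:pi}.2(c)), and both rest on the same two structural facts: horizontal invariance from $\D M=0$ via Lemma~\ref{lem:ftc}, and affinity of the vertical increment map from strict causality of $\nabla_x M$ via Proposition~\ref{prop:causal} together with the convex perturbation neighbourhoods of Definition~\ref{def:closed}(ii) and Remark~\ref{rem:closed}. The divergence is in how the sign constraint is exploited. You run a direct induction on the number of jumps: erase the last jump, use the inductive hypothesis to annihilate the lower-order gain, then sweep a full neighbourhood of the \emph{zero} jump so that non-negativity of the linear map $e\mapsto\phi_N\cdot e$ forces $\phi_N=0$. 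The paper instead argues by contradiction with a first-passage device: it first upgrades the hypothesis to $M(t,x_t)\ge M(0,x_0)$ for all $t\le T$, locates the first partition time $t^*_n$ at which the discrete gain of $z^n$ becomes strictly positive, notes that the left limit there is pinned to zero, and reverses that single jump by $-\epsilon\Delta z(t^*_n)$ to manufacture a path with strictly negative gain. Your version proves the slightly stronger intermediate fact that $\nabla_x M$ vanishes at every jump time of every step path, and you correctly isolated the one delicate point (a perturbation about the original jump $a_N$ only tests a neighbourhood of $a_N$, which does not kill a linear functional) and resolved it by recentring at the zero jump; the paper sidesteps the same difficulty because the vanishing left limit at the first passage time lets a single arbitrarily small reflected perturbation produce the contradiction. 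The paper also invokes Theorem~\ref{thm:formulaX} to represent the gain of $z^n$ as a Riemann sum, whereas your telescoping derives the step-path case of that formula from scratch, making your argument marginally more self-contained at that point.
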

This result suggests that class $\M$ functionals may be considered pathwise analogues of     {martingales}.

\begin{proof}Since $\D M$ vanishes, by Lemma~\ref{lem:ftc} we obtain\begin{eqnarray}\label{eq:non-negativ}
M(t,x_{t})=M(t,x_{t})+\int_{t}^{T}\D M(s,x_t)ds= M(T,x_{t})\geq 0
\end{eqnarray}for all $t\leq T$, where the last inequality is due to $x_t\in\Omega$. Suppose there exists $z\in\Omega$; $M(T,z_T)>0$. By Thm.~\ref{thm:formulaX} and the continuity of $M$, it follows \begin{eqnarray}M(T,z^{n}_T)=\sum_{\pi_n\ni t_i\leq T}\nabla_{x}{M}(t_i,z^n_{t_i-})(z(t_{i+1})-z(t_i))>0\label{eq:martingale}\end{eqnarray} for all $n$ sufficiently large. Define $t^{*}_n:=\min\{t_i\in\pi_n | M(t_i,z^{n}_{t_i})>0\}$, then $t^{*}_n\leq T$. By (\ref{eq:non-negativ}), (\ref{eq:martingale}), the left-continuity of $M$ and the fact that $z^{n}\in\Omega$, we obtain  \begin{eqnarray*}
M(t^{*}_n,z^{n}_{t^{*}_n})>M(t^{*}_n,z^{n}_{t^{*}_n-})=0,
\end{eqnarray*}hence $M(t^{*}_n,z^{n}_{t^{*}_n})=\nabla_{x}{M}(t^{*}_n,z^n_{t^{*}_n-})\Delta z(t^{*}_n)>0$. Def.~\ref{def:closed}(ii) implies that there exists $\epsilon>0$ such that \begin{eqnarray*} z^{*}:=z^{n}_{t^{*}_n-}-\epsilon\Delta z(t^{*}_n)\I_{[t^{*}_n,\infty)}\in\Omega,\end{eqnarray*}hence $
M(t^{*}_n,z^{*}_{t^{*}_n})=\nabla_{x}{M}(t^{*}_n,z^n_{t^{*}_n-})(-\epsilon\Delta z(t^{*}_n))<0$, which  contradicts \eqref{eq:non-negativ}.
\end{proof}

   The following change of variable formula for causal functionals extends \cite[Theorem 4]{CF} to $C^{1,2}(\Lambda)$, removing the condition linking the partition sequence $\pi$ with the jump times of a path:
\begin{Thm}[Change of variable formula for $C^{1,2}$ functionals]\label{thm:formula}\noindent\\
Let $x\in\Omega\cap Q^{\pi}_m$. For any $F\in C^{1,2}(\Lambda)$   the following \follmer-\ito formula holds:\begin{alignat}{2}
F(T,x_T)&=F(0,x_0)+\int_{0}^{T}{\D F}(t,x_t)dt+\int_{0}^{T}\nabla_{x}F(t,x_{t-})dx\label{eq:form}\\
&+\frac{1}{2}\int_{0}^{T}\nabla^2_{x}F(t,x_{t-})d[x]^{c}+\sum_{t\leq T}\left(\Delta{F}(t,x_t)-\nabla_{x}{F}(t,x_{t-})\cdot\Delta{x}(t)\right),\nonumber
\end{alignat} where the series is absolute convergent and the pointwise limit
\begin{alignat}{2}
\int_{0}^{T}\nabla_{x}{F}(t,x_{t-})dx:=\lim_{n\to \infty}\sum_{\pi_n\ni t_i\leq T}\nabla_{x}{F}(t_i,x^n_{t_i-})\cdot(x(t_{i+1})-x(t_i))\label{eq:path} \end{alignat}exists.\end{Thm}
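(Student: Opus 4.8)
The plan is to prove the formula by discretising along the partition sequence, telescoping the increments of $F$ along the piecewise-constant approximations $x^n$, and passing to the limit $n\to\infty$ using the $\pi$-continuity of $F$. Fix $x\in\Omega\cap Q^{\pi}_m$, $T>0$ and $F\in C^{1,2}(\Lambda)$; by the genericity of $\Omega$ (Def.~\ref{def:closed}) the stopped approximations $x^n_t$ and all vertically perturbed paths lie in $\Omega$ for $n$ large, so every quantity below is well defined. For fixed $n$ I would telescope the increments of $F$ over the grid points $t_i\in\pi_n$ with $t_i\leq T$,
\begin{eqnarray*}
\sum_{\pi_n\ni t_i\leq T}\Big(F(t_{i+1},x^n_{t_{i+1}})-F(t_i,x^n_{t_i})\Big),
\end{eqnarray*}
which collapses to a boundary difference that converges to $F(T,x_T)-F(0,x_0)$ as $n\to\infty$ by the continuity of $F$ in the $\pi$-topology (Def.~\ref{def:pi}, conditions 2(c)--(d)). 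I then split each summand into a \emph{horizontal} increment $F(t_{i+1},x^n_{t_{i+1}-})-F(t_i,x^n_{t_i})$, which is a pure time shift since $x^n$ is constant on $[t_i,t_{i+1})$ so that $x^n_{t_{i+1}-}=x^n_{t_i}$, and a \emph{vertical} increment $F(t_{i+1},x^n_{t_{i+1}})-F(t_{i+1},x^n_{t_{i+1}-})$, which is the vertical perturbation of $x^n$ at $t_{i+1}$ by $\Delta x^n(t_{i+1})=x(t_{i+2})-x(t_{i+1})$.

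The horizontal increments are handled by the functional fundamental theorem of calculus: Lemma~\ref{lem:ftc} gives $F(t_{i+1},x^n_{t_i})-F(t_i,x^n_{t_i})=\int_{t_i}^{t_{i+1}}\D F(u,x^n_{t_i})\,du$, and summing and invoking Lemma~\ref{lem:qs} yields $\int_0^T\D F(u,x_u)\,du$ in the limit. For the vertical increments I would apply a second-order Taylor expansion of $e\mapsto F(t_{i+1},x^n_{t_{i+1}-}+e\I_{[t_{i+1},\infty)})$ at $e=0$, legitimate on the convex perturbation neighbourhood of Def.~\ref{def:closed}(ii) since $F$ is twice vertically differentiable. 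Re-indexing $j=i+1$, the first-order terms produce, up to a boundary term at $T$ that vanishes in the limit, exactly the left Riemann sums $\sum_{\pi_n\ni t_j\leq T}\nabla_{x}F(t_j,x^n_{t_j-})\cdot(x(t_{j+1})-x(t_j))$ of \eqref{eq:path}; the forward-looking definition of $x^n$ is precisely what turns the vertical increments into left sums. The second-order terms are $\tfrac12\sum_j\langle\nabla^2_{x}F(t_j,\xi_j),(x(t_{j+1})-x(t_j))(x(t_{j+1})-x(t_j))'\rangle$ for intermediate points $\xi_j$; replacing $\xi_j$ by $x^n_{t_j-}$ and applying Corollary~\ref{cor:qs}(i) with the left-continuous, locally bounded step integrands built from $(\nabla^2_{x}F)_{-}$ as in the proof of Lemma~\ref{lem:phi} (these converge pointwise to $\nabla^2_{x}F(\cdot,x_{\cdot-})$ by Def.~\ref{def:CC12}(iii)) shows this term converges to $\tfrac12\int_0^T\nabla^2_{x}F(t,x_{t-})\,d[x]$.

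The hard part will be to control the error incurred by replacing $\xi_j$ by $x^n_{t_j-}$ and, relatedly, to treat the jumps of $x$ \emph{without} the restrictive assumption \eqref{eq:assumption} that $\pi$ exhausts $J(x)$. Writing $\xi_j=x^n_{t_j-}+\theta_j\Delta x^n(t_j)\I_{[t_j,\infty)}$ and using the modulus of vertical continuity $\omega$ of $\nabla^2_{x}F$ (Def.~\ref{def:CC12}(iii)), the replacement error is bounded by $\tfrac12\sum_j\omega(|x(t_{j+1})-x(t_j)|)\,|x(t_{j+1})-x(t_j)|^2$, where all increments lie in a fixed ball $\overline{B}_r(0)$ with $r=2\|x\|_T$. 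On the continuous and small-jump portions the mesh vanishes so $\omega(|x(t_{j+1})-x(t_j)|)\to 0$, while the factor $\sum_j|x(t_{j+1})-x(t_j)|^2$ stays bounded because $x\in Q^{\pi}_m$. The finitely many large jumps (those with $|\Delta x(s)|>\epsilon$ on $[0,T]$, finite in number since $\sum_{s\leq T}|\Delta x(s)|^2\leq\mathrm{tr}\,[x](T)<\infty$) are isolated and handled exactly, letting $\epsilon\downarrow 0$ afterwards. This is exactly the situation for which Theorem~\ref{thm:qs} and Corollary~\ref{cor:qs} were designed, since their vague-convergence proof, via Lemmas~\ref{lem:vague1}--\ref{lem:vague3}, requires no alignment of the partition with the discontinuities of $x$.

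Finally I would reorganise the quadratic term. By Remark~\ref{rem:qv}, $\tfrac12\int_0^T\nabla^2_{x}F\,d[x]=\tfrac12\int_0^T\nabla^2_{x}F\,d[x]^{c}+\tfrac12\sum_{s\leq T}\langle\nabla^2_{x}F(s,x_{s-}),\Delta x(s)\Delta x(s)'\rangle$. At each jump time $s$ the exact increment $\Delta F(s,x_s)$ differs from its second-order Taylor value $\nabla_{x}F(s,x_{s-})\cdot\Delta x(s)+\tfrac12\langle\nabla^2_{x}F(s,x_{s-}),\Delta x(s)\Delta x(s)'\rangle$ by a higher-order remainder, and adding this discrepancy to the discrete quadratic contribution collapses the jump part to $\sum_{s\leq T}\big(\Delta F(s,x_s)-\nabla_{x}F(s,x_{s-})\cdot\Delta x(s)\big)$, leaving only $d[x]^{c}$ in the integral. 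Absolute convergence of this series follows from the bound $|\Delta F(s,x_s)-\nabla_{x}F(s,x_{s-})\cdot\Delta x(s)|\leq C|\Delta x(s)|^2$ (second-order vertical Taylor with $(\nabla^2_{x}F)_{-}$ locally bounded) together with $\sum_{s\leq T}|\Delta x(s)|^2<\infty$. Since the horizontal, second-order and jump terms all converge and the telescoped left-hand side converges by the $\pi$-continuity of $F$, the first-order (Föllmer) limit \eqref{eq:path} necessarily exists and equals the remaining difference, which yields \eqref{eq:form} and completes the proof.
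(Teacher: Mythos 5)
Your proposal follows essentially the same route as the paper's proof: telescoping along $\pi_n$, splitting each increment into a horizontal part handled by Lemma~\ref{lem:ftc} and Lemma~\ref{lem:qs} and a vertical part handled by a second-order Taylor expansion, controlling the remainder with the modulus of vertical continuity and Corollary~\ref{cor:qs}, splitting the jumps at level $\epsilon$ and letting $\epsilon\downarrow 0$ to obtain both the existence of the limit \eqref{eq:path} and the jump series, and finally invoking the Lebesgue decomposition of $\int\nabla^2_{x}F\,d[x]$ as in Remark~\ref{rem:qv}. The only difference is cosmetic: you attach the vertical increment to the right endpoint $t_{i+1}$ (creating a boundary term which indeed vanishes by the continuity of $F$), whereas the paper attaches it to $t_i$ and avoids the boundary term altogether.
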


\begin{proof}
See Appendix \S~\ref{sec:append}.
\end{proof}
An important consequence of Theorem \ref{thm:formula} is the continuity of the \follmer\ integral in the $\pi-$topology:
\begin{Prop}\label{prop:ftc}
Let $\Omega\subset Q^{\pi}_{m}$ and $F\in C^{1,2}(\Lambda)$. Then\begin{eqnarray*}
J: \Lambda &\longmapsto &\mathbb{R}\nonumber\\
(t,x) &\longmapsto& J(t,x_t):=\int_{0}^{t}\nabla_{x}{F}(s,x_s)d x
\end{eqnarray*}is continuous. In particular, $\nabla_{x}F$ is integrable and $J$ is a pathwise integral in the sense of Def.~\ref{def:pathwise}.
\end{Prop}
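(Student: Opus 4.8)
The plan is to read off the statement as a direct consequence of the functional \follmer-\ito formula (Theorem~\ref{thm:formula}), which applies to \emph{every} $x\in\Omega$ since the hypothesis $\Omega\subset Q^\pi_m$ forces $\Omega\cap Q^\pi_m=\Omega$. That theorem already guarantees that the pointwise limit defining $J(t,x_t)=\int_0^t\nabla_x F(s,x_{s-})\,dx$ exists at every $(t,x_t)\in\Lambda$, so the only new content here is the \emph{continuity} of $J$ in the $\pi$-topology. Once this is in hand, $\Lambda$-integrability of $\nabla_x F$ follows immediately from Definition~\ref{def:pathwise} (left-continuity of $(\nabla_x F)_-$ being part of $F\in C^{1,2}(\Lambda)$, cf. Definition~\ref{def:CC12}(ii)), and $J=\II$.

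First I would rearrange Theorem~\ref{thm:formula} to isolate the integral, and then apply the Lebesgue decomposition of Remark~\ref{rem:qv} to merge the second-order term with the quadratic part of the jump sum, turning the $d[x]^c$-integral into the full $d[x]$-integral. This yields the representation
\begin{align*}
J(t,x_t)={}&F(t,x_t)-F(0,x_0)-\int_0^t\D F(s,x_s)\,ds\\
&-\tfrac12\int_0^t\nabla^2_x F(s,x_{s-})\,d[x]-K(t,x_t),
\end{align*}
where the third-order jump remainder $K(t,x_t):=\sum_{s\le t}\big(\Delta F(s,x_s)-\nabla_x F(s,x_{s-})\cdot\Delta x(s)-\tfrac12\langle\nabla^2_x F(s,x_{s-}),\Delta x(s)\Delta x(s)'\rangle\big)$ is absolutely convergent by Theorem~\ref{thm:formula}. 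Three of the four displayed functionals are already continuous: $F$ by hypothesis, $\int_0^t\D F\,ds$ by Corollary~\ref{cor:DF} (as $\D F$ is right-continuous and locally bounded), and $\tfrac12\int_0^t\nabla^2_x F\,d[x]$ by Lemma~\ref{lem:phi} (as $(\nabla^2_x F)_-$ is left-continuous and locally bounded). Hence $J\in C_\pi(\Lambda)$ as soon as $K\in C_\pi(\Lambda)$.

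The main obstacle is therefore the continuity of the jump-remainder functional $K$. The limit conditions Definition~\ref{def:pi}.1(a),(b) and 2(a),(b) are routine from absolute convergence of the jump series together with the \cadlag regularity of $t\mapsto F(t,x_t)$ and $t\mapsto[x](t)$ (Lemma~\ref{lem:cont}(iii), Proposition~\ref{prop:qv}). The substantive conditions are the approximation properties 1(c),(d) and 2(c),(d): replacing $x$ by its piecewise-constant approximant $x^n$, whose jumps sit at the partition points with $\Delta x^n(t_i)=x(t_{i+1})-x(t_i)$, one must show that the discrete jump-remainders of $x^n$ accumulate to $K(t,x_t)$. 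The key estimates are that the convergence $\Delta x^n(t'_n)\Delta x^n(t'_n)'\to\Delta x(t)\Delta x(t)'$ of \eqref{eq:catch} captures the jump at $t$, while the contribution of the ``excess'' partition intervals $(t'_n,t_n]$ is dominated by $|q_n(t_n)-q_n(t'_n)|\to0$ from \eqref{eq:tight}; crucially, the \emph{modulus of vertical continuity} of $(\nabla^2_x F)_-$ (Definition~\ref{def:CC12}(iii)) is exactly what controls the second-order Taylor remainders uniformly so that they vanish in the limit. Assembling these estimates gives $K\in C_\pi(\Lambda)$, hence $J\in C_\pi(\Lambda)$, and then $\nabla_x F$ is $\Lambda$-integrable with $J=\II$.

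As a shortcut, if the appendix proof of Theorem~\ref{thm:formula} in fact establishes that the Riemann-sum step functions $g_n(t)=\II(t,x^n_t)$ converge in the \textsc{J}$_1$ topology (and not merely pointwise), then Proposition~\ref{prop:ftc} is immediate from Theorem~\ref{thm:exist}, which delivers both $\Lambda$-integrability and continuity of $\II=J$ from \textsc{J}$_1$-Cauchyness of $(g_n)$, bypassing the direct analysis of $K$ altogether.
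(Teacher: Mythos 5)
Your strategy is sound and reaches the result, but it is organised differently from the paper's proof. You invert the change of variable formula to write $J=F-F(0,x_0)-\int_0^\cdot\D F\,ds-\tfrac12\int_0^\cdot\nabla^2_xF\,d[x]-K$ and reduce everything to the continuity of the jump-remainder functional $K$, quoting Corollary~\ref{cor:DF} and Lemma~\ref{lem:phi} for the other pieces (both correctly, since Definition~\ref{def:CC12} supplies exactly their hypotheses). The paper instead works directly on $J$: it reads off conditions 1(a),(b), 2(a),(b) from the c\`adl\`ag regularity of $t\mapsto J(t,x_t)$ given by rearranging \eqref{eq:form}, and for 2(c) it writes $J(t_n,x^n_{t_n})$ as the discrete Riemann sum, splits off the excess window $(t'_n,t_n]$, and bounds that first-order excess by applying \eqref{eq:form} to the step path $x^n$ itself, using right-continuity of $F$, local boundedness of $\D F$, and the modulus of vertical continuity. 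Your route buys a cleaner treatment of the excess window (your $R^n_{t_i}$ are pure second-order remainders, so the bound $\tfrac12\omega(r_\delta)\,|q_n(t_n)-q_n(t'_n)|\to0$ from \eqref{eq:tight} suffices without the $|F(t_n,x^n_{t_n})-F(t'_n,x^n_{t'_n})|$ trick), at the cost of having to prove continuity of $K$ as a standalone statement.

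That last step is where your write-up is only a sketch, and it deserves one caveat. The convergence $\sum_{\pi_n\ni t_i\le t}R^n_{t_i}\to K(t,x_t)$ that you need for conditions 1(c),(d), 2(c),(d) is exactly \eqref{eq:rd3} in the appendix proof of Theorem~\ref{thm:formula}, but that identity is established there only for a \emph{fixed} endpoint $T$; you need it with a moving endpoint $t_n\to t$, together with the bookkeeping that the term at $t_i=t'_n$ (whose increment is $x(t''_n)-x(t'_n)\to\Delta x(t)$) is what captures the jump of $K$ at $t$. This is doable with the estimates you cite, but it is not literally a citation of an existing result of the paper; to make the argument self-contained you would have to extract \eqref{eq:rd1}--\eqref{eq:rd3} as a lemma and rerun them with the variable upper limit. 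Finally, your proposed shortcut through Theorem~\ref{thm:exist} is not available as stated: the appendix establishes pointwise convergence of the Riemann sums and convergence of the jump-locating subsequences, not \textsc{J}$_1$-Cauchyness of $g_n$, so you were right to present it only conditionally.
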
    

\begin{proof}
We apply the functional change of variable formula (Thm.~\ref{thm:formula}) to $F$. Rearranging the terms in (\ref{eq:form})  we observe that $t\longmapsto J(t,x_t)$ is \cadlag whose jump at time $t$ is $J(t,x_t)-J(t,x_{t-})$. It remains to show that $J$ satisfies the continuity criteria Def.~\ref{def:pi}.1(c),(d) and 2(c),(d). It is suffice to assume $t_n\longrightarrow t$; $t_n\geq t'_n$ (i.e. the other criteria follow similarly). By (\ref{eq:path}) and that $x$ is right-continuous, we first obtain\begin{eqnarray}
J(t_n,x^n_{t_n})&=&\int_{0}^{t_n}\nabla_{x}F(t,x^n_{t-})dx^n\nonumber\\
&=&\sum_{\pi_n\ni t_i<t}\nabla_{x}F(t_i,x^n_{t_{i}-})\cdot(x(t_{i+1})-x(t_{i}))\longrightarrow J(t,x_t)\nonumber\\
&+&\sum_{\pi_n\ni t_i\in(t'_n,t_n]}\nabla_{x}F(t_i,x^n_{t_{i}-})\cdot(x(t_{i+1})-x(t_{i})).\label{eq:rest} \end{eqnarray}We have to show that the rest term (\ref{eq:rest}) vanishes as $n\uparrow\infty$. Applying (\ref{eq:form}) to the path $x^n$ and by the local boundedness of $\D F$, we have \begin{alignat*}{2}
\left |\sum_{\pi_n\ni t_i\in(t'_n,t_n]}\nabla_{x}F(t_i,x^n_{t_{i}-})\cdot\Delta x^n(t_i)\right | &\leq |F(t_n,x^n_{t_n})-F(t'_n,x^n_{t'_n})|\\
&+\text{const} |t_n-t'_n|\\
&+\left |\sum_{\pi_n\ni t_i\in(t'_n,t_n]}\Delta F(t_i,x^n_{t_i})-\nabla_{x}F(t_i,x^n_{t_i-})\cdot\Delta x^n(t_i)\right |.
\end{alignat*}Since $t_n\geq t'_n$; $t_n, t'_n\longrightarrow t$ and by the right continuity of $F$ the first two terms vanish. Since $(\nabla^2_{x}F)_{-}$ is locally bounded and $\nabla^2_{x}F$ admits a modulus, applying a second order Taylor expansion to the third term, we obtain\begin{alignat*}{2}
\left |\sum_{\pi_n\ni t_i\in(t'_n,t_n]}\Delta F(t_i,x^n_{t_i})-\nabla_{x}F(t_i,x^n_{t_i-})\cdot\Delta x^n(t_i)\right |\leq \text{const}|q_n(t_n)-q_n(t'_n)|\longrightarrow 0,
\end{alignat*}by the fact that $q_n\stackrel{\textsc{J$_1$}}{\longrightarrow}[x]$ and \cite[\S4.2]{CC}.
\end{proof}

\section{Application to paths with finite quadratic variation}\label{sec:ftc}\noindent 

    We now examine in more detail the case of paths of finite quadratic variation   and apply the results developed in \S.\ref{sec:ito} to the case $\Omega\subset Q^{\pi}_m$. As we have already shown, integration and differentiation are inverse operations (Cor.~\ref{cor:ftc}). Using functionals of class $\M$, we show that these operations may be viewed as \emph{isomorphisms} between certain spaces. We also obtain a pathwise identity related to It\^o's isometry (Theorem \ref{thm:isometry}).
    
    The key objects here are functionals of class $\M$, which are \emph{primitives} (e.g.~\ref{eg:ito}) and may be understood as the  pathwise analogue of  {martingales} (Thm.~\ref{thm:fair}). In addition, we shall show that class $\M$ are canonical \emph{solutions} to  path dependent heat equations. 
    Let us introduce the following vector spaces of integrands:\begin{alignat*}{2}
L(\Lambda):=\{\nabla_{x}F|F\in C^{1,2}(\Lambda)\},&\qquad
L_{b}(\Lambda):=\{\nabla_{x}F|F\in C_{b}^{1,2}(\Lambda)\},\nonumber\\
\L(\Lambda):=\{\nabla_{x}F|F\in \M(\Lambda)\},&\qquad
\L_{b}(\Lambda):=\{\nabla_{x}F|F\in \M_{b}(\Lambda)\}.
\end{alignat*}

    By Prop.~\ref{prop:ftc}, the integral operator  
 \begin{alignat*}{2}\s: \phi\in L(\Lambda)\longmapsto \II \in \R^{\Lambda},
 \end{alignat*}where $\II$ is given by \eqref{eq:I}, is a well-defined linear operator. 
 
\begin{Eg}[Path-dependent 1-form]\label{eg:left_int}\noindent\\
Let $f_i\in C^{1}(\R)$, $i=1,\ldots,m$ then\begin{eqnarray*}
\phi(t,x_t):=\left(\left(\int f_1\circ x_1 dx_1\right)(t-),\ldots,\left(\int f_m\circ x_m dx_m\right)(t-)\right)'
\end{eqnarray*} defines  an element of $\L_{b}(\Lambda)$.
\end{Eg}

\begin{proof}
See Example~\ref{eg:smooth}(\ref{eq:smooth2}).
\end{proof}

\begin{Lem}\label{lem:image}\noindent
\begin{itemize}
    \item[(i)] If $\phi\in L(\Lambda)$ then $\s\phi\in\M(\Lambda)$ and $\nabla_x(\s\phi)=\phi_{-}$.
    \item[(ii)] If $\phi\in L_{b}(\Lambda)$ then $\s\phi\in\M_{b}(\Lambda)$ and $\nabla_x(\s\phi)=\phi_{-}$.
    \item[(iii)] If $\phi\in \L(\Lambda)$ then $\s\phi\in\M(\Lambda)$ and $\nabla_x(\s\phi)=\phi$.
    \item[(iv)] If $\phi\in \L_{b}(\Lambda)$ then $\s\phi\in\M_b(\Lambda)$ and $\nabla_x(\s\phi)=\phi$.
\end{itemize}
\end{Lem}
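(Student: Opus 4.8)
The plan is to obtain all four statements by chaining together the results already established for pathwise integrals, with no direct computation. The key preliminary observation is the tower of inclusions $\M_{b}(\Lambda)\subset\M(\Lambda)\subset\X(\Lambda)\subset C^{1,2}(\Lambda)$ together with $C_{b}^{1,2}(\Lambda)\subset C^{1,2}(\Lambda)$; this guarantees that $\L_{b}(\Lambda)$, $\L(\Lambda)$ and $L_{b}(\Lambda)$ are all contained in $L(\Lambda)$, so that the operator $\s$ and Prop.~\ref{prop:ftc} may legitimately be invoked on each of them. The inclusion $\X(\Lambda)\subset C^{1,2}(\Lambda)$ is the one point deserving explicit verification: for $F\in\X(\Lambda)$ the derivative $\nabla_{x}F$ is strictly causal, so by Prop.~\ref{prop:causal} its own vertical derivative $\nabla^2_{x}F$ exists and vanishes, whence $(\nabla^2_{x}F)_{-}=0$ trivially meets Def.~\ref{def:CC12}(iii), while conditions (i) and (ii) of Def.~\ref{def:CC12} are immediate from Def.~\ref{def:smoothX} (strict causality gives $(\nabla_{x}F)_{-}=\nabla_{x}F$, which is left-continuous).

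For (i), write $\phi=\nabla_{x}F$ with $F\in C^{1,2}(\Lambda)$. By Prop.~\ref{prop:ftc}, $\phi$ is $\Lambda$-integrable and $\s\phi=\II$ is a pathwise integral in the sense of Def.~\ref{def:pathwise}. By Rem.~\ref{rem:Pathwise} every pathwise integral is of class $\M$, so $\s\phi\in\M(\Lambda)$, and Prop.~\ref{prop:L} gives $\nabla_{x}(\s\phi)=\phi_{-}$. This is the whole content of (i). Statement (ii) follows at once: if moreover $F\in C_{b}^{1,2}(\Lambda)$ then $\phi_{-}=(\nabla_{x}F)_{-}$ is locally bounded by definition, so $\nabla_{x}(\s\phi)=\phi_{-}$ is locally bounded and hence $\s\phi\in\M_{b}(\Lambda)$.

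For (iii) and (iv) the only new ingredient is strict causality. If $\phi\in\L(\Lambda)$ then $\phi=\nabla_{x}G$ with $G\in\M(\Lambda)\subset\X(\Lambda)$, so $\phi$ is strictly causal and therefore $\phi_{-}=\phi$ by Def.~\ref{def:causal}. Applying (i) to this $\phi$ yields $\s\phi\in\M(\Lambda)$ and $\nabla_{x}(\s\phi)=\phi_{-}=\phi$, which is (iii). Statement (iv) then follows from (iii) exactly as (ii) followed from (i): for $\phi\in\L_{b}(\Lambda)$ the derivative $\phi=\nabla_{x}G$ of a class-$\M_{b}$ functional is locally bounded, so $\nabla_{x}(\s\phi)=\phi$ is locally bounded and $\s\phi\in\M_{b}(\Lambda)$.

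Since every step reduces to a direct appeal to a previously established proposition, I anticipate no genuine obstacle. The only points requiring care are the verification of $\X(\Lambda)\subset C^{1,2}(\Lambda)$ noted above, so that $\s$ is defined on $\L(\Lambda)$ and $\L_{b}(\Lambda)$, and keeping straight when $\phi_{-}$ may be replaced by $\phi$ — namely exactly when strict causality is available, i.e.\ in (iii) and (iv) but not in (i) and (ii).
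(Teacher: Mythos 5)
Your proposal is correct and follows essentially the same route as the paper, whose proof is the one-line citation of Prop.~\ref{prop:ftc} and Cor.~\ref{cor:ftc}(i); your use of Prop.~\ref{prop:L} and Rem.~\ref{rem:Pathwise} just unpacks what Cor.~\ref{cor:ftc}(i) already packages. Your explicit check that $\X(\Lambda)\subset C^{1,2}(\Lambda)$ (via Prop.~\ref{prop:causal}, so that Prop.~\ref{prop:ftc} applies to $\L(\Lambda)$ and $\L_b(\Lambda)$) is a detail the paper leaves implicit, and you handle it correctly.
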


\begin{proof}
It is due to Prop.~\ref{prop:ftc} and Cor.\ref{cor:ftc}(i). 
\end{proof}

\begin{Cor}\label{cor:biject}
Define \begin{eqnarray*}\M_{0}(\Lambda):=\{F\in \M_{b}(\Lambda)|F(0,x_0)\equiv 0\},\end{eqnarray*} then the integral operator \begin{eqnarray*}
\s:\L_{b}(\Lambda)\longmapsto \M_{0}(\Lambda)
\end{eqnarray*}is an isomorphism and the inverse of $\s$ is the differential operator $\nabla_{x}$.
\end{Cor}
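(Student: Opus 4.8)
The plan is to show that $\s:\L_b(\Lambda)\to\M_0(\Lambda)$ is a well-defined bijective linear map whose inverse is $\nabla_x$. The main tools are Lemma~\ref{lem:image}(iv) and Corollary~\ref{cor:ftc}(ii), which together already encode the two "round-trip" identities; the work lies in checking that the operators land in the stated spaces and in correctly normalising by subtracting off the value at time $0$.

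First I would verify that $\s$ maps $\L_b(\Lambda)$ into $\M_0(\Lambda)$. If $\phi\in\L_b(\Lambda)$, then by Lemma~\ref{lem:image}(iv) we have $\s\phi\in\M_b(\Lambda)$ with $\nabla_x(\s\phi)=\phi$. Moreover, by the very definition of the pathwise integral in \eqref{eq:I}, $\II(0,x_0)=0$ for every $x\in\Omega$, since the Riemann sum in \eqref{eq:discrete} is empty at $t=0$. Hence $\s\phi\in\M_0(\Lambda)$, so $\s$ is indeed valued in $\M_0(\Lambda)$, and linearity of $\s$ is immediate from linearity of the Riemann sums defining $\II$.

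Next I would construct the inverse. Given $F\in\M_0(\Lambda)$, set $\phi:=\nabla_x F$; by definition $\phi\in\L_b(\Lambda)$ since $F\in\M_b(\Lambda)$. The point is to show $\s\phi=F$. Because $F\in\M(\Lambda)$, it is strictly causal in its vertical derivative in the sense that $\nabla_x F=(\nabla_x F)_{-}=\phi_{-}=\phi$ (as $\phi$ is left-continuous and strictly causal by class $\M$), so Corollary~\ref{cor:ftc}(ii) applies with this $\phi$: it yields that $\phi$ is $\Lambda$-integrable and
\begin{eqnarray*}
(\s\phi)(t,x_t)=\int_0^t\phi\, dx=F(t,x_t)-F(0,x_0)=F(t,x_t),
\end{eqnarray*}
using $F(0,x_0)\equiv 0$ for $F\in\M_0(\Lambda)$. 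This shows $\s\circ\nabla_x=\mathrm{id}$ on $\M_0(\Lambda)$. Conversely, Lemma~\ref{lem:image}(iv) gives $\nabla_x(\s\phi)=\phi$ for all $\phi\in\L_b(\Lambda)$, so $\nabla_x\circ\s=\mathrm{id}$ on $\L_b(\Lambda)$. Thus $\s$ is a bijection with inverse $\nabla_x$, hence an isomorphism of vector spaces.

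The only genuinely delicate point, and the step I would scrutinise most carefully, is the interface between the normalisations: one must confirm that $\nabla_x$ really maps $\M_0(\Lambda)$ \emph{onto} $\L_b(\Lambda)$ (surjectivity of the inverse) and that the constant $F(0,x_0)$ is killed in both directions consistently. Surjectivity follows because any $\phi\in\L_b(\Lambda)$ equals $\nabla_x G$ for some $G\in\M_b(\Lambda)$ by definition of $\L_b$, and then $G-G(0,\cdot)\in\M_0(\Lambda)$ has the same vertical derivative $\phi$; so every element of $\L_b(\Lambda)$ is hit. The matching of the zero-time values — $\II(0,\cdot)=0$ on the $\s$ side and $F(0,\cdot)=0$ on the $\M_0$ side — is exactly what makes the two identities $\s\circ\nabla_x=\mathrm{id}$ and $\nabla_x\circ\s=\mathrm{id}$ hold without a residual additive constant, so I would state this matching explicitly rather than leave it implicit.
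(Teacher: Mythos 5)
Your proof is correct and follows essentially the same route as the paper, which likewise deduces injectivity from Lemma~\ref{lem:image}(iv) and surjectivity from Corollary~\ref{cor:ftc}(ii); you have simply spelled out the normalisation at time $0$ that the paper leaves implicit. One microscopic quibble: the Riemann sum \eqref{eq:discrete} at $t=0$ is not empty (it contains the term for $t_i=0$), but its limit still vanishes by right-continuity of $x$, so the conclusion $\II(0,x_0)=0$ stands.
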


\begin{proof}
Injectivity follows from Lemma~\ref{lem:image}(iv). Surjectivity is due to Cor.~\ref{cor:ftc}(ii).
\end{proof}
    We now obtain a pathwise identity of \ito\footnote{First appeared in \cite[Lem. 2]{ito}.}, in the spirit\footnote{Probabilistic equals to deterministic counterpart, up to a martingale term.} of the pathwise Burkholder-Davis-Gundy inequality \cite{PS}, and give an application.
%\begin{Def}[generator]\label{def:gen}
For $\phi,\psi\in \L_{b}(\Lambda)$ define $\{\phi,\psi\}\in  \L_{b}(\Lambda)$ by
\begin{eqnarray*} \{\phi,\psi\}: \Lambda& \mapsto & \mathbb{R}^d \nonumber\\
(t,x)&\to  &\left(\psi \int_0^{.}\phi.dx+\phi\int_0^{.} \psi. dx\right)(t,x_{t-}).\end{eqnarray*}

\begin{Thm}\label{thm:isometry} For all $\phi, \psi \in \L_b(\Lambda)$, $\{\phi,\psi\}\in \L_{b}(\Lambda)$ and
\begin{eqnarray*}
\left(\int\phi dx\right)\left(\int\psi dx\right)=\int\phi\psi' d[x]+
\int \{\phi,\psi\} dx.
\end{eqnarray*} 
\end{Thm}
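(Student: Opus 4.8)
The plan is to establish the identity by recognizing both sides as functionals of class $\M_b$ and matching their vertical derivatives, then invoking uniqueness (Cor.~\ref{cor:biject}) together with the initial-value normalization. First I would set $F(t,x_t):=\left(\int_0^t\phi\,dx\right)\left(\int_0^t\psi\,dx\right)$, the product of two pathwise integrals. By Cor.~\ref{cor:ftc}(i) each factor is of class $\M_b$ (being $\s\phi$, $\s\psi$ with $\phi,\psi\in\L_b(\Lambda)$), and since $C_b^{1,2}(\Lambda)$ is an algebra, $F\in C_b^{1,2}(\Lambda)$. The key computation is the product rule for the vertical derivative: writing $P:=\int_0^{\cdot}\phi\,dx$ and $R:=\int_0^{\cdot}\psi\,dx$, I would compute
\begin{eqnarray*}
\nabla_x F(t,x_t)=R(t,x_{t-})\,\nabla_x P(t,x_t)+P(t,x_{t-})\,\nabla_x R(t,x_t)+\nabla_x P(t,x_t)\,\nabla_x R(t,x_t)'\,\Delta x(t),
\end{eqnarray*}
the last (Itô-type) cross term arising because the vertical perturbation shifts both factors simultaneously. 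Using $\nabla_x P=\phi$ and $\nabla_x R=\psi$ (Cor.~\ref{cor:ftc}(i)), this reads $\nabla_x F=R\phi+P\psi+\phi\psi'\Delta x$, evaluated appropriately at $(t,x_{t-})$.

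Next I would apply the change of variable formula (Thm.~\ref{thm:formula}) to $F$ along $x\in\Omega\cap Q^\pi_m$. Since $\D F=0$ (each factor is class $\M$, so their product has vanishing horizontal derivative), the formula gives
\begin{eqnarray*}
F(T,x_T)=\int_0^T\nabla_x F(t,x_{t-})\,dx+\frac{1}{2}\int_0^T\nabla_x^2 F(t,x_{t-})\,d[x]^c+\sum_{t\le T}\left(\Delta F-\nabla_x F(t,x_{t-})\cdot\Delta x(t)\right).
\end{eqnarray*}
The second-order term produces the $\int\phi\psi'\,d[x]^c$ contribution from $\nabla_x^2 F=\phi\psi'+\psi\phi'$ applied symmetrically, while the jump sum contributes the purely discontinuous part $\sum_{t\le T}\phi\psi'\,\Delta x\,\Delta x$, so together with the continuous part these combine (via the polarisation identity (\ref{eq:qv}) and Rem.~\ref{rem:qv}) into the full $\int\phi\psi'\,d[x]$. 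The first-order term, after substituting $\nabla_x F=R\phi+P\psi+\phi\psi'\Delta x$ and absorbing the $\phi\psi'\Delta x$ piece into the jump bookkeeping, yields $\int\{\phi,\psi\}\,dx$, where $\{\phi,\psi\}=\psi\int_0^{\cdot}\phi\,dx+\phi\int_0^{\cdot}\psi\,dx$ is exactly the strictly causal integrand appearing in the statement.

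The remaining task is to confirm $\{\phi,\psi\}\in\L_b(\Lambda)$, i.e.\ that it is the vertical derivative of some $\M_b$ functional. This follows because $\{\phi,\psi\}=\nabla_x\!\left(P R\right)_{-}$ up to the diagonal correction, and since $PR\in C_b^{1,2}(\Lambda)$ with $\D(PR)=0$ and strictly-causal first derivative after removing the $\Delta x$ term, it lies in $\M_b(\Lambda)$; then Cor.~\ref{cor:biject} identifies $\{\phi,\psi\}$ as a legitimate element of $\L_b(\Lambda)$, and its integral $\int\{\phi,\psi\}\,dx$ is well-defined and continuous by Prop.~\ref{prop:ftc}. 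The main obstacle I anticipate is the careful jump accounting: the naive product rule for $\nabla_x$ carries the extra $\phi\psi'\Delta x$ term, and one must track precisely how this term interacts with the jump sum $\sum_{t\le T}(\Delta F-\nabla_x F\cdot\Delta x)$ in the change of variable formula so that the discrete pieces recombine exactly into $\int\phi\psi'\,d[x]$ (not $\,d[x]^c$) without double-counting. Getting the strictly-causal evaluation $(\cdot)(t,x_{t-})$ right in the definition of $\{\phi,\psi\}$ is the crux of matching the two sides term by term.
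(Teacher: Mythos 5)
Your proposal follows essentially the same route as the paper: form the product $FG$ of the two pathwise integrals, use that $\M_b(\Lambda)\subset C^{1,2}_b(\Lambda)$ and that $C^{1,2}_b(\Lambda)$ is an algebra, apply the change of variable formula (Thm.~\ref{thm:formula}) to $FG$, and read off the terms, so the argument is correct in structure and matches the paper's (much terser) proof. The only slip is in the product rule: the jump correction to $\nabla_x(FG)$ is the symmetrized $(\phi\psi'+\psi\phi')\Delta x$ rather than $\phi\psi'\Delta x$ alone, but this washes out against the symmetric $\Delta x\,\Delta x'$ in the jump sum, and the membership $\{\phi,\psi\}\in\L_b(\Lambda)$ is most cleanly obtained from Lemma~\ref{lem:image}(ii) applied to $\nabla_x(FG)\in L_b(\Lambda)$ rather than via Cor.~\ref{cor:biject}.
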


\begin{proof} Recall that
  $C^{1,2}_{b}(\Lambda)$ is an algebra. Let $\phi, \psi \in \L_{b}(\Lambda)$, put $F:=\int\phi dx, G:=\int\psi dx$, then $F,G\in \M_b(\Lambda)$ by Lemma~\ref{lem:image}(iv). Since $\M_b(\Lambda)\subset C^{1,2}_b(\Lambda)$, it follows $FG\in C^{1,2}_b$. Apply the change of variable formula (Thm.~\ref{thm:formula}) to $FG$; using Lemma \ref{lem:image}(ii), the proof is complete.
\end{proof}

\begin{Cor}[Isometry]\label{cor:isometry}\noindent\\
Let $\E\subset \L_b(\Lambda)$ be a subspace such that
$$ \forall \phi, \psi \in \E, \quad \{\phi,\psi\}\in \E$$
and denote $\bf{I}(\E)$ the image of $\E$ under $\s$. If $\mathbb{E}$ is any positive element of the algebraic dual $C^{*}(\Lambda)$ such that $\bf{I}(\E) \subset \text{ker} (\mathbb{E})$, then \begin{eqnarray*}
 \left\langle\int\phi dx, \int\psi dx\right\rangle_{\bf{I}(\E)}:=\mathbb{E}\left(\int\phi dx \int\psi dx\right)=\mathbb{E}\left(\int\phi\psi' d[x]\right)=:\left\langle\phi,\psi\right\rangle_{\E}
\end{eqnarray*}holds for all $\phi, \psi\in\E$. 

    In particular, the bracket $\left\langle.,.\right\rangle_{\E}$ induces a semi-norm on $\E$. Denoting $\tilde{\E}$ the quotient space induced by the semi-norm,  the integral operator\begin{alignat*}{2}
\tilde{\s}:\tilde{\E}&\longmapsto \bf{I}(\tilde{\E})\\
\tilde{\phi}&\longmapsto \tilde{\s}\tilde{\phi}:=\s\phi 
\end{alignat*}is an isometric isomorphism between the pre-Hilbert spaces $\tilde{\E}$ and $\bf{I}(\tilde{\E})$. The inverse of $\tilde{\s}$ is the differential operator\begin{alignat*}{2}
\tilde{\nabla_{x}}:\bf{I}(\tilde{\E})&\longmapsto \tilde{\E} \\
\tilde{F}&\longmapsto \tilde{\nabla_{x}}\tilde{F}:=\nabla_{x}F 
\end{alignat*}
\end{Cor}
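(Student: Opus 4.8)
The plan is to obtain every assertion by applying the positive functional $\mathbb{E}$ to the pathwise Kunita--Watanabe identity of Theorem~\ref{thm:isometry}. Fix $\phi,\psi\in\E\subset\L_b(\Lambda)$. By Lemma~\ref{lem:image}(iv) both $\int\phi dx$ and $\int\psi dx$ lie in $\M_b(\Lambda)\subset C^{1,2}_b(\Lambda)$, and since $C^{1,2}_b(\Lambda)$ is an algebra, Theorem~\ref{thm:isometry} is an identity in $C^{1,2}_b(\Lambda)\subset C(\Lambda)$:
\[
\left(\int\phi dx\right)\left(\int\psi dx\right)=\int\phi\psi' d[x]+\int\{\phi,\psi\}dx.
\]
Applying $\mathbb{E}$ and using its linearity splits the right-hand side into two terms. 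The stability hypothesis on $\E$ gives $\{\phi,\psi\}\in\E$, so $\int\{\phi,\psi\}dx=\s\{\phi,\psi\}\in\mathbf{I}(\E)\subset\ker(\mathbb{E})$ and the corresponding term vanishes. This yields the first displayed identity of the statement, namely $\langle\int\phi dx,\int\psi dx\rangle_{\mathbf{I}(\E)}=\langle\phi,\psi\rangle_\E$.

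Next I would verify that $\langle\cdot,\cdot\rangle_\E$ is a semi-inner product. Bilinearity is inherited from the linearity of $\s$ and of $\mathbb{E}$ together with the bilinearity of $(\phi,\psi)\mapsto\int\phi\psi' d[x]$. Both symmetry and positivity are then read off the identity just established, \emph{without} any further computation: symmetry holds because $\langle\phi,\psi\rangle_\E=\mathbb{E}(\int\phi dx\int\psi dx)=\mathbb{E}(\int\psi dx\int\phi dx)=\langle\psi,\phi\rangle_\E$ by commutativity of the real-valued product, while positivity follows from $\langle\phi,\phi\rangle_\E=\mathbb{E}\big((\int\phi dx)^2\big)\geq0$, since the functional $(\int\phi dx)^2$ is pointwise nonnegative and $\mathbb{E}$ is positive. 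Hence $\|\phi\|_\E:=\langle\phi,\phi\rangle_\E^{1/2}$ is a semi-norm, which is the second assertion.

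For the isometry statement, the identity of the first paragraph says exactly that $\s:\E\to\mathbf{I}(\E)$ preserves the two semi-inner products, so in particular $\|\s\phi\|_{\mathbf{I}(\E)}=\|\phi\|_\E$. Let $\mathcal{N}:=\{\phi\in\E:\|\phi\|_\E=0\}$; since $\s$ is injective on $\E$ (its left inverse is $\nabla_x$, by Lemma~\ref{lem:image}(iv)) and isometric, the kernel of the semi-norm on $\mathbf{I}(\E)$ is precisely $\s(\mathcal{N})$. Passing to the quotients $\tilde\E=\E/\mathcal{N}$ and the quotient $\mathbf{I}(\tilde\E)$ of $\mathbf{I}(\E)$ by $\s(\mathcal{N})$, the prescription $\tilde\s\tilde\phi:=\s\phi$ is well defined (independent of the representative because $\s(\mathcal{N})$ is the target kernel), linear and norm-preserving; a norm-preserving linear map is injective, and it is onto $\mathbf{I}(\tilde\E)$ by construction. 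Thus $\tilde\s$ is an isometric isomorphism of pre-Hilbert spaces.

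Finally, for the inverse I would again invoke Lemma~\ref{lem:image}(iv), which gives $\nabla_x(\s\phi)=\phi$ for all $\phi\in\E$, i.e.\ $\nabla_x\circ\s=\mathrm{id}_\E$. This descends to the quotient: setting $\tilde{\nabla_x}\tilde F:=\nabla_x F$ is unambiguous because if $F-F'=\s(\phi-\phi')$ with $\phi-\phi'\in\mathcal{N}$ then $\nabla_x F-\nabla_x F'=\phi-\phi'\in\mathcal{N}$, and the relations $\tilde{\nabla_x}\circ\tilde\s=\mathrm{id}_{\tilde\E}$ and $\tilde\s\circ\tilde{\nabla_x}=\mathrm{id}_{\mathbf{I}(\tilde\E)}$ identify $\tilde{\nabla_x}$ as the inverse of $\tilde\s$. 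The only point demanding genuine care is the bookkeeping between the two quotients --- matching the semi-norm kernel $\mathcal{N}$ upstairs with $\s(\mathcal{N})$ downstairs so that $\tilde\s$ and $\tilde{\nabla_x}$ are simultaneously well defined --- but this is forced by the exact isometry $\|\s\phi\|_{\mathbf{I}(\E)}=\|\phi\|_\E$ proved in the first step.
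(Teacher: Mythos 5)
Your proposal is correct and follows exactly the route the paper intends: the paper's own proof is a one-line citation of Corollary~\ref{cor:biject} and Theorem~\ref{thm:isometry}, and you have simply filled in the details — applying the positive functional $\mathbb{E}$ to the Kunita--Watanabe identity, killing the $\int\{\phi,\psi\}dx$ term via $\mathbf{I}(\E)\subset\ker(\mathbb{E})$, and using the injectivity of $\s$ (Lemma~\ref{lem:image}(iv), which is also what underlies Corollary~\ref{cor:biject}) to pass to the quotients.
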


\begin{proof}
The result is  a consequence of Cor.~\ref{cor:biject} and Thm.~\ref{thm:isometry}.
\end{proof}

%\begin{Rem}If $\mathbb{E}$ is sublinear (instead of linear), then $\tilde{\E}$ and $\bf{I}(\tilde{\E})$ are isometrically isomorphic normed vector spaces.\end{Rem}
 
    We conclude with a discussion on the relation between class $\M(\Lambda)$  and harmonic functionals, defined as solutions to a class of path-dependent heat equations \cite[Ch. 8]{RC}. Let $\Sigma: \Lambda\to S_m^+$ be a right-continuous function on $\Lambda$ taking values in  positive-definite symmetric $m\times m$ matrices and \begin{eqnarray*}
\Omega_{\Sigma}:=\{x\in \Omega|\frac{d[x]}{dt}=\Sigma \}\subset \Omega
\end{eqnarray*}
the set of paths with absolutely continuous quadratic variation with Lebesgue density $\Sigma$.
 \begin{Def}
 $F\in C^{1,2}(\Lambda)$ is called $\Sigma$-harmonic if it satisfies\begin{eqnarray}
\forall x\in \Omega_{\Sigma}, \quad \forall t\geq 0, \qquad 
\D F(t,x_t)+\frac{1}{2}\langle\nabla^{2}_{x}F(t,x_{t}),\Sigma(t,x_t)\rangle=0.\label{eq:pde}
\end{eqnarray}
%for all $t\geq 0$ and $x\in \Omega_{\Sigma}$.
 \end{Def}  

If $F$ is $\Sigma$-harmonic, then the change of variable formula  (Theorem \ref{thm:formula}) 
gives \begin{eqnarray}
F(t,x_t)=F(0,x_0)+\int_{0}^{t}\nabla_{x}F(s,x_{s-})dx\label{eq:harmonic}
\end{eqnarray}for all $t\geq 0$ and $x\in \Omega_{\Sigma}$. Equality in (\ref{eq:harmonic}) then holds on $\Omega_{\Sigma}$. %but not on $\Omega- $. 
Every functional of class $\M$ satisfies \eqref{eq:pde}, hence is $\Sigma$-harmonic for all $\Sigma$. 
\begin{Thm}[Representation of $\Sigma$-harmonic functionals]
\label{thm:harmonic}
If $F$ is $\Sigma$-harmonic, then there exists a class $\M$ functional $M$ such that \begin{eqnarray*}
M|_{\Omega_{\Sigma}}\equiv F.
\end{eqnarray*}In particular,  $M$ is uniquely determined  by (\ref{eq:harmonic}) on $\Omega_{\Sigma}$.
\end{Thm}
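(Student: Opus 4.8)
The plan is to realise $M$ as the pathwise \follmer\ integral of $\nabla_{x}F$, corrected by the initial value of $F$. Being $\Sigma$-harmonic, $F$ belongs to $C^{1,2}(\Lambda)$, so $\phi:=\nabla_{x}F\in L(\Lambda)$; since we are in the case $\Omega\subset Q^{\pi}_m$, Lemma~\ref{lem:image}(i) then produces a functional of class $\M$ on the \emph{entire} domain $\Lambda$,
\begin{eqnarray*}
\s\phi=\II:\ (t,x_t)\longmapsto\int_{0}^{t}\nabla_{x}F(s,x_{s-})\,dx\in\M(\Lambda),\qquad \nabla_{x}(\s\phi)=(\nabla_{x}F)_{-}.
\end{eqnarray*}
Crucially this holds on all of $\Lambda$, not merely on the sub-domain $\Omega_{\Sigma}$ on which the PDE is imposed: it is precisely the $\pi$-topology continuity of the \follmer\ integral (Prop.~\ref{prop:ftc}) that makes $\s\phi$ continuous, hence of class $\M$.

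Next I would incorporate the initial condition through the functional $c(t,x_t):=F(0,x_0)$, which depends only on $x(0)$. One checks directly that $\D c\equiv0$, that $c$ is strictly causal (with $\nabla_{x}c\equiv0$ for $t>0$, so $\nabla_{x}c$ is left-continuous and strictly causal), and that $c$ is $\pi$-continuous, since $c(t_n,x^{n}_{t_n})\to F(0,x_0)$ because $x^{n}(0)=x(t^{n}_{1})\to x(0)$ by right-continuity of $x$; hence $c\in\M(\Lambda)$. As $\M(\Lambda)$ is a vector space, the candidate
\begin{eqnarray*}
M:=c+\s\phi,\qquad M(t,x_t)=F(0,x_0)+\int_{0}^{t}\nabla_{x}F(s,x_{s-})\,dx,
\end{eqnarray*}
again lies in $\M(\Lambda)$.

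It remains to identify $M$ with $F$ on $\Omega_{\Sigma}$. For $x\in\Omega_{\Sigma}$ the quadratic variation $[x]$ is absolutely continuous, so $x$ has no jumps: in the functional \follmer-\ito formula (Thm.~\ref{thm:formula}) the jump series drops out and $d[x]^{c}=\Sigma\,dt$, and substituting the harmonic equation \eqref{eq:pde} annihilates the two $dt$-integrals, yielding exactly \eqref{eq:harmonic}. Therefore $F(t,x_t)=F(0,x_0)+\int_{0}^{t}\nabla_{x}F\,dx=M(t,x_t)$ for every $(t,x_t)$ with $x\in\Omega_{\Sigma}$, i.e.\ $M|_{\Omega_{\Sigma}}\equiv F$. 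Uniqueness of the restriction is then immediate, since \eqref{eq:harmonic} forces the value of $M$ at each such $(t,x_t)$ to equal $F(0,x_0)$ plus the pathwise integral of $\nabla_{x}F$, both of which are read off from $F$ alone.

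I expect the main obstacle to be conceptual rather than computational: one must ensure that $\s\phi$ is a legitimate class $\M$ functional \emph{globally} on $\Lambda$, even though the change of variable formula and the harmonic PDE are invoked only on the thin set $\Omega_{\Sigma}$. This is exactly what Lemma~\ref{lem:image}(i)---and behind it the continuity of the \follmer\ integral in the $\pi$-topology built in Sections~\ref{sec:pi}--\ref{sec:ito}---secures; once it is in hand, checking that $c$ is of class $\M$ and that the harmonic equation collapses the \follmer-\ito formula to \eqref{eq:harmonic} are both routine.
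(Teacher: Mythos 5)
Your proposal is correct and follows essentially the same route as the paper: define $M(t,x_t):=F(0,x_0)+\int_0^t\nabla_xF(s,x_{s-})dx$, invoke Lemma~\ref{lem:image}(i) to get $M\in\M(\Lambda)$ globally, and use the change of variable formula collapsed by the harmonic equation \eqref{eq:pde} to identify $M$ with $F$ on $\Omega_\Sigma$. The only additions are your explicit (and correct) verifications that the initial-value functional $c$ is itself of class $\M$ and that paths in $\Omega_\Sigma$ have no jumps, details the paper leaves implicit.
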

\begin{proof}
Let $F\in C^{1,2}(\Lambda)$ be $\Sigma$-harmonic. We can define a  functional $M:\Lambda\to\mathbb{R}$ by \begin{eqnarray}
M(t,x):=F(0,x_0)+\int_{0}^{t}\nabla_{x}F(s,x_{s-})dx.\label{eq:harmonic2}
\end{eqnarray}By Lemma~\ref{lem:image}(i), we see that $M\in\M(\Lambda)$ and $\nabla_{x}M=(\nabla_{x}F)_{-}$. By (\ref{eq:harmonic}) and (\ref{eq:harmonic2}), the proof is complete. 
\end{proof}

\section{Technical proofs}\label{sec:append}

\begin{proof}[\bf{Proof of Prop.~\ref{prop:invalid}}]
 For  $\alpha\in\R_{+}$, define $w_{\alpha}(t):=\I_{[\alpha,\infty)}(t)\in D=:\Omega$, where $D$ denotes the Skorokhod space. We assign to the collection $(w_{\alpha})_{\alpha\in\R_{+}}$, a normalized Lebesgue measure \begin{eqnarray*}\mathbb{P}(\{w_{\alpha}|\alpha\in A\}):=\sum_{n\geq 1}\frac{\lambda(A\cap[0,n])}{2^{n+1}},\end{eqnarray*}then $\mathbb{P}(\{w_{\alpha}|\alpha\in\R_{+}\}))=1$ and $X_t(w):=w(t)$ is a finite variation process (i.e. a semi-martingale) under $\mathbb{P}$. 
 Now let $\pi=(\pi_n)_{n\geq 1}$ be any sequence of time partitions and denote \begin{eqnarray*}Q^{\pi}_{0}:=\{x\in Q^{\pi}|J(x)\subset\liminf_{n}\pi_n\}.\label{eq.QV0}\end{eqnarray*} 
 Since $\liminf_{n}\pi_n$ is countable, it follows that $\mathbb{P}(\{w_{\alpha}|\alpha\in\liminf_{n}\pi_n\})=0$ and therefore $\mathbb{P}(\{\omega\in\Omega| X_{\cdot}(\omega)\in Q^{\pi}_{0}\})=0$.
\end{proof}

\begin{proof}[\bf{Proof of Theorems~\ref{thm:formulaX} and \ref{thm:formula}}]
By the right continuity of $F$ (Def.~\ref{def:pi}.2(d)), we have\begin{eqnarray}\label{eq:f-1}
F(T,x_T)-F(0,x_0)=\lim_n\sum_{\pi_n\ni t_i\leq T}F(t_{i+1},x^n_{t_{i+1}-})-F(t_{i},x^n_{t_{i}-}),
\end{eqnarray}where for all $n$ sufficiently large, we can decompose each increments
\begin{alignat*}{2}
&F(t_{i+1},x^n_{t_{i+1}-})-F(t_{i},x^n_{t_{i}-})\\
=&F(t_{i+1},x^n_{t_{i+1}-})-F(t_{i},x^n_{t_{i+1}-})+F(t_{i},x^n_{t_{i+1}-})-F(t_{i},x^n_{t_{i}-})\\
=&\underbrace{\left(F(t_{i+1},x^n_{t_{i}})-F(t_{i},x^n_{t_{i}})\right)}_{\text{time}}+\underbrace{\left(F(t_{i},x^n_{t_{i}})-F(t_{i},x^n_{t_{i}-})\right)}_{\text{space}}\end{alignat*}into the sum of a time ('horizontal') and a space ('vertical') increment. 

	Since $F$ is left-continuous and differentiable in time, $\D F$ is right-continuous and locally bounded, by Lemma~ \ref{lem:ftc} each time increment may be expressed as\begin{alignat*}{2}
F(t_{i+1},x^n_{t_{i}})-F(t_{i},x^n_{t_{i}})=\int_{t_i}^{t_{i+1}}{\D F}(t,x^n_{t_i})dt.\end{alignat*}By Lemma \ref{lem:qs}, we obtain\begin{eqnarray*}
\lim_n\sum_{\pi_n\ni t_i\leq T}F(t_{i+1},x^n_{t_{i}})-F(t_{i},x^n_{t_{i}})=\int_{0}^{T}{\D F}(t,x_t)dt,\end{eqnarray*}which in light of (\ref{eq:f-1}), implies that the sum of  space increments converges to\begin{eqnarray}
\lim_n\sum_{\pi_n\ni t_i\leq T}\underbrace{F(t_{i},x^n_{t_{i}})-F(t_{i},x^n_{t_{i}-})}_{\Delta F(t_{i},x^n_{t_{i}}) }=F(T,x_T)-F(0,x_0)-\int_{0}^{T}{\D F}(t,x_t)dt.\label{eq:f1}
\end{eqnarray}

    If $F\in\X(\Lambda)$ then $\nabla_{x}F$ is strictly causal and by Prop.~\ref{prop:causal}, $\nabla_{x}^{2}F$ is vanishing everywhere. Thus, by a second order Taylor expansion, the remainder term vanishes, so \begin{alignat*}{2}F(t_{i},x^n_{t_{i}})-F(t_{i},x^n_{t_{i}-}) =\nabla_{x}F(t_i,x^n_{t_i-})\cdot\left(x(t_{i+1})-x(t_{i})\right)\end{alignat*}and Thm.~\ref{thm:formulaX} follows. If $F\in C^{1,2}(\Lambda)$ then, by Taylor's Theorem, each space increment admits the following second order expansion\begin{alignat}{2}\Delta F(t_{i},x^n_{t_{i}})&=F\left(t_{i},x^n_{t_{i}-}+\Delta x^n(t_{i})\I_{[t_i,\infty)}\right)-F(t_{i},x^n_{t_{i}-})\nonumber\\
&=\nabla_{x}F(t_i,x^n_{t_i-})\cdot\Delta x^n(t_{i})+\frac{1}{2}\langle\nabla^2_{x}F(t_i,x^n_{t_i-}),\Delta x^n(t_{i})\Delta x^n(t_{i})'\rangle\nonumber\\
&+R^n_{t_i},\label{eq:taylor} \end{alignat}where $\Delta x^n(t_{i})=\left(x(t_{i+1})-x(t_{i})\right)$ and\begin{eqnarray*}
R^n_{t_i}=\frac{1}{2}\langle\nabla^2_{x}F(t_i,x^n_{t_i-}+\alpha^n_i\Delta x^n(t_{i})\I_{[t_i,\infty)})-\nabla^2_{x}F(t_i,x^n_{t_i-}),\Delta x^n(t_{i})\Delta x^n(t_{i})'\rangle\end{eqnarray*}where $\alpha^n_i\in(0,1)$. Since $x\in\Omega_2\subset Q^{\pi}_m$, by Cor.~\ref{cor:qs} and Rem.~\ref{rem:qv}\begin{alignat}{2}
&\lim_n\sum_{\pi_n\ni t_i \leq T}\langle\nabla^2_{x}F(t_i,x^{n}_{t_{i}-}),\Delta x^n(t_{i})\Delta x^n(t_{i})'\rangle=\int_{0}^{T}\nabla^2_{x}F(t,x_{t-})d[x]\nonumber\\
&=\int_{0}^{T}\nabla^2_{x}F(t,x_{t-})d[x]^c+\sum_{t\leq T}\langle\nabla^2_{x}F(t,x_{t-}),\Delta x(t)\Delta x(t)'\rangle.\label{eq:qs}
\end{alignat} 

	Let $\delta>0$, $r:=\sup_{t\in [0,T]}|\Delta x(t)|$, $r_{\delta}:=\delta+\sup_{t\in [0,T+\delta]}|\Delta x(t)|$. Using a result on \cadlag functions \cite[Lemma 8]{CF}, we see that $|\Delta x^n(t_{i})|\leq r_{\delta}$ for  $n$ sufficiently large. By Rem.~\ref{rem:closed}, we see that $\alpha^n_i\Delta x^n(t_{i})\in \U_{t_i-}(x^n)\cap\overline{B}_{r_{\delta}}(0)$. Since $\nabla^2_{x}F$ admits a modulus of vertical continuity, it follows from Def.~\ref{def:space modulus} that there exists a modulus of continuity $\omega$ such that\begin{eqnarray*}|R^n_{t_i}|\leq\frac{1}{2}\omega(r_{\delta})|\Delta x^n(t_{i})\Delta x^n(t_{i})'|\end{eqnarray*}for  $n$ sufficiently large, hence by an application of Cor.~\ref{cor:qs}(i), we obtain\begin{eqnarray*}\limsup_n\sum_{\pi_n\ni t_i\leq T}|R^n_{t_i}|\leq\frac{1}{2}\omega(r_{\delta})\leq\omega(r_{\delta})tr\left([x](T)\right).\end{eqnarray*}Send $\delta\downarrow 0$, and by the right continuity of $x$, we have established that\begin{eqnarray}\limsup_n\sum_{\pi_n\ni t_i\leq T}|R^n_{t_i}|\leq\frac{1}{2}\omega(r+)tr\left([x](T)\right).\label{eq:rb}\end{eqnarray} %which, in reminiscent of (\ref{eq:f1})-(\ref{eq:rb}) and that $\omega(0+)=0$, we remark here that the change of variable formula would have already been proven if $x$ were continuous (i.e. $r=0$). 
	Let $0<\epsilon<r$, define the following finite sets on $[0,T]$\begin{alignat*}{2}J(\epsilon)&:=\{t\leq T| |\Delta x(t)|>\epsilon\},\\J_n(\epsilon)&:=\{\pi_n\ni t_i\leq T| \exists t\in(t_i,t_{i+1}], |\Delta x(t)|>\epsilon\}.\end{alignat*}We can decompose\begin{alignat}{2}
\sum_{\pi_n\ni t_i\leq T}R^n_{t_i}=\sum_{t_i\in J_n(\epsilon)}R^n_{t_i}+\sum_{t_i\in \left(J_n(\epsilon)\right)^{c}}R^n_{t_i}.\label{eq:rd}\end{alignat}into two partial sums. By (\ref{eq:taylor}), the right continuity (resp. left-continuity) of $F$ (resp. $(\nabla_{x}F)_{-}$,$(\nabla^2_{x}F)_{-}$) and that $x$ is \cadlag we obtain\begin{alignat}{2}
\sum_{t_i\in J_n(\epsilon)}\left(R^n_{t_i}\right)^{\pm}\stackrel{n}{\longrightarrow}&\sum_{t\in J(\epsilon)}\left(\vphantom{\frac{1}{2}}\Delta F(t,x_t)-\nabla_{x}F(t,x_{t-})\cdot\Delta x(t)\right.\nonumber\\
&\left.-\frac{1}{2}\langle\nabla^2_{x}F(t,x_{t-}),\Delta x(t)\Delta x(t)'\rangle\vphantom{\frac{1}{2}}\right)^{\pm}\nonumber\\
&\leq\frac{1}{2}\omega(r+)tr\left([x](T)\right),\label{eq:rd1}
\end{alignat}where the inequality follows from (\ref{eq:rb}) and (\ref{eq:rd}). Observe that $J(\epsilon)\uparrow J(0)$ as $\epsilon\downarrow 0$, by monotone convergence, we obtain\begin{alignat}{2}
\lim_n\sum_{t_i\in J_n(\epsilon)}\left(R^n_{t_i}\right)^{\pm}\stackrel{\epsilon}{\longrightarrow}&\sum_{t\leq T}\left(\vphantom{\frac{1}{2}}\Delta F(t,x_t)-\nabla_{x} F(t,x_{t-})\cdot\Delta x(t)\right.\nonumber\\
&\left.-\frac{1}{2}\langle\nabla^2_{x}F(t,x_{t-}),\Delta x(t)\Delta x(t)'\rangle\vphantom{\frac{1}{2}}\right)^{\pm}\nonumber\\
&\leq\frac{1}{2}\omega(r+)tr\left([x](T)\right).\label{eq:rd2}
\end{alignat}

	On the other hand, since $w$ is monotonic, by (\ref{eq:rb}) and (\ref{eq:rd}), it follows that\begin{eqnarray}
\left|\limsup_n\sum_{t_i\in \left(J_n(\epsilon)\right)^{c}}R^n_{t_i}-\liminf_n\sum_{t_i\in \left(J_n(\epsilon)\right)^{c}}R^n_{t_i}\right|\leq\omega(\epsilon)tr\left([x](T)\right),\label{eq:re}
\end{eqnarray}and by (\ref{eq:f1})-(\ref{eq:qs}), (\ref{eq:rd}),(\ref{eq:rd1}) and (\ref{eq:re}), so is\begin{eqnarray*}
\left|\limsup_n\sum_{\pi_n\ni t_i\leq T}\nabla_{x}{F}^n_{t_i}\cdot\Delta x^n(t_{i})-\liminf_n\sum_{\pi_n\ni t_i\leq T}\nabla_{x}{F}^n_{t_i}\cdot\Delta x^n(t_{i})\right|\\
\leq\omega(\epsilon)tr\left([x](T)\right),
\end{eqnarray*}where we have denoted $\nabla_{x}{F}^n_{t_i}:=\nabla_{x}{F}(t_i,x^n_{t_i-})$. Send $\epsilon\downarrow 0$, we obtain \begin{eqnarray}
\int_{0}^{T}\nabla_{x}{F}(t,x_{t-})dx:=\lim_{n}\sum_{\pi_n\ni t_i\leq T}\nabla_{x}{F}(t_i,x^n_{t_i-})\cdot(x(t_{i+1})-x(t_i)).\label{eq:int}\end{eqnarray}
	
	Upon a second look at (\ref{eq:f1})-(\ref{eq:qs}), (\ref{eq:rd}),(\ref{eq:rd1}) and in light of (\ref{eq:int}), we immediately see that \begin{eqnarray*}\lim_n\sum_{t_i\in \left(J_n(\epsilon)\right)^{c}}R^n_{t_i}=:o(\epsilon)\end{eqnarray*} also exists and by (\ref{eq:rb}), $|o(\epsilon)|\leq\frac{1}{2}\omega(\epsilon)tr\left([x](T)\right)\stackrel{\epsilon}{\longrightarrow}0$ which, combined with (\ref{eq:rd}) and (\ref{eq:rd2}) implies\begin{alignat}{2}
\lim_n\sum_{\pi_n\ni t_i\leq T}R^n_{t_i}&=\sum_{t\leq T}\left(\vphantom{\frac{1}{2}}\Delta F(t,x_t)-\nabla_{x}F(t,x_{t-})\cdot\Delta x(t)\right.\nonumber\\
&\left.-\frac{1}{2}\langle\nabla^2_{x}F(t,x_{t-}),\Delta x(t)\Delta x(t)'\rangle\vphantom{\frac{1}{2}}\right).\label{eq:rd3}
\end{alignat}

	In view of (\ref{eq:f1})-(\ref{eq:qs}), (\ref{eq:int}) and (\ref{eq:rd3}), it remains to show that \begin{alignat}{2}
	&\sum_{t\leq T}\left(\Delta F(t,x_t)-\nabla_{x}F(t,x_{t-})\Delta x(t)-\frac{1}{2}\langle\nabla^2_{x}F(t,x_{t-}),\Delta x(t)\Delta x(t)'\rangle\right)\nonumber\\
=&\sum_{t\leq T}\left(\vphantom{\frac{1}{2}}\Delta F(t,x_t)-\nabla_{x}F(t,x_{t-})\Delta x(t)\vphantom{\frac{1}{2}}\right)-\frac{1}{2}\sum_{t\leq T}\langle\nabla^2_{x}F(t,x_{t-}),\Delta x(t)\Delta x(t)'\rangle,\label{eq:abs}
\end{alignat}and the absolute convergence of the series.
	Since $(\nabla^2_{x}F)_{-}$ is left-continuous and locally bounded, we see from Lemma~\ref{lem:bound}(ii) that the map $t\longmapsto \nabla^2_{x}F(t,x_{t-})$ is also bounded on $[0,T]$, hence by (\ref{eq:qv})\begin{alignat*}{2}
\frac{1}{2}\sum_{t\leq T}|\nabla^2_{x}F(t,x_{t-})||\Delta x(t)\Delta x(t)'|&\leq\text{const}\sum_{i}\left(\sum_{t\leq T}(\Delta x_i(t))^2\right)\\
&\leq\text{const}\cdot tr\left([x](T)\right),
\end{alignat*} which, combined with (\ref{eq:rd2}) implies \eqref{eq:abs} and  the absolute convergence of the series, hence Theorem~\ref{thm:formula} is proven.
\end{proof}

\end{document}